\newtheorem{dfn}{Definition}[section]
\newtheorem{thm}[dfn]{Theorem}
\newtheorem{prp}[dfn]{Proposition}
\newtheorem{rmk}[dfn]{Remark}
\newtheorem{lem}[dfn]{Lemma}
\newtheorem{cor}[dfn]{Corollary}
\newtheorem{exe}{Exercise}
\newcommand{\T}{{\mathbb T}}
\newcommand{\Z}{{\mathbb Z}}
\newcommand{\R}{{\mathbb R}}
\newcommand{\N}{{\mathbb N}}
\newcommand{\C}{{\mathbb C}}
\newcommand{\op}[1]{\operatorname{#1}}
\newcommand{\di}{{\mathrm{d}}}
\begin{document}

\title[On closed orbits for twisted autonomous Tonelli Lagrangian flows]{Lecture notes on closed orbits \\ for twisted autonomous Tonelli Lagrangian flows\\}

\author{Gabriele Benedetti}
\address{WWU M\"unster, Mathematisches Institut, Einsteinstrasse 62, D-48149 M\"unster, Germany}
\email{\href{mailto:benedett@uni-muenster.de}{benedett@uni-muenster.de}}

\date{\today}

\begin{abstract}
These notes were prepared in occasion of a mini-course given by the author at the ``CIMPA Research School - Hamiltonian and Lagrangian Dynamics" (10--19 March 2015 - Salto, Uruguay). The talks were meant as an introduction to the problem of finding periodic orbits of prescribed energy for autonomous Tonelli Lagrangian systems on the twisted cotangent bundle of a closed manifold. In the first part of the lecture notes, we put together in a general theorem old and new results on the subject. In the second part, we focus on an important class of examples: magnetic flows on surfaces. For such systems, we discuss a special method, originally due to Ta\u\i manov, to find periodic orbits with low energy and we study in detail the stability properties of the energy levels.\end{abstract}

\maketitle

\tableofcontents
\section{Introduction}
The study of invariant sets plays a crucial role in the understanding of the properties of a dynamical system: it can be used to obtain information on the dynamics both at a local scale, for example to determine the existence of nearby stable motions, and at a global one, for example to detect the presence of chaos. In this regard we refer the reader to the monograph \cite{Mos73}. In the realm of continuous flows periodic orbits are the simplest example of invariant sets and, therefore, they represent the first object of study. For systems admitting a Lagrangian formulation closed orbits received special consideration in the past years, in particular for the cases having geometrical or physical significance, such as geodesic flows \cite{Kli78} or mechanical flows in phase space \cite{Koz85}. In \cite{Con06} Contreras formulated a very general theorem about the existence of periodic motions for autonomous Lagrangian systems over compact configuration spaces. Later on, this result was analysed in detail by Abbondandolo, who discussed it in a series of lecture notes \cite{Abb13}. It is the purpose of the present work to present a generalization of such a theorem, based on the recent papers \cite{Mer10,AB15b}, to systems which admit only a \textit{local} Lagrangian description (Theorem \ref{thm:main} below). Among these systems we find the important example of \textit{magnetic flows on surfaces}, which we introduce in Section \ref{sub:hom}. We look at them in detail in the last part of the notes: first, we will sketch a method, devised by Ta\u\i manov in \cite{Tai93}, to find periodic orbits with low energy; second, we will study the \textit{stability} of the energy levels, a purely symplectic property, which has important consequences for the existence of periodic orbits.

Let us start now our study by making precise the general setting in which we work. 

\subsection{Twisted Lagrangian flows over closed manifolds}\label{sub:twi}
Let $M$ be a closed connected $n$-dimensional manifold and denote by
\begin{equation*}
\begin{aligned}
\pi:TM&\ \longrightarrow\ M\\
(q,v)&\ \longmapsto\ q
\end{aligned}\quad\quad\quad\quad
\begin{aligned}
\pi:T^*M&\ \longrightarrow\ M\\
(q,p)&\ \longmapsto\ q
\end{aligned}
\end{equation*}
the tangent and the cotangent bundle projection of $M$. Let us fix also an auxiliary Riemannian metric $g$ on $M$ and let $|\cdot|$ denote the associated norm.

Let $\sigma\in\Omega^2(M)$ be a closed $2$-form on $M$ which we refer to as the \textit{magnetic form}. We call \textit{twisted cotangent bundle} the symplectic manifold $(T^*M,\omega_\sigma)$, where $\omega_\sigma:=d\lambda-\pi^*\sigma$. Here $\lambda$ is the canonical $1$-form defined by
\begin{equation*}
\lambda_{(q,p)}\ =\ p\circ d_{(q,p)}\pi\,, \quad\quad \forall\, (q,p)\in T^*M\,.
\end{equation*}
If $K:T^*M\rightarrow\R$ is a smooth function, we denote by $t\mapsto \Phi^{(K,\sigma)}_t$ the Hamiltonian flow of $K$. It is generated by the vector field $X_{(K,\sigma)}$ defined by
\begin{equation*}
\omega_\sigma(X_{(K,\sigma)},\,\cdot\,)\ =\ -dK\,.
\end{equation*}
In local coordinates on $T^*M$ such flow is obtained by integrating the equations
\begin{equation}
\left\{\begin{aligned}
       \dot q&=\ \frac{\partial K}{\partial p}\,,\\
       \dot p&=\ -\frac{\partial K}{\partial q}\ +\ \sigma\left(\frac{\partial K}{\partial p},\,\cdot\,\right)\,.
       \end{aligned}
\right.
\end{equation}
The function $K$ is an integral of motion for $\Phi^{(K,\sigma)}$. Moreover, if $k$ is a regular value for $K$, then the flow lines lying on $\{K=k\}$ are tangent to the $1$-dimensional distribution $\ker\omega_\sigma|_{\{K=k\}}$. This means that if $K':T^*M\rightarrow\R$ is another Hamiltonian with a regular value $k'$ such that $\{K'=k'\}=\{K=k\}$, then $\Phi^{(K',\sigma)}$ and $\Phi^{(K,\sigma)}$ are the same up to a \textit{time reparametrization} on the common hypersurface. In other words, there exists a smooth family of diffeomorphisms $\tau_{z}:\R\rightarrow\R$ parametrized by $z\in\{K'=k'\}=\{K=k\}$ such that
 \begin{equation*}
\tau_{z}(0)\ =\ 0\ \quad\mbox{and}\quad\ \Phi^{(K,\sigma)}_t(z)\ =\ \Phi^{(K',\sigma)}_{\tau_{z}(t)}(z)\,.
 \end{equation*}
Hence, there is a bijection between the closed orbits of the two flows on the hypersurface.

Let $L:TM\rightarrow\R$ be a \textit{Tonelli Lagrangian}. This means that for every $q\in M$, the restriction $L|_{T_qM}$ is superlinear and strictly convex (see \cite{Abb13}):
\begin{equation}\label{eq:ton}
\begin{aligned}
\lim_{|v|\rightarrow+\infty}\frac{L(q,v)}{|v|}\ &=\ +\infty\,,\quad\forall\, q\in M\,,\\
\frac{\partial^2L}{\partial v^2}(q,v)\ &>\ 0\,,\quad\forall\, (q,v)\in TM\,,
\end{aligned}
\end{equation}
where $\frac{\partial^2L}{\partial v^2}(q,v)$ is the Hessian of $L|_{T_qM}$ at $v\in T_qM$. The \textit{Legendre transform} associated to $L$ is the fibrewise diffeomorphism
\begin{align*}
\mathcal L:TM&\ \longrightarrow\ T^*M\\
(q,v)&\ \longmapsto\ \frac{\partial L}{\partial v}(q,v)\,.
\end{align*}
The \textit{Legendre dual} of $L$ is the \textit{Tonelli Hamiltonian}
\begin{align*}
H:T^*M&\ \longrightarrow\ \R\\
(q,p)&\ \longmapsto\ p\Big(\mathcal L^{-1}(q,p)\Big)-L\big(\mathcal L^{-1}(q,p)\big)\,,
\end{align*}
which satisfies the analogue of \eqref{eq:ton} on $T^*M$. For every $k\in\R$, let $\Sigma^*_k:=\{H=k\}$. These sets are compact and invariant for $\Phi^{(H,\sigma)}$. As a consequence such a flow is complete.
We can use $\mathcal L$ to pull back to $TM$ the Hamiltonian flow of $H$.

\begin{dfn}
Let $\Phi^{(L,\sigma)}$ be the flow on $TM$ defined by conjugation
\begin{equation*}
\mathcal L\circ\Phi^{(L,\sigma)}\ =\ \Phi^{(H,\sigma)}\circ\mathcal L\,.
\end{equation*}
We call $\Phi^{(L,\sigma)}$ a $\textsf{twisted\ Lagrangian\ flow}$ and we write $X_{(L,\sigma)}$ for its generating vector field. Since $\Phi^{(H,\sigma)}$ is complete, $\Phi^{(L,\sigma)}$ is complete as well.
\end{dfn}
The next proposition shows that the flow $\Phi^{(L,\sigma)}$ is locally a standard Lagrangian flow.
\begin{prp}
Let $U\subset M$ be an open set such that $\sigma|_U=d\theta$ for some $\theta\in\Omega^1(U)$. There holds
\begin{equation*}
X_{(L-\theta,0)}\ =\ X_{(L,\sigma)}|_U\,,
\end{equation*}
where $L-\theta:TU\rightarrow\R$ is the Tonelli Lagrangian defined by $(L-\theta)(q,v)=L(q,v)-\theta_q(v)$ and $X_{(L-\theta,0)}$ is the standard Lagrange vector field of $L-\theta$.
\end{prp}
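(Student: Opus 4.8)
The plan is to derive the identity as a formal consequence of the definitions, by noting that fibrewise translation by $\theta$ conjugates the canonical symplectic structure on $T^*U$ to the twisted one.

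I would first introduce the fibrewise translation $\tau_\theta\colon T^*U\to T^*U$, $(q,p)\mapsto(q,p+\theta_q)$, and record the classical identity $\tau_\theta^*\lambda=\lambda+\pi^*\theta$, which follows immediately from the definition $\lambda_{(q,p)}=p\circ d_{(q,p)}\pi$ together with $\pi\circ\tau_\theta=\pi$. Differentiating and using $d\theta=\sigma|_U$ gives
\[
\tau_\theta^*\omega_\sigma=\tau_\theta^*(d\lambda-\pi^*\sigma)=d\lambda+\pi^*d\theta-\pi^*\sigma=d\lambda,
\]
so $\tau_\theta$ is a symplectomorphism from $(T^*U,d\lambda)$ onto $(T^*U,\omega_\sigma|_{T^*U})$. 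Since symplectomorphisms carry Hamiltonian vector fields to Hamiltonian vector fields, for every smooth $G\colon T^*U\to\R$ one gets $(\tau_\theta)_*X_{(G\circ\tau_\theta,\,0)}=X_{(G,\sigma)}|_{T^*U}$, where $X_{(\cdot,0)}$ denotes the Hamiltonian vector field for $d\lambda$; here the only thing to watch is consistent use of the sign convention $\omega(X,\cdot)=-dK$.

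Next I would compare the Legendre data of $L-\theta$ and $L$. Because $\theta_q$ is linear on $T_qM$, its fibre derivative is $\theta_q$ itself, hence $\mathcal L_{L-\theta}(q,v)=\mathcal L(q,v)-\theta_q$, i.e. $\tau_\theta\circ\mathcal L_{L-\theta}=\mathcal L$ on $TU$; and a one-line Fenchel computation shows that the Tonelli Hamiltonian of $L-\theta$ is $H_{L-\theta}=H\circ\tau_\theta$ on $T^*U$. Chaining this with the defining conjugations $\mathcal L\circ\Phi^{(L,\sigma)}=\Phi^{(H,\sigma)}\circ\mathcal L$ and $\mathcal L_{L-\theta}\circ\Phi^{(L-\theta,0)}=\Phi^{(H_{L-\theta},0)}\circ\mathcal L_{L-\theta}$ yields
\[
(\mathcal L)_*X_{(L-\theta,0)}=(\tau_\theta)_*(\mathcal L_{L-\theta})_*X_{(L-\theta,0)}=(\tau_\theta)_*X_{(H_{L-\theta},0)}=X_{(H,\sigma)}|_{T^*U}=(\mathcal L)_*X_{(L,\sigma)}|_U .
\]
As $\mathcal L$ restricts to a diffeomorphism $TU\to T^*U$, cancelling $(\mathcal L)_*$ gives $X_{(L-\theta,0)}=X_{(L,\sigma)}|_U$.

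I do not expect a genuine obstacle here: the statement is essentially bookkeeping. The one step that actually uses geometric structure is the translation identity $\tau_\theta^*\lambda=\lambda+\pi^*\theta$, and the only care needed is that every object ($\tau_\theta$, $\mathcal L_{L-\theta}$, $H_{L-\theta}$, the flows) is correctly restricted to $T^*U$ or $TU$ and that the Hamiltonian sign convention is used uniformly. If one prefers a hands-on verification, it can be carried out in local coordinates: writing $\theta=\theta_i\,dq^i$, the Euler--Lagrange equations of $L-\theta$ read $\frac{d}{dt}\frac{\partial L}{\partial v^j}=\frac{\partial L}{\partial q^j}+\sigma(\dot q,\partial_{q^j})$, which is precisely the Legendre pull-back of the displayed twisted Hamilton equations for $K=H$.
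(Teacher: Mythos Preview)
Your argument is correct. The key symplectic identity $\tau_\theta^*\omega_\sigma=d\lambda$ and the Legendre relations $\tau_\theta\circ\mathcal L_{L-\theta}=\mathcal L$, $H_{L-\theta}=H\circ\tau_\theta$ are all verified as you say, and the chain of push-forwards goes through cleanly.

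Your route, however, is genuinely different from the paper's. The paper deduces the proposition from the generalized Euler--Lagrange equations (the exercise immediately following the statement): one shows that a curve $(\gamma,\dot\gamma)$ is a flow line of $X_{(L,\sigma)}$ if and only if $\nabla_{\dot\gamma}\frac{\partial L}{\partial v}=\frac{\partial L}{\partial q}+\sigma_\gamma(\dot\gamma,\cdot)$, and then checks directly that the standard Euler--Lagrange equations for $L-\theta$ produce exactly the same ODE once $d\theta=\sigma$ is substituted. This is essentially the local-coordinate alternative you mention at the end of your proposal. Your approach instead stays on the Hamiltonian side and exploits the fibrewise translation $\tau_\theta$ as a symplectomorphism $(T^*U,d\lambda)\to(T^*U,\omega_\sigma)$, which is more structural: it makes transparent that the twisted and untwisted pictures are \emph{globally conjugate} over $U$, not merely that their flow lines coincide. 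The paper's approach is quicker and requires no symplectic bookkeeping, but yours explains \emph{why} the identity holds and would adapt immediately to statements about other Hamiltonian-theoretic objects (e.g.\ comparing the two Liouville forms or the two action functionals).
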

\noindent The proof of this result follows from the next exercise.
\begin{exe}\label{exe-EL}
Prove the following generalization of the Euler-Lagrange equations. Consider a smooth curve $\gamma:[0,T]\rightarrow M$. Then, the curve $(\gamma,\dot\gamma)$ is a flow line of $X_{(L,\sigma)}$ if and only if for every open set $W\subset M$ and every linear symmetric connection $\nabla$ on $W$,
\begin{equation}
\left(\nabla_{\dot\gamma}\frac{\partial L}{\partial v}\right)(\gamma,\dot\gamma)\ =\ \frac{\partial L}{\partial q}(\gamma,\dot\gamma)\ +\ \sigma_{\gamma}(\dot\gamma,\cdot)
\end{equation}
at every time $t\in[0,T]$ such that $\gamma(t)\in W$. In the above formula $\frac{\partial L}{\partial q}\in T^*M$ denotes the restriction of the differential of $L$ to the horizontal distribution given by $\nabla$.
\end{exe}
\bigskip

\subsection{The magnetic form}
Let $[\sigma]\in H^2(M;\R)$ denote the cohomology class of $\sigma$. We observe that for any $\theta\in\Omega^1(M)$, there holds
\begin{equation*}
X_{(L+\theta,\sigma+d\theta)}=X_{(L,\sigma)}\,.
\end{equation*}
Since $L+\theta$ is still a Tonelli Lagrangian, we expect that general properties of the dynamics depend on $\sigma$ only via $[\sigma]$. Moreover, if $\theta\in\Omega^1(M)$ is defined by $\theta_q:=-\frac{\partial L}{\partial v}(q,0)$, then
\begin{equation*}
\min_{v\in T_qM}\Big(L(q,v)\ +\ \theta_q(v)\Big)\ =\ L(q,0)\ +\ \theta_q(0)\,,\quad\forall\,q\in M\,.
\end{equation*}
Therefore, without loss of generality we assume from now on that $L|_{T_qM}$ attains its minimum at $(q,0)$, for every $q\in M$.

We can refine the classification of $\sigma$ given by $[\sigma]$ by looking at the cohomological properties of its lift to the universal cover. Let $\widetilde \sigma$ be the pull-back of $\sigma$ to the universal cover $\widetilde M\rightarrow M$. We say that $\sigma$ is \textit{weakly exact} if $[\widetilde\sigma]=0$. This is equivalent to asking that
\begin{equation*}
\int_{S^2}u^*\sigma\ =\ 0\,,\quad\forall\,u:S^2\longrightarrow M\,.
\end{equation*}
We say that $\sigma$ admits a \textit{bounded weak primitive} if there is $\widetilde\theta\in\Omega^1(\widetilde M)$ such that $d\widetilde\theta=\widetilde\sigma$ and
\begin{equation*}
\sup_{\widetilde q\in\widetilde M}|\widetilde\theta_{\widetilde q}|\ <\ +\infty\,.
\end{equation*}
In this case we write $[\widetilde\sigma]_b=0$. Notice that both notions that we just introduced depend on $\sigma$ only via $[\sigma]$.
\begin{exe}\label{ex:sup}
If $M$ is a surface and $[\sigma]\neq0$, show that
\begin{itemize}
 \item if $M=S^2$, then $[\widetilde\sigma]\neq0$;
 \item if $M=\T^2$, then $[\widetilde\sigma]=0$, but $[\widetilde\sigma]_b\neq0$;
 \item if $M\notin\{S^2,\T^2\}$, then $[\widetilde\sigma]_b=0$.
\end{itemize}
Using the second point, prove that
\begin{itemize}
 \item if $M=\T^n$ and $[\sigma]\neq0$, then $[\widetilde\sigma]=0$, but $[\widetilde\sigma]_b\neq0$;
 \item if $M$ is any manifold and $[\widetilde\sigma]_b=0$, then
 \begin{equation*}
\int_{\T^2}u^*\sigma\ =\ 0\,,\quad\forall\,u:\T^2\longrightarrow M\,.
\end{equation*}
\end{itemize}
\end{exe}
\bigskip

\subsection{Energy}
As twisted Lagrangian flows are described by an autonomous Hamiltonian on the twisted cotangent bundle, they possess a natural first integral. It is the Tonelli function $E:TM\rightarrow\R$ given by $E:=H\circ\mathcal L$. We call it the \textit{energy} of the system and we write $\Sigma_k:=\{E=k\}$, for every $k\in\R$. Let $V:M\rightarrow\R$ denote the restriction of $E$ to the zero section and let $e_m(L)$ and $e_0(L)$ denote the minimum and maximum of $V$, respectively.

\begin{prp}
The energy can be written as
  \begin{equation*}
   E(q,v)\ =\ \frac{\partial L}{\partial v}(q,v)(v)\ -\ L(q,v)
  \end{equation*}
and, for every $q\in M$, we have
 \begin{equation*}
  \min_{v\in T_qM}E(q,v)\ =\ E(q,0)\ =\ V(q)\ =\ -L(q,0)\,.
 \end{equation*}
Moreover,
\begin{itemize}
 \item $k>e_0(L)$ if and only if $\pi:\Sigma_k\rightarrow M$ is an $S^{n-1}$-bundle (isomorphic to the unit tangent bundle of $M$).
 \item $k<e_m(L)$ if and only if $\Sigma_k=\emptyset$.
 \end{itemize}
\end{prp}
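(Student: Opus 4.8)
The plan is to unpack the definition $E = H \circ \mathcal{L}$ and the definition of the Legendre transform to get the formula, then analyse the fibrewise function $v \mapsto E(q,v)$ using strict convexity of $L$.

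First I would prove the formula. By definition, $H(q,p) = p(\mathcal{L}^{-1}(q,p)) - L(\mathcal{L}^{-1}(q,p))$. Composing with $\mathcal{L}$ and writing $p = \mathcal{L}(q,v) = \frac{\partial L}{\partial v}(q,v)$ gives directly
\begin{equation*}
E(q,v) \ =\ H(\mathcal{L}(q,v))\ =\ \frac{\partial L}{\partial v}(q,v)(v)\ -\ L(q,v)\,.
\end{equation*}
Next, for fixed $q$, consider $f(v):=E(q,v)$ on the vector space $T_qM$. A computation of its differential in $v$ gives $d_vf = \frac{\partial^2 L}{\partial v^2}(q,v)(v,\cdot)$ (the two terms $\frac{\partial^2 L}{\partial v^2}(q,v)(v,\cdot)$ and $-\frac{\partial L}{\partial v}(q,v)$ coming from Leibniz partially cancel against $-d_vL$). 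Since $\frac{\partial^2 L}{\partial v^2}(q,v)$ is positive definite by the Tonelli condition \eqref{eq:ton}, this vanishes precisely when $v=0$, so $f$ has a unique critical point at $v=0$. Because we have normalized $L$ so that $L|_{T_qM}$ attains its minimum at $(q,0)$, superlinearity forces $f$ to be proper and bounded below, hence $v=0$ is the global minimum; and $f(0) = -L(q,0) = V(q)$ by definition of $V$. This settles the first displayed equation.

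For the two bulleted equivalences I would argue as follows. For the first: if $k > e_0(L) = \max_q V(q)$, then for every $q$ the minimum value $V(q)$ of $f$ is strictly less than $k$, and since $f$ is proper and (by the Hessian computation) has no other critical points, $\{v : E(q,v) = k\}$ is a smooth hypersurface in $T_qM$ diffeomorphic to $S^{n-1}$, with smooth dependence on $q$; the diffeomorphism $v \mapsto v/|v|_g$ (along the appropriate rescaling) identifies $\Sigma_k \to M$ with the unit tangent bundle. Conversely, if $k \le e_0(L)$, pick $q_0$ with $V(q_0) = e_0(L) \ge k$; then the fibre of $\Sigma_k$ over $q_0$ is either empty (if $k < V(q_0)$) or a single point (if $k = V(q_0)$), so $\pi$ is not an $S^{n-1}$-bundle. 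For the second bullet: $\Sigma_k = \emptyset$ iff $E(q,v) \neq k$ for all $(q,v)$, iff (using $\min_v E(q,v) = V(q)$ and properness upward) $k < \min_q V(q) = e_m(L)$.

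The main obstacle is the first bullet: establishing that $\pi : \Sigma_k \to M$ is genuinely a fibre bundle with fibre $S^{n-1}$ (and not merely that each fibre is a sphere) and identifying it with the unit tangent bundle. This requires a clean local-triviality argument — e.g. showing that for $k > e_0(L)$ the radial map $T_qM \setminus \{0\} \ni v \mapsto (q, v/\rho_q(v)) \in \Sigma_k$, where $\rho_q(v)$ is the unique positive scalar with $E(q, \rho_q(v)\,v/|v|_g) = k$, is well-defined (uses that $t \mapsto E(q,tv)$ is strictly increasing on $t>0$, which follows from the Hessian computation) and smooth in all variables — this is where the Tonelli hypotheses \eqref{eq:ton} do the real work, and the rest is routine.
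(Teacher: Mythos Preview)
The paper states this proposition without proof (it is a standard fact in Tonelli Lagrangian theory, and the lecture-notes format omits the argument), so there is nothing to compare against; your proposal is correct and is the expected proof. One minor remark: the normalization that $L|_{T_qM}$ attains its minimum at $v=0$ is not actually needed for the claim $\min_v E(q,v)=E(q,0)$ --- your own computation $d_vf=\tfrac{\partial^2 L}{\partial v^2}(q,v)(v,\cdot)$ already pins the unique critical point at $v=0$, and properness of $f$ follows from the Tonelli property of $H$ (hence of $E$) independently of how $L$ was normalized --- so you can drop that clause.
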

\begin{exe}
If $q_0\in M$ is a critical point of $V$, then $(q_0,0)$ is a constant periodic orbit of $\Phi^{(L,\sigma)}$ with energy $V(q_0)$.
\end{exe}
\bigskip

\subsection{The Ma\~n\'e critical value of the universal cover}
When $\sigma$ is weakly exact we define the \textit{Ma\~{n}\'{e} critical value} of the universal cover as
\begin{equation}\label{mandef}
c(L,\sigma)\ :=\ \inf_{d\widetilde\theta\,=\,\widetilde\sigma}\left(\,\sup_{\widetilde q\in\widetilde M}\widetilde H(\widetilde q,\widetilde\theta_{\widetilde q})\,\right)\ \in\ \R\cup\{+\infty\}\,,
\end{equation}
where $\widetilde H:T^*\widetilde M\rightarrow\R$ is the lift of $H$ to $\widetilde M$. This number plays an important role, since as it will be apparent from Theorem \ref{thm:main} and the examples in Section \ref{sub:hom} the dynamics on $\Sigma_k$ changes dramatically when $k$ crosses $c(L,\sigma)$.

\begin{prp}\label{prp-man}
If $\sigma$ is weakly exact, then
\begin{itemize}
 \item $c(L,\sigma)<+\infty$ if and only if $[\widetilde\sigma]_b=0$;
 \item $c(L,\sigma)\geq e_0(L)$;
 \item if $\sigma=d\theta_0$, where $\theta_0(\cdot)=\mathcal L(\cdot,0)$, then $c(L,\sigma)=e_0(L)$ and the converse is true, provided $e_0(L)=e_m(L)$;
 \item given two Tonelli Lagrangians $L_1$ and $L_2$ and two real numbers $k_1$ and $k_2$ such that $\{H_1=k_1\}=\{H_2=k_2\}$, then
 \begin{equation*}
  c(L_1,\sigma)\geq k_1\ \Longleftrightarrow\ c(L_2,\sigma)\geq k_2\ \ \mbox{and}\ \ c(L_1,\sigma)\leq k_1\ \Longleftrightarrow\ c(L_2,\sigma)\leq k_2\,.
 \end{equation*}
\end{itemize}
\end{prp}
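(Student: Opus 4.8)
The plan is to handle the four items in order, relying throughout on two standard consequences of the Tonelli hypotheses and the compactness of $M$: the Fenchel inequality $\widetilde H(\widetilde q,\widetilde p)\ge\widetilde p(\widetilde v)-\widetilde L(\widetilde q,\widetilde v)$ on $T\widetilde M$, and the \emph{uniform} superlinearity of $H$, namely that for every $R>0$ there is $C_R$ with $H(q,p)\ge R|p|-C_R$ for all $(q,p)\in T^*M$ (see \cite{Abb13}); both hold for the lift $\widetilde H$ computed with the pulled-back metric, since $\widetilde H$ is a fibrewise isometric pull-back of $H$. I will also use that, by Fenchel with $\widetilde v=0$, one has $\widetilde H(\widetilde q,\widetilde p)\ge-\widetilde L(\widetilde q,0)=\widetilde V(\widetilde q)$ for every $\widetilde p$, with equality exactly at $\widetilde p=(\widetilde\theta_0)_{\widetilde q}$, where $\theta_0(\cdot):=\mathcal L(\cdot,0)$.

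For the first item, if $[\widetilde\sigma]_b=0$ I would pick a primitive $\widetilde\theta$ of $\widetilde\sigma$ with $\sup_{\widetilde M}|\widetilde\theta|=:A<+\infty$ and bound $\sup_{\widetilde q}\widetilde H(\widetilde q,\widetilde\theta_{\widetilde q})\le\max\{H(q,p):|p|\le A\}<+\infty$ by compactness of $M$, so that $c(L,\sigma)<+\infty$. Conversely, if some admissible $\widetilde\theta$ has $\sup_{\widetilde q}\widetilde H(\widetilde q,\widetilde\theta_{\widetilde q})=A<+\infty$, then fixing $R>0$ and applying uniform superlinearity gives $R|\widetilde\theta_{\widetilde q}|-C_R\le A$ for all $\widetilde q$, so $\widetilde\theta$ is bounded and $[\widetilde\sigma]_b=0$. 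The second item is then immediate: for every admissible $\widetilde\theta$ one has $\sup_{\widetilde q}\widetilde H(\widetilde q,\widetilde\theta_{\widetilde q})\ge\sup_{\widetilde q}\widetilde V(\widetilde q)=\max_M V=e_0(L)$, and passing to the infimum gives $c(L,\sigma)\ge e_0(L)$.

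For the third item, the direct implication is easy: if $\sigma=d\theta_0$ then $\widetilde\theta_0$ is admissible and realises $\sup_{\widetilde q}\widetilde H(\widetilde q,(\widetilde\theta_0)_{\widetilde q})=\sup_{\widetilde q}\widetilde V(\widetilde q)=e_0(L)$, so $c(L,\sigma)\le e_0(L)$ and equality follows from the second item. For the converse I would proceed as follows. Assuming $e_0(L)=e_m(L)$, so $V\equiv e_0(L)$ and $H(q,p)>e_0(L)$ whenever $p\ne\theta_0(q)$, I would combine the positive-definiteness of the fibrewise Hessian at the minimum, strict fibrewise convexity and compactness of $M$ to produce a uniform modulus: a continuous strictly increasing $\psi$ with $\psi(0)=0$ and $\widetilde H(\widetilde q,\widetilde p)\ge e_0(L)+\psi(|\widetilde p-(\widetilde\theta_0)_{\widetilde q}|)$ for all $\widetilde q,\widetilde p$. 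Then, choosing admissible $\widetilde\theta_n$ with $\sup_{\widetilde q}\widetilde H(\widetilde q,(\widetilde\theta_n)_{\widetilde q})\to e_0(L)=c(L,\sigma)$, the modulus estimate forces $\sup_{\widetilde q}|(\widetilde\theta_n)_{\widetilde q}-(\widetilde\theta_0)_{\widetilde q}|\to 0$, i.e.\ $\widetilde\theta_n\to\widetilde\theta_0$ uniformly on $\widetilde M$; since $d\widetilde\theta_n=\widetilde\sigma$ for all $n$, pairing with compactly supported forms and using Stokes' theorem yields $\widetilde\sigma=d\widetilde\theta_0$ as currents, hence as smooth forms, and since $\widetilde\theta_0$ is the lift of $\theta_0$ and the covering map is a surjective local diffeomorphism this descends to $\sigma=d\theta_0$.

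For the fourth item, I would first dispose of the degenerate case $\Sigma^*:=\{H_1=k_1\}=\{H_2=k_2\}=\emptyset$, in which $k_i<e_m(L_i)$ for both $i$, so $c(L_i,\sigma)\ge e_m(L_i)>k_i$ by the second item and all four equivalences hold trivially. Otherwise, using that a fibrewise strictly convex superlinear level set determines its own sublevel set, the identity $\{H_1=k_1\}=\{H_2=k_2\}$ forces $\{H_1\le k_1\}=\{H_2\le k_2\}$ and $\{H_1<k_1\}=\{H_2<k_2\}$, and the same upstairs for the lifts. A compactness argument on $M$ then yields, for $\varepsilon$ near $0$, a continuous non-decreasing $g$ with $g(0)=0$, $g(\varepsilon)<0$ for $\varepsilon<0$ (in the main case $k_i>e_0(L_i)$, to which the rest reduces via the first three items), and $\{H_1\le k_1+\varepsilon\}\subset\{H_2\le k_2+g(\varepsilon)\}$, together with a symmetric statement. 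Taking $\varepsilon<0$ shows that an admissible $\widetilde\theta$ with $\sup_{\widetilde q}\widetilde H_1(\widetilde q,\widetilde\theta_{\widetilde q})<k_1$ also has $\sup_{\widetilde q}\widetilde H_2(\widetilde q,\widetilde\theta_{\widetilde q})<k_2$, so $c(L_1,\sigma)<k_1\iff c(L_2,\sigma)<k_2$; negating this gives the equivalence of the ``$\ge$'' conditions. Taking $\varepsilon>0$ and letting $\varepsilon\downarrow 0$ shows that $c(L_1,\sigma)\le k_1$ implies $c(L_2,\sigma)\le k_2+g(\varepsilon)$ for every $\varepsilon>0$, hence $c(L_2,\sigma)\le k_2$, and symmetry closes the equivalence of the ``$\le$'' conditions. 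I expect the genuinely delicate steps to be the uniform modulus and the limiting argument in the third item, together with the uniform-over-$M$ comparison estimate $g$ in the fourth; everything else is routine once the Fenchel inequality and uniform superlinearity are in hand.
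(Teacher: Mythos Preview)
The paper states Proposition~\ref{prp-man} without proof (it is listed as a known background result, with no argument given between the statement and the next subsection), so there is no paper proof to compare against. I will therefore assess your proposal on its own.

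Your arguments for the first two items and the forward implication in the third are standard and correct. The converse in the third item is the most delicate part, and your approach is sound: under $e_0(L)=e_m(L)$ the fibrewise minimum of $H$ is attained uniquely at $\theta_0(q)$ with common value $e_0(L)$, so positive-definiteness of $\partial^2 H/\partial p^2$ together with compactness of $M$ gives a uniform modulus $\psi$ as you claim; the distributional passage $d\widetilde\theta_n=\widetilde\sigma\Rightarrow d\widetilde\theta_0=\widetilde\sigma$ from $C^0$ convergence is legitimate since both sides are smooth, and descent to $M$ is immediate.

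For the fourth item your strategy is correct but the bookkeeping deserves a little more care. The key identity $\{H_1\le k_1\}=\{H_2\le k_2\}$ (and the open version) follows because in each fibre the sublevel is the convex body bounded by the level sphere; from this the comparison modulus $g$ exists by a straightforward compactness argument on $M$. Your reduction of the intermediate range $e_m(L_i)\le k_i\le e_0(L_i)$ to ``the first three items'' is a bit quick: what you actually need there is only item~(2), which already gives $c(L_i,\sigma)\ge e_0(L_i)\ge k_i$ for both $i$, so the ``$\ge$'' equivalence is automatic; for the ``$\le$'' equivalence note that $c(L_i,\sigma)\le k_i$ can then only occur if $c(L_i,\sigma)=e_0(L_i)=k_i$, and one checks directly (using $\{H_1=k_1\}=\{H_2=k_2\}$ fibrewise) that $e_0(L_1)=k_1$ iff $e_0(L_2)=k_2$, after which your $\varepsilon>0$ argument applies verbatim. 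With this small clarification the proof goes through.
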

\bigskip

\subsection{Example I: electromagnetic Lagrangians}
Let $g$ be a Riemannian metric on $M$ and $V:M\rightarrow\R$ be a function. Suppose that the Lagrangian is of \textit{mechanical type}, namely it has the form
\begin{equation*}
 L(q,v)\ =\ \frac{1}{2}|v|^2\ -\ V(q)\,,
\end{equation*}
where $|\cdot|$ is the norm associated to $g$. In this case we refer to $\Phi^{(L,\sigma)}$ as a \textit{magnetic flow} since we have the following physical interpretation of this system: it models the motion of a charged particle $\gamma$ moving in $M$ under the influence of a potential $V$ and a stationary magnetic field $\sigma$. Using Exercise \ref{exe-EL}, the equation of motion reads
\begin{equation}\label{El-sur}
 \nabla_{\dot{\gamma}}\dot\gamma\ =\ -\nabla V(\gamma)\ +\ Y_\gamma(\dot\gamma)\,,
\end{equation}
where $\nabla V$ is the gradient of $V$ and, for every $q\in M$, $Y_q:T_qM\rightarrow T_qM$ is defined by
\begin{equation*}
g_q(Y_q(v_1),v_2)\ =\ \sigma_q(v_1,v_2)\,,\quad\forall\, v_1,v_2\in T_qM\,.
\end{equation*}

\begin{exe}
Prove that, if $k>\max V$, $\Phi^{(L,\sigma)}|_{\Sigma_k}$ can be described in terms of a purely kinetic system. Namely, define the Jacobi metric $g_k:=\frac{k-V}{k}g$ and the Lagrangian $L_k(q,v):=\frac{1}{2}|v|^2_k$, where $|\cdot|_k$ is the norm induced by $g_k$. Using the Hamiltonian formulation, show that $\Phi^{(L,\sigma)}|_{\{E=k\}}$ is conjugated (up to time reparametrization) to $\Phi^{(L_k,\sigma)}|_{\{E_k=k\}}$, where $E_k$ is the energy function of $L_k$.
\end{exe}

In the particular case $M=S^2$, magnetic flows describe yet another interesting mechanical system. Consider a rigid body in $\R^3$ with a fixed point and moving under the influence of a potential $V$. Suppose that $V$ is invariant under rotations around the axis $\hat z$. We identify the rigid body as an element $\psi\in SO(3)$. Since $SO(3)$ is a Lie group, we use left multiplications to get $TSO(3)\simeq SO(3)\times\R^3\ni(\psi,\Omega)$, where $\Omega$ is the angular speed of the body. Thus, we have a Lagrangian system on $SO(3)$ with $L=\frac{1}{2}|\Omega|^2-V(\psi)$ and $\sigma=0$. Here $|\cdot|$ denote the metric induced by the tensor of inertia of the body.

The quotient of $SO(3)$ by the action of the group of rotations around $\hat z$ is a two-sphere. The quotient map $q:SO(3)\rightarrow S^2$ sends $\psi$ to the unit vector in $\R^3$, whose entries are the coordinates of $\hat z$ in the basis determined by $\psi$.

By the rotational symmetry, the quantity $\Omega\cdot \hat z$ is an integral of motion. Hence, for every $\omega\in\R$, the set $\{\Omega\cdot\hat z=\omega\}\subset TSO(3)$ is invariant under the flow and we have the commutative diagram
\begin{equation*}
\xymatrix{
 \big(\{\Omega\cdot\hat z=\omega\},X_{(L,0)}\big)\ar[r]^-{dq} \ar[d]_{\pi} & \big(TS^2,X_{(L_\omega,\sigma_\omega)}\big) \ar[d]^{\pi} \\
 SO(3)\ar[r]^{q} & S^2\,.}
\end{equation*}
The resulting twisted Lagrangian system $(L_\omega,\sigma_\omega)$ on $S^2$ can be described as follows:
\begin{itemize}
 \item $L_\omega(q,v)=\frac{1}{2}|v|^2-V_\omega(q)$, where $|\cdot|$ is the norm associated to a \textit{convex} metric $g$ on $S^2$ (independent of $\omega$) and $V_\omega$ is a potential (depending on $\omega$);
 \item $\sigma_\omega=\omega\cdot\kappa$, where $\kappa$ is the curvature form of $g$ (in particular $\sigma_\omega$ has integral $4\pi\omega$ and, if $\omega\neq0$, it is a symplectic form on $S^2$). 
\end{itemize}
The rigid body model presented in this subsection is described in detail in \cite{Kha79}. We refer the reader to \cite{Nov82}, for other relevant problems in classical mechanics that can be described in terms of twisted Lagrangian systems. 
\bigskip

\subsection{Example II: magnetic flows on surfaces}\label{sub:hom}
We now specialize further the example of electromagnetic Lagrangians that we discussed in the previous subsection and we consider purely kinetic systems on a closed oriented Riemannian surface $(M,g)$. In this case 
\begin{equation}
L(q,v)\,:=\ \frac{1}{2}|v|^2\,,
\end{equation}
and $\sigma=f\cdot\mu$, where $\mu$ is the metric area form and $f:M\rightarrow \R$. The magnetic endomorphism can be written as $Y=f\cdot\imath$, where $\imath:TM\rightarrow TM$ is the fibrewise rotation by $\pi/2$.
\begin{rmk}
If the surface is isometrically embedded in the Euclidean space $\R^3$, $Y$ is the classical Lorentz force. Namely, we have $Y_q(v)=v\times B(q)$, where $\times$ is the outer product of vectors in $\R^3$ and $B$ is the vector field $B:M\rightarrow\R^3$ perpendicular to $M$ and determined by the equation $\op{vol}_{\R^3}(B,\,\cdot,\,\cdot\,)=\sigma$, where $\op{vol}_{\R^3}$ is the Euclidean volume.
\end{rmk}
For purely kinetic systems $E=L$ and, therefore, the solutions of the twisted Euler-Lagrange equations are parametrized by a multiple of the arc length. More precisely, if $(\gamma,\dot\gamma)\subset\Sigma_k$, then $|\dot\gamma|=\sqrt{2k}$. In particular, the solutions with $k=0$ are exactly the constant curves. To characterise the solutions with $k>0$ we write down explicitly the twisted Euler-Lagrange equation \eqref{El-sur}:
\begin{equation}\label{El-sur2}
\nabla_{\dot\gamma}\dot\gamma=f(\gamma)\cdot\imath\dot\gamma\,.
\end{equation}
We see that $\gamma$ satisfies \eqref{El-sur2} if and only if $|\dot\gamma|=\sqrt{2k}$ and
\begin{equation}\label{cur-sur}
\kappa_\gamma=s\cdot f(\gamma)\,,\quad\quad s\,:=\ \frac{1}{\sqrt{2k}}\,,
\end{equation}
where $\kappa_\gamma$ is the geodesic curvature of $\gamma$. The advantage of working with Equation \eqref{cur-sur} is that it is invariant under orientation-preserving reparametrizations.

Let us do some explicit computations when the data are homogeneous. Thus, let $g$ be a metric of constant curvature on $M$ and let $\sigma=\mu$. When $M\neq\T^2$ we assume, furthermore, that the absolute value of the Gaussian curvature is $1$. 
By \eqref{cur-sur}, in order to find the trajectories of $\Phi^{(L,\sigma)}$ we need to solve the equation $\kappa_\gamma=s$ for all $s>0$.

Denote by $\widetilde M$ the universal cover of $M$. Then, $\widetilde{S^2}=S^2$, $\widetilde{\T^2}=\R^2$ and, if $M$ has genus larger than one, $\widetilde M=\mathbb H$, where $\mathbb H$ is the hyperbolic plane. Our strategy will be to study the trajectories of the lifted flow and then project them down to $M$. Working on the universal cover is easier since there the problem has a bigger symmetry group. Notice, indeed, that the lifted flow is invariant under the group of orientation preserving isometries $\op{Iso}_+(\widetilde M)$.

\subsubsection{The two-sphere}
Let us fix geodesic polar coordinates $(r,\varphi)\in(0,\pi)\times \R/2\pi\Z$ around a point $q\in S^2$ corresponding to $r=0$. The metric takes the form
$dr^2+(\sin r)^2d\varphi^2$. Let $C_{r}(q)$ be the boundary of the geodesic ball of radius $r$ oriented in the counter-clockwise sense. We compute $\kappa_{C_{r}(q)}=\frac{1}{\tan r}$. Observe that $\tan r$ takes every positive value exactly once for $r\in(0,\pi/2)$. Therefore, if $s>0$, the trajectories of the flow are all supported on $C_{r(s)}(q)$, where $q$ varies in $S^2$ and \begin{equation}
r(s)\ =\ \arctan\frac{1}{s}\,\in\, (0,\pi/2)\,.
\end{equation}
In particular, all orbits are closed and their period is
\begin{equation*}
T(s)\ =\ \frac{2\pi s}{\sqrt{s^2+1}}\,.
\end{equation*}

\subsubsection{The two-torus}
In this case we readily see that the trajectories of the lifted flow are circles of radius $r(s)=1/s$. In particular, all the orbits are closed and contractible. Their period is $T(s)=2\pi$, hence it is independent of $s$ (or $k$).

\subsubsection{The hyperbolic surface}
We fix geodesic polar coordinates $(r,\varphi)\in(0,+\infty)\times \R/2\pi\Z$ around a point $q\in \mathbb H$ corresponding to $r=0$. The metric takes the form $dr^2+(\sinh r)^2d\varphi^2$. Defining $C_r(q)$ as in the case of $S^2$, we find $\kappa_{C_r(q)}=\frac{1}{\tanh r}$. Observe that $\tanh r$ takes all the values in $(0,1)$ exactly once, for $r\in (0,+\infty)$. Therefore, if $s\in(1,+\infty)$, the trajectories of the flow are the closed curves $C_{r(s)}(q)$, where $q$ varies in $\mathbb H$ and
\begin{equation}
r(s)\ =\ \op{arc}\!\tanh\frac{1}{s}\,\in\, (0,+\infty)
\end{equation}
In particular, for $s$ in this range all periodic orbits are contractible. The formula for the periods now reads
\begin{equation*}
T(s)=\frac{2\pi s}{\sqrt{s^2-1}}\,.
\end{equation*}

To understand what happens, when $s\leq1$ we take the upper half-plane as a model for the hyperbolic plane.
Thus, let $\mathbb H=\{\,z=(x,y)\in\C\ |\ y>0\,\}$. In these coordinates, the hyperbolic metric has the form $\frac{dx^2+dy^2}{y^2}$ and
\begin{equation*}
\op{Iso}_+(\mathbb H)\ =\ \Big\{\,z\mapsto\frac{az+b}{cz+d}\ \,\Big|\ \,a,b,c,d\in\R\,, \ ad-bc\,=\,1\,\Big\}\,.
\end{equation*}
We readily see that the affine transformations $z\mapsto az$, with $a>0$ form a  subgroup of $\op{Iso}_+(\mathbb H)$. This subgroup preserves all the Euclidean rays from the origin and acts transitively on each of them. Hence, we conclude that such curves have constant geodesic curvatures. If $\varphi\in(0,\pi)$ is the angle made by such ray with the $x$-axis, we find that the geodesic curvature of such ray is $\cos\varphi$. In order to do such computation one has to write the metric using Euclidean polar coordinates centered at the origin. Using the whole isometry group, we see that all the segments of circle intersecting $\partial\mathbb H$ with angle $\varphi$ have geodesic curvature $\cos\varphi$.

We claim that if $s\in(0,1)$ and $\nu\neq0$ is a free homotopy class of loops of $M$, there is a unique closed curve $\gamma_{s,\nu}$ in the class $\nu$, which has geodesic curvature $s$.
The class $\nu$ correspond to a conjugacy class in $\pi_1(M)$. We identify $\pi_1(M)$ with the set of deck transformations and we let $F:\mathbb H\rightarrow\mathbb H$ be a deck transformation belonging to the given conjugacy class. By a standard result in hyperbolic geometry, $F$ has two fixed points on $\partial\mathbb H$ (remember, for example, that there exists a geodesic in $\mathbb H$ invariant under $F$). Then, $\gamma_{s,\nu}$ is the projection to $M$ of the unique segment of circle connecting the fixed points of $F$ and making an angle $\varphi=\arccos s$ with $\partial\mathbb H$. The uniqueness of $\gamma_{s,\nu}$ stems form the uniqueness of such segment of circle.

In a similar fashion, we consider the subgroup of $\op{Iso}_+(\mathbb H)$ made by the maps $z\mapsto z+b$, with $b\in\R$. It preserves the horizontal line $\{y=1\}$ and act transitively on it. Hence, such curve has constant geodesic curvature. A computation shows that it is equal to $1$, if it is oriented by $\partial_x$. Using the whole isometry group, we see that all the circles tangent to $\partial \mathbb H$ have geodesic curvature equal to $1$. Following \cite{Gin96} we see that there is no closed curve in $M$ with such geodesic curvature. By contradiction, if such curve exist, then its lift would be preserved by a non-constant deck transformation. We can assume without loss of generality that such lift is the line $\{y=1\}$. We readily see that the only elements in $\op{Iso}_+(\mathbb H)$ which preserve $\{y=1\}$ are the horizontal translation. However, no such transformation can be a deck transformation, since it has only one fixed point on $\partial\mathbb H$.

\begin{exe}
Show that in this case $c(L,\sigma)=\frac{1}{2}$.
\end{exe}
\bigskip

\subsection{The Main Theorem}
We are now ready to state the central result of this mini-course.
\begin{thm}\label{thm:main}
The following four statements hold.
\begin{enumerate}
 \item Suppose $[\widetilde\sigma]_b=0$. For every $k>c(L,\sigma)$,
 \begin{enumerate}
  \item there exists a closed orbit on $\Sigma_k$ in any non-trivial free homotopy class;
  \item if $\pi_{d+1}(M)\neq0$ for some $d\geq 1$, there exists a contractible orbit on $\Sigma_k$.
 \end{enumerate}
 \item Suppose $[\widetilde\sigma]=0$. There exists a contractible orbit on $\Sigma_k$, for almost every energy $k\in(e_0(L),c(L,\sigma))$.
 \item Suppose $[\widetilde\sigma]\neq0$. There exists a contractible orbit on $\Sigma_k$, for almost every energy $k\in(e_0(L),+\infty)$.
 \item There exists a contractible orbit on $\Sigma_k$, for almost every $k\in(e_m(L),e_0(L))$. 
\end{enumerate}
The set for which existence holds in (2), (3) and (4) contains all the $k's$ for which $\Sigma_k^*$ is a stable hypersurface in $(T^*M,\omega_\sigma)$ (see \cite[\textit{page 122}]{HZ94}). 
\end{thm}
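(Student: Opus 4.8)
The plan is to prove Theorem \ref{thm:main} by passing to the Hamiltonian side, where the problem becomes one of finding closed characteristics on a fixed hypersurface $\Sigma_k^*\subset(T^*M,\omega_\sigma)$, and then combining three ingredients: (i) a variational principle for the free-period action functional, adapted to the twisted cotangent bundle using a primitive of $\sigma$ on the relevant cover; (ii) the Struwe-type monotonicity / Palais–Smale-with-almost-every-parameter argument that produces critical points for almost every value of a parameter (here the energy $k$, or equivalently a length/period parameter); and (iii) the relation between the Mañé critical value $c(L,\sigma)$ and the boundedness/coercivity of the action functional. For part (1) one works on the universal cover $\widetilde M$, where $\widetilde\sigma=d\widetilde\theta$ with $\widetilde\theta$ bounded (this is exactly the hypothesis $[\widetilde\sigma]_b=0$); the bounded primitive guarantees, for $k>c(L,\sigma)$, that the Mañé action functional is bounded below and coercive on the relevant space of loops, so a genuine minimization in a nontrivial free homotopy class yields (1a), while a minimax over a sphere of dimension $d+1$ detected by $\pi_{d+1}(M)\neq 0$ yields (1b). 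For parts (2)--(4), where no bounded primitive exists, one cannot minimize; instead one uses the monotonicity trick: embed the problem in a one-parameter family (parametrized by $k$, or by a conformal/Jacobi-type rescaling), show the relevant minimax values depend monotonically on the parameter, hence are differentiable for almost every parameter, and at each such parameter the Palais–Smale condition can be recovered along a well-chosen sequence, producing a closed orbit on $\Sigma_k$ for almost every $k$ in the stated range.

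First I would set up the Hamiltonian reformulation: by the discussion preceding the statement, closed orbits of $\Phi^{(L,\sigma)}$ on $\Sigma_k$ correspond to closed characteristics of $\omega_\sigma$ on $\Sigma_k^*=\{H=k\}$, and by the last bullet of Proposition \ref{prp-man} the position of $k$ relative to $c(L,\sigma)$ depends only on the hypersurface, not on the chosen Tonelli Hamiltonian — so one is free to replace $H$ by any convenient Hamiltonian with the same level set, e.g. a mechanical one. Next I would recall the free-period action functional $S_k(\gamma,T)=\int_0^T\big(L(\gamma,\dot\gamma)+k\big)\,dt - \int_\gamma\theta$ (on the appropriate cover or appropriate homotopy class, where $\theta$ is a primitive of $\sigma$), whose critical points with positive period are exactly the periodic orbits on $\Sigma_k$, and I would quote the standard facts about its regularity and its Palais–Smale sequences on the Hilbert manifold of $W^{1,2}$ loops with free period. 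The key structural input, which I would then invoke from \cite{Con06,Abb13,Mer10,AB15b}, is: above $c(L,\sigma)$ the functional is bounded below on noncontractible classes (giving (1a)) and has linking-type geometry from $\pi_{d+1}(M)$ (giving (1b)); below $c(L,\sigma)$ it is unbounded below but the sublevel sets still have nontrivial topology forcing a mountain-pass or minimax value $c(k)$, which is finite and monotone in $k$.

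Then I would run the monotonicity argument for (2)--(4): since $k\mapsto c(k)$ is monotone (each term in the action depends monotonically on $k$), it is differentiable at almost every $k$; at a point of differentiability one extracts an almost-minimizing sequence whose periods stay bounded (the bound on the derivative of $c(k)$ controls the period, which is the "bad" noncompact direction in the free-period setting), recovers compactness, and obtains a critical point — a contractible closed orbit on $\Sigma_k$. The three cases (2), (3), (4) differ only in the range of $k$ over which this minimax value is nontrivial and finite: between $e_0(L)$ and $c(L,\sigma)$ when $[\widetilde\sigma]=0$; on all of $(e_0(L),+\infty)$ when $[\widetilde\sigma]\neq 0$ (here $c(L,\sigma)=+\infty$ effectively, or is not defined, and the sphere-type hypersurface topology above $e_0(L)$ supplies the minimax geometry); and on $(e_m(L),e_0(L))$ in general, where $\Sigma_k$ sits near the zero section and the minimax is over short contractible loops. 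Finally, for the last sentence of the theorem, I would argue that stability of $\Sigma_k^*$ in the sense of Hofer–Zehnder provides a uniform "thickening" — a symplectic collar $\Sigma_k^*\times(-\varepsilon,\varepsilon)$ on which the characteristic foliations have controlled dynamics — which is exactly what is needed to upgrade "almost every $k$" to "this particular $k$": one applies the almost-every-$k$ result inside the collar and then uses stability to transport the orbit back to the level $k$ itself (this is the Hofer–Zehnder / Struwe mechanism). The main obstacle, as always in the free-period setting, is the lack of the Palais–Smale condition — sequences with period tending to $0$ or $\infty$, and in the twisted case the additional failure of coercivity below $c(L,\sigma)$; the monotonicity trick is precisely the device that circumvents it at almost every energy, and stability is what closes the remaining measure-zero gap.
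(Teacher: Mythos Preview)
Your overall strategy matches the paper's: free-period action on the $W^{1,2}$ loop space, minimization in a nontrivial class for (1a), a $\pi_{d+1}(M)$-based minimax for (1b), and the Struwe monotonicity trick to recover Palais--Smale for almost every $k$ in (2)--(4). The stability argument at the end is also the right mechanism. Two points, however, need correction.

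First, in case (3) you write that ``the sphere-type hypersurface topology above $e_0(L)$ supplies the minimax geometry''. That is not the source of the minimax class. The paper's argument is that $[\widetilde\sigma]\neq 0$ means there is a map $S^2\to M$ on which $\sigma$ integrates nontrivially; in particular $\pi_2(M)\neq 0$, and this nonzero element produces, via the standard correspondence $[S^2,M]\simeq[(B^1,\partial B^1),(W^{1,2}_0,M_0)]$, a nontrivial relative class of paths of contractible loops. That class is the minimax family. The fact that $\Sigma_k$ is a sphere bundle plays no role here; without identifying $\pi_2(M)\neq 0$ you have no candidate minimax class in case (3).

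Second, and relatedly, your functional $S_k(\gamma,T)=\int_0^T(L+k)\,dt-\int_\gamma\theta$ requires a primitive $\theta$ of $\sigma$ ``on the appropriate cover''. In case (3) no such primitive exists on any cover, since $[\widetilde\sigma]\neq 0$ already on the universal cover. The paper deals with this by replacing the functional with the closed \emph{action $1$-form} $\eta_k=dA_k-\tau^\sigma$ on $\Lambda$, which always makes sense; a genuine primitive $S_k$ exists only locally, on the neighbourhood $\mathcal V^{\delta}$ of constant loops, and the abstract minimax theorem (Theorem~\ref{thm-for}) is formulated precisely to accommodate this: it requires a primitive only on the set $\mathscr V$ where the mountain-pass boundary sits, and works with path-integrals of $\eta_k$ elsewhere. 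Your sketch glosses over this, and as written would not go through for (3).
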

In these notes, we will prove (1), (2) and (3) above by relating closed orbits of the flow to the zeros of a closed $1$-form $\eta_k$ on the space of loops on $M$. We introduce such form and prove some of its general properties in Section \ref{sec:act}. In Section \ref{sec:min} we describe an abstract minimax method that we apply in Section \ref{sec:geo} to obtain zeros of $\eta_k$ in the specific cases listed in the theorem. A proof of (4) relies on different methods and it can be found in \cite{AB15b}.

\begin{rmk}
When $[\sigma]=0$, the theorem was proven by Contreras \cite{Con06}. Point \textit{(1)} and \textit{(2)}, with the additional hypothesis $[\widetilde\sigma]_b=0$, were proven by Osuna \cite{Osu05}. Point \textit{(2)} was proven in \cite{Mer10,Mer16}, for electromagnetic Lagrangians, and in \cite{AB15b} for general systems. A sketch of the proof of point \textit{(3)} was given in \cite[Section 3]{Nov82} and in \cite[Section 3.2]{Koz85}. It was rigorously established in \cite{AB15b}. Point \textit{(4)} follows by employing tools in symplectic geometry. For the weakly exact case it can also be proven using a variational approach as shown in \cite[Section 7]{Abb13}. For Lagrangians of mechanical type and vanishing magnetic form the existence problem in such interval has historically received much attention (see \cite[Section 2]{Koz85} and references therein). 
\end{rmk}
\noindent We end up this introduction by defining the notion of stability mentioned in the theorem.
\bigskip

\subsection{Stable hypersurfaces}
In general, the dynamics on $\Sigma^*_k$ may exhibit very different behaviours as $k$ changes. However, given a regular energy level $\Sigma^*_{k_0}$, in some special cases we can find a new Hamiltonian $H':T^*M\rightarrow\R$ such that $\{H'=k'_0\}=\Sigma^*_{k_0}$ and such that $\Phi^{(H',\sigma)}|_{\{H'=k'_0\}}$ and $\Phi^{(H',\sigma)}|_{\{H'=k'\}}$ are conjugated, up to a time reparametrization, provided $k'$ is sufficiently close to $k'_0$.
\begin{dfn}
We say that an embedded hypersurface $\imath:\Sigma^*\longrightarrow T^*M$ is $\mathsf{stable}$ in the symplectic manifold $(T^*M,\omega_\sigma)$ if there exists an open neighbourhood $W$ of $\Sigma^*$ and a diffeomorphism $\Psi_W:\Sigma^*\times(-\varepsilon_0,\varepsilon_0)\rightarrow W$ with the property that:
\begin{itemize}
 \item $\Psi_W|_{\Sigma^*\times\{0\}}=\imath$;
 \item the function $H^W:W\rightarrow \R$ defined through the commutative diagram
\begin{equation*}
\xymatrix{
 \Sigma^*\times(-\varepsilon_0,\varepsilon_0)\ar[r]^-{\Psi_W} \ar[d]_{\op{pr}_2} & W \ar[dl]^-{H^W}\,, \\
 (-\varepsilon_0,\varepsilon_0) & }
\end{equation*}
is such that, for every $k\in(-\varepsilon_0,\varepsilon_0)$, $\Phi^{(H^W,\sigma)}|_{\{H^W=0\}}$ and $\Phi^{(H^W,\sigma)}|_{\{H^W=k\}}$ are conjugated by the diffeomorphism $w\mapsto\Psi_W(\imath^{-1}(w),k)$ up to time reparametrization. In this case, the reparametrizing maps $\tau_{(z,k)}$ vary smoothly with $(z,k)\in\Sigma^*\times(-\varepsilon_0,\varepsilon_0)$ and satisfy $\tau_{(z,0)}=\op{Id}_\R$, for all $z\in\Sigma^*$.
\end{itemize}
This implies that there is a bijection between the periodic orbits on $\Sigma^*=\{H^W=0\}$ and those on $\{H^W=k\}$.
\end{dfn}

Thanks to a result of Macarini and G.\ Paternain \cite{MP10}, if $\Sigma^*$ is the energy level of some Tonelli Hamiltonian, the function $H^W$ can be taken to be Tonelli as well.
\begin{prp}
Suppose that for some $k>e_0(L)$, $\Sigma^*_k$ is stable with stabilizing neighbourhood $W$. Up to shrinking $W$, there exists a Tonelli Hamiltonian $H_k:T^*M\rightarrow\R$ such that $H^W=H_k$ on $W$.
\end{prp}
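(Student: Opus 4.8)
The plan is to combine the stability structure on $\Sigma^*_k$ with the theorem of Macarini and G.~Paternain to produce, first, \emph{some} Hamiltonian defined near $\Sigma^*_k$ realising the stabilisation, and then to argue that it can be taken Tonelli on a possibly smaller neighbourhood. Concretely, stability gives a tubular neighbourhood $W$ with a diffeomorphism $\Psi_W:\Sigma^*_k\times(-\varepsilon_0,\varepsilon_0)\to W$ and the associated function $H^W=\op{pr}_2\circ\Psi_W^{-1}$, whose level sets sweep out $W$ and whose flow on each level is conjugate to the flow on $\Sigma^*_k$ up to reparametrisation. Since $\Sigma^*_k=\{H=k\}$ for the original Tonelli Hamiltonian $H$ and $k>e_0(L)$ is a regular value, $\Sigma^*_k$ is diffeomorphic to the unit cotangent bundle $S^*M$ by the third bullet of the energy proposition; in particular it is fibrewise star-shaped.

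First I would invoke \cite{MP10} to replace $H^W$ by a function $\widehat H$ defined on a neighbourhood $W'\subseteq W$ of $\Sigma^*_k$ with $\{\widehat H=0\}=\Sigma^*_k$ and with the property that $\widehat H$ is fibrewise convex and superlinear where it is defined — this is exactly the content of their result that a stable energy hypersurface bounding a fibrewise star-shaped region admits a Tonelli generating Hamiltonian; the flow it generates on the level sets inside $W'$ still agrees, up to reparametrisation, with the stabilised flow (reparametrisation is all that matters for the bijection of periodic orbits, and both $H^W$ and $\widehat H$ have the same level sets so by the remark in Section~\ref{sub:twi} their flows restricted to a common level coincide up to time change). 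Second, I would extend $\widehat H$ from $W'$ to all of $T^*M$ to obtain a globally defined Tonelli Hamiltonian $H_k$ agreeing with $\widehat H$, hence with $H^W$, on a neighbourhood of $\Sigma^*_k$. For the extension one fixes a fibrewise-convex superlinear model away from $\Sigma^*_k$ — for instance $\tfrac12|p|^2_g + \text{const}$ outside a compact set — and glues it to $\widehat H$ across the annular region using a partition of unity in the radial variable; one must check that the interpolation preserves fibrewise strict convexity. This is where a little care is needed: convexity is not preserved under arbitrary convex combinations of the \emph{values} unless one interpolates in a way adapted to the fibre geometry.

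The main obstacle is precisely this last gluing step: ensuring that the extension $H_k$ is strictly fibrewise convex \emph{everywhere}, not merely near $\Sigma^*_k$ and near infinity. One clean way around it is to note that $\widehat H$ is fibrewise convex on $W'$, so for each $q$ the hypersurface $\Sigma^*_k\cap T^*_qM$ bounds a strictly convex domain $D_q\subset T^*_qM$ containing $0$; one can then simply \emph{define} $H_k$ fibrewise as a convex radial reparametrisation of the gauge (Minkowski) functional of $D_q$, chosen to coincide with $\widehat H$ to first order along $\partial D_q=\Sigma^*_k\cap T^*_qM$. Because the gauge functional of a strictly convex body is itself fibrewise convex, and radial reparametrisations $\rho\mapsto \phi(\rho)$ with $\phi'>0$, $\phi''\ge 0$ preserve convexity, strict convexity and superlinearity are automatic, and smoothness of the dependence on $q$ follows from smoothness of the embedding $\Sigma^*_k\hookrightarrow T^*M$. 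Choosing $\phi$ so that $\phi(1)=0$, $\phi'(1)$ matches the relevant radial derivative of $\widehat H$, and $\phi$ has the desired asymptotics at $0$ and $\infty$ is elementary. One then shrinks $W$ to the region where $H_k=H^W$; on that region the original stabilising data $\Psi_W$ still exhibit the conjugacy, so all the defining properties of stability are retained with $H^W$ replaced by the Tonelli Hamiltonian $H_k$. \qed
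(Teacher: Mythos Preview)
The paper does not give its own proof of this proposition; it is stated as a direct consequence of Macarini--Paternain \cite{MP10}. Your proposal attempts to sketch the argument behind that citation, but there is a genuine gap.

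The construction in your third paragraph builds $H_k$ from the Minkowski gauge of the \emph{single} fibrewise domain $D_q$ bounded by $\Sigma^*_k\cap T_q^*M$. The level sets of such an $H_k$ near $\Sigma^*_k$ are then fibrewise dilations of $\Sigma^*_k$, and there is no reason these should coincide with the leaves $\{H^W=\varepsilon\}$ of the stabilising foliation: stability does not say that nearby leaves are rescalings of $\Sigma^*_k$ in the fibre. Matching ``to first order along $\partial D_q$'' does not repair this --- first-order agreement along a hypersurface does not give equality on an open neighbourhood. Consequently ``shrinking $W$ to the region where $H_k=H^W$'' collapses $W$ to $\Sigma^*_k$ itself, not to an open set. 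The same slippage already occurs in your second paragraph, where from ``$\widehat H$ and $H^W$ have the same level sets'' you jump to ``$H_k$ agrees with $\widehat H$, hence with $H^W$''; equal level sets only give $H^W=\psi\circ H_k$ for some reparametrisation $\psi$, and composing with $\psi$ can destroy fibrewise convexity.

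What is needed --- and what \cite{MP10} supplies --- is an argument that uses the \emph{entire} stabilising foliation: one shows, from the stabilising form together with the optical condition coming from $k>e_0(L)$, that (after possibly adjusting $\Psi_W$) every nearby leaf $\Psi_W(\Sigma^*_k\times\{\varepsilon\})$ is itself fibrewise strictly convex and encloses the zero section, and then one builds a Tonelli Hamiltonian whose sublevel sets are precisely the nested fibrewise-convex domains bounded by these leaves, with the labelling inherited from $H^W$. Your gauge-functional idea is the right tool for the global extension once this is in place, but applied only to the single level $\Sigma^*_k$ it cannot see the foliation and therefore cannot reproduce $H^W$ on a neighbourhood.
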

In order to check whether an energy level is stable or not, we give the following necessary and sufficient criterion that can be found in \cite[Lemma 2.3]{CM05}.
\begin{prp}
A hypersurface $\Sigma_k^*$ is stable if and only if there exists $\alpha\in\Omega^1(\Sigma_k^*)$ such that
\begin{equation*}
\textit{(a)}\ \ d\alpha(X_{(H,\sigma)},\,\cdot\,)\ =\ 0\,,\quad\quad \textit{(b)}\ \ \alpha(X_{(H,\sigma)})(z)\ \neq\ 0\,,\quad\forall\, z\in\Sigma_k^*\,.
\end{equation*}
In this case $\alpha$ is called a $\mathsf{stabilizing\ form}$. The first condition is implied by the following stronger assumption
\begin{equation*}
\textit{(a')}\ \ d\alpha\ =\ \omega_\sigma|_{\Sigma^*_k}\,.
\end{equation*}
If (a') and (b) are satisfied we say that $\Sigma^*_k$ is of $\mathsf{contact\ type}$ and we call $\alpha$ a contact form. We distinguish between $\mathsf{positive}$ and $\mathsf{negative}$ contact forms according to the sign of the function $\alpha(X_{(H,\sigma)})$.
\end{prp}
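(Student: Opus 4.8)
The plan is to phrase everything in terms of the characteristic line field of $\Sigma^*_k$. Since $k$ is regular, $\omega_0:=\omega_\sigma|_{\Sigma^*_k}$ has a $1$--dimensional kernel spanned by $R:=X_{(H,\sigma)}|_{\Sigma^*_k}$, because $\iota_R\omega_\sigma=-dH$ vanishes on $T\Sigma^*_k=\ker dH$; equivalently $\omega_0^{\,n-1}$ is nowhere zero with kernel $\langle R\rangle$, so for $\beta\in\Omega^1(\Sigma^*_k)$ one has $\beta\wedge\omega_0^{\,n-1}\neq0$ at a point exactly when $\beta(R)\neq0$ there (here $n=\dim M$). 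The implication $(a')\Rightarrow(a)$ is then immediate. It remains to prove that $\Sigma^*_k$ is stable if and only if a $1$--form satisfying $(a)$ and $(b)$ exists.

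For the direction ``stable $\Rightarrow$ stabilizing form'', I would pull $\omega_\sigma$ back through $\Psi_W$ to a closed $2$--form $\widehat\Omega$ on $\Sigma^*_k\times(-\varepsilon_0,\varepsilon_0)$ for which $H^W$ is the coordinate $s$, and let $\widehat X$ be the Hamiltonian vector field of $s$, so $\iota_{\widehat X}\widehat\Omega=-ds$ and hence $ds(\widehat X)=0$, i.e.\ $\widehat X$ is tangent to the slices. Differentiating the conjugacy-up-to-reparametrization at $t=0$ shows that $\widehat X=g_s\widehat X_0$, where $\widehat X_0$ is its restriction to the zero slice, read as an $s$--independent field, and $g_s$ is nowhere zero with $g_0\equiv 1$; consequently $[\widehat X,\partial_s]=h\,\widehat X$ with $h=-(\partial_sg_s)/g_s$. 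This infinitesimal version of the reparametrization property is the real point. Setting $\alpha:=\iota_{\partial_s}\widehat\Omega$ restricted to the zero slice (which we identify with $\Sigma^*_k$ via $\Psi_W$), Cartan's formula and $d\widehat\Omega=0$ give $d\alpha=(\mathscr L_{\partial_s}\widehat\Omega)|_{\{s=0\}}$; and from $\mathscr L_{\partial_s}(\iota_{\widehat X}\widehat\Omega)=\iota_{[\partial_s,\widehat X]}\widehat\Omega+\iota_{\widehat X}\mathscr L_{\partial_s}\widehat\Omega$, whose left side is $\mathscr L_{\partial_s}(-ds)=0$ and whose first right-hand term is $\iota_{-h\widehat X}\widehat\Omega=h\,ds$, one gets $\iota_{\widehat X}\mathscr L_{\partial_s}\widehat\Omega=-h\,ds$, which vanishes after restriction to the slice. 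Hence $\iota_{\widehat X_0}d\alpha=0$, while $\alpha(\widehat X_0)=\widehat\Omega(\partial_s,\widehat X_0)|_{\{s=0\}}=ds(\partial_s)=1\neq 0$. Since $\widehat X_0$ and $R$ span the same line $\ker\omega_0$, $\alpha$ satisfies $(a)$ and $(b)$.

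For the converse, given $\alpha$ with $(a),(b)$, I would build a model. On $\Sigma^*_k\times\R$, with projection $p$ and coordinate $s$, put $\Omega:=p^*\omega_0+d(s\,p^*\alpha)=p^*\omega_0+ds\wedge p^*\alpha+s\,p^*d\alpha$, a closed $2$--form. Along the zero slice, $\Omega^n=n\,ds\wedge p^*(\alpha\wedge\omega_0^{\,n-1})$ (the other terms vanish since $(ds\wedge p^*\alpha)^2=0$ and $\omega_0^{\,n}=0$), which is nowhere zero by $(b)$; so $\Omega$ is symplectic on $\Sigma^*_k\times(-\varepsilon_0,\varepsilon_0)$ for some $\varepsilon_0>0$ and restricts to $\omega_0$ on the zero slice. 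By the symplectic neighbourhood theorem for hypersurfaces --- the germ of a symplectic form near a hypersurface is determined, up to a symplectomorphism fixing the hypersurface, by its pullback to the hypersurface --- after shrinking $\varepsilon_0$ there is a symplectomorphism $\Psi_W\colon(\Sigma^*_k\times(-\varepsilon_0,\varepsilon_0),\Omega)\to(W,\omega_\sigma)$ onto a neighbourhood $W$ of $\Sigma^*_k$ with $\Psi_W|_{\Sigma^*_k\times\{0\}}=\imath$; set $H^W:=s\circ\Psi_W^{-1}$. Hypothesis $(a)$ then forces the flows on all nearby levels to agree: the $s$--independent lift $Z$ of $\tfrac1{\alpha(R)}R$ satisfies $\iota_Z\Omega=-ds$, because $\iota_R(p^*\omega_0)=0$, $\iota_R(ds\wedge p^*\alpha)=-\alpha(R)\,ds$, and $\iota_R(s\,p^*d\alpha)=s\,p^*(\iota_Rd\alpha)=0$. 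Therefore $X_{(H^W,\sigma)}$ corresponds under $\Psi_W$ to $Z$, which is tangent to every slice and $s$--independent; so for each $k$ the map $w\mapsto\Psi_W(\imath^{-1}(w),k)$ conjugates the flow on $\{H^W=0\}$ with the flow on $\{H^W=k\}$ \emph{without} reparametrization ($\tau_{(z,k)}=\mathrm{Id}_\R$), and on $\Sigma^*_k$ this flow is a reparametrization of the $X_{(H,\sigma)}$--flow, since $Z|_{\Sigma^*_k}=\tfrac1{\alpha(R)}R$. This is exactly the data required by the definition of stability, and the bijection of periodic orbits follows.

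The step I expect to be the main obstacle is, on the conceptual side, recognizing the two hinges of the argument: the commutator identity $[\widehat X,\partial_s]\parallel\widehat X$, which is the exact infinitesimal shadow of ``the levels differ only by a reparametrization'', and the observation that the harmless-looking condition $(a)$, $\iota_Rd\alpha=0$, is precisely what makes $\tfrac1{\alpha(R)}R$ Hamiltonian for the transverse coordinate of the model $\Omega$, so that all nearby levels share the characteristic foliation $\langle R\rangle$. On the technical side, the heaviest input is the symplectic neighbourhood theorem for hypersurfaces, which I would use as a black box; with it in hand the only remaining work --- checking that $\Omega$ stays nondegenerate on a genuine product neighbourhood, and that the various identifications are smooth and equal the identity at $s=0$ --- is routine.
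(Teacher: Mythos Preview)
The paper does not prove this proposition; it simply records the criterion and refers the reader to \cite[Lemma 2.3]{CM05}. So there is no in-paper argument to compare against.

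Your argument is correct and is essentially the standard proof of this equivalence. A few remarks on the two directions:

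For ``stable $\Rightarrow$ stabilizing form'', the key insight you identify is right: the conjugacy-up-to-reparametrization between levels translates, after pulling back by $\Psi_W$, into $\widehat X=g_s\widehat X_0$ and hence $[\partial_s,\widehat X]\in\langle\widehat X\rangle$; the Cartan calculus you carry out from there is clean and gives $(a)$ and $(b)$ for $\alpha=\imath_{\partial_s}\widehat\Omega\big|_{\{s=0\}}$. The only point worth making explicit is why $\widehat X$ is tangent to the slices (you use this when pulling $\imath_{\widehat X}\mathscr L_{\partial_s}\widehat\Omega$ back to $\{s=0\}$): it follows from $ds(\widehat X)=-\imath_{\widehat X}\imath_{\widehat X}\widehat\Omega=0$, as you note.

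For the converse, your model $\Omega=p^*\omega_0+d(s\,p^*\alpha)$ is the usual one, and the verification that $Z=\tfrac{1}{\alpha(R)}R$ is the Hamiltonian field of $s$ is exactly where condition $(a)$ enters. The black box you invoke---that the germ of a symplectic form near a hypersurface is determined by its pullback---is indeed a theorem: every hypersurface is coisotropic, and this is Gotay's coisotropic neighbourhood theorem (or a direct Moser argument once one normalises the transverse part). It is worth naming it as such, since ``symplectic neighbourhood theorem for hypersurfaces'' is not a universally standard label.

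The observation that $(a')\Rightarrow(a)$ is immediate from $\ker\omega_0=\langle R\rangle$, as you say.
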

\noindent In Section \ref{sec:sta}, we give some sufficient criteria for stability for magnetic flows on surfaces. 

\section{The free period action form}\label{sec:act}
For the proof of the Main Theorem we need to characterize the periodic orbits on $\Sigma_k$ via a variational principle on a space of loops. To this purpose we have first to adjust $L$.
\subsection{Adapting the Lagrangian}
Let us introduce a subclass of Tonelli Lagrangians whose fibrewise growth is quadratic. This will enable us to define the action functional on the space of loops with square-integrable velocity.
\begin{dfn}
We say that $L$ is $\mathsf{quadratic\ at\ infinity}$ if there exists a metric $g_\infty$ and a potential $V_\infty:M\rightarrow\R$ such that $L(q,v)=\frac{1}{2}|v|_\infty^2-V_\infty(q)$ outside a compact set.
\end{dfn}
The next result tells us that, if we look at the dynamics on a fixed energy level, it is not restrictive to assume that the Lagrangian is quadratic at infinity.
\begin{prp}
For any fixed $k\in\R$, there exists a Tonelli Lagrangian $L_k:TM\rightarrow\R$ which is quadratic at infinity and such that $L_k=L$ on $\{E\leq k_0\}$, for some $k_0>k$. By choosing $k_0$ sufficiently large, we can obtain $e_0(L)=e_0(L_k)$ and, if $[\widetilde\sigma]=0$, also $c(L,\sigma)=c(L_k,\sigma)$.
\end{prp}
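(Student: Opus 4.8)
\noindent\textit{Proof strategy.} The plan is to perform the surgery on the Hamiltonian side and transport it back through the Legendre transform. Thus it suffices to produce a Tonelli Hamiltonian $H_k:T^*M\to\R$ such that $H_k=H$ on $\{H\le k_0\}$ and, outside a compact set, $H_k(q,p)=\tfrac12|p|^2_{g_\infty}+V_\infty(q)$ for some metric $g_\infty$ and some $V_\infty\in C^\infty(M)$. Its Legendre dual $L_k$ is then Tonelli; it is quadratic at infinity, because the Legendre map of $H_k$ is a diffeomorphism of $T^*M$ onto $TM$ taking neighbourhoods of infinity to neighbourhoods of infinity and carrying $\tfrac12|p|^2_{g_\infty}+V_\infty$ to $\tfrac12|v|^2_{g_\infty}-V_\infty$; and $L_k=L$ on $\{E<k_0\}$, since on $\op{int}\{H\le k_0\}$ one has $\partial H_k/\partial p=\partial H/\partial p$, hence $\mathcal L_{H_k}=\mathcal L=\mathcal L_L^{-1}$ there (the negligible loss from $<$ to $\le$ is absorbed by enlarging $k_0$ slightly at the start).

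Fix $k_0>\max\{k,e_0(L)\}$ and, in the weakly exact case with $c(L,\sigma)<+\infty$, also $k_0>c(L,\sigma)$; being $>e_0(L)$, the number $k_0$ is a regular value of $H$. For each $q$ the fibrewise sublevel set $D_q:=\{p\in T^*_qM\mid H(q,p)\le k_0\}$ is a compact strictly convex body with smooth boundary (because $\partial^2H/\partial p^2>0$) containing the origin in its interior (because $H(q,0)=V(q)<k_0$), and $D_q$ depends smoothly on $q$. To build $H_k$ we leave $H$ untouched on $\{H\le k_0\}$ and, for $t\ge k_0$, prescribe a smooth family $t\mapsto\widetilde D^{\,t}_q$ of compact, strictly convex, smoothly bounded bodies: it should agree with $\{H(q,\cdot)\le t\}$ for $t$ near $k_0$, be strictly increasing in $t$, exhaust $T^*_qM$ as $t\to+\infty$, and coincide for all large $t$ with the $g$-ball about the origin of $g$-radius $\sqrt{2(t-V_\infty(q))}$, for a suitable $V_\infty$. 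Such a family is obtained by composing two Minkowski homotopies: first fatten $D_q$ by an increasing multiple of the unit $g$-ball, which preserves strict convexity and smoothness and forces the bodies to expand; then, having enclosed the resulting body $D'_q$ in a round ball $B\supseteq D'_q$, run $s\mapsto(1-s)D'_q+sB$, which stays strictly convex and smoothly bounded, is increasing in $s$ (using $D'_q\subseteq B$), and terminates on the round ball $B$. Reading off the function whose fibrewise sublevel sets are the $\widetilde D^{\,t}_q$, with the parametrization by $t$ chosen so that this function is convex in $p$ (possible since the prescribed level sets are positively curved) and matches the prescribed quadratic form near infinity, yields a smooth $H_k$ with $\partial^2 H_k/\partial p^2>0$, fibrewise superlinear (from the exhaustion), equal to $H$ on $\{H\le k_0\}$, and equal to $\tfrac12|p|^2_g+V_\infty(q)$ outside a compact set, as required.

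It remains to verify the three properties of $L_k$. The identity $L_k=L$ on $\{E\le k_0\}$ was explained above. Since the zero section lies in $\{E<k_0\}$, where $L_k=L$, we get $L_k(q,0)=L(q,0)$ for every $q$, so $V_k:=E_k|_{0_M}$ coincides with $V$ and hence $e_m(L_k)=e_m(L)$ and $e_0(L_k)=e_0(L)$. Finally assume $[\widetilde\sigma]=0$. If $[\widetilde\sigma]_b\neq0$, then by the first item of Proposition~\ref{prp-man} we have $c(L,\sigma)=c(L_k,\sigma)=+\infty$, this value depending only on $\sigma$. If $[\widetilde\sigma]_b=0$, then $c(L,\sigma)<+\infty$ and $c(L,\sigma)<k_0$ by our choice; since $\{H_k\le k_0\}=\{H\le k_0\}$ and $H_k=H$ there, we have $\{H_k=t\}=\{H=t\}$ for all $t\le k_0$, so the last item of Proposition~\ref{prp-man}, applied to $(L_1,L_2)=(L,L_k)$ with $k_1=k_2=t$, gives $c(L_k,\sigma)\ge t$ for every regular $t<c(L,\sigma)$ and $c(L_k,\sigma)\le t$ for every regular $t\in(c(L,\sigma),k_0)$; letting $t$ tend to $c(L,\sigma)$ from either side, and using $c(L_k,\sigma)\ge e_0(L_k)=e_0(L)$ to cover the borderline case $c(L,\sigma)=e_0(L)$, we conclude $c(L_k,\sigma)=c(L,\sigma)$.

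I expect the only genuine obstacle to lie in the middle step: arranging that the interpolation between the strictly convex body $D_q$ and a round ball be simultaneously smooth in $(q,t)$, fibrewise strictly convex, strictly monotone in $t$, and \emph{exactly} round for large $t$, so that the resulting $H_k$ is a bona fide Tonelli Hamiltonian, is precisely quadratic at infinity, and glues smoothly to $H$ across $\{H=k_0\}$; once this is carried out, all the remaining assertions are soft, given Proposition~\ref{prp-man}.
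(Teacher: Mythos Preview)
The paper states this proposition without proof, so there is nothing to compare against directly. Your approach---modifying on the Hamiltonian side and pulling back through the Legendre transform---is the natural one, and your verification of $e_0(L_k)=e_0(L)$ and $c(L_k,\sigma)=c(L,\sigma)$ via Proposition~\ref{prp-man} is clean and correct.

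The one place where your sketch is genuinely incomplete is, as you say, the interpolation step. Your two-stage Minkowski construction is morally right (support functions show the homotopy $(1-s)D'_q+sB$ is monotone when $D'_q\subset B$, and Minkowski sums of smooth strictly convex bodies stay smooth and strictly convex), but you still owe: (i) smoothness in the parameter $s$ at the endpoints where you switch regimes, so that the glued family is $C^\infty$ in $(q,t)$; and (ii) the passage from a nested family of strictly convex hypersurfaces to a fibrewise strictly convex \emph{function} with those level sets---having positively curved level sets is necessary but not sufficient for $\partial^2H_k/\partial p^2>0$, so the reparametrization in $t$ must be chosen with care. Both are standard exercises in convex geometry (one can, for instance, work with the radial or support function and interpolate there, where convexity is a linear condition), but they should be spelled out rather than asserted. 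Once that is done, the rest of your argument goes through without change.
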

From now on, we assume that $L$ is quadratic at infinity. In this case there exist positive constants $C_0$ and $C_1$ such that
\begin{equation}\label{est-quad}
C_1|v|^2\ -\ C_0\ \leq\ L(q,v)\ \leq\ C_1|v|^2\ +\ C_0\,,\quad\forall\,(q,v)\in TM\,.
\end{equation}
An analogous statement holds for the energy. 
\bigskip

\subsection{The space of loops}
We define the space of loops where the variational principle will be defined. Given $T>0$, we set
\begin{equation*}
 W^{1,2}(\R/T\Z,M)\ :=\ \Big\{\,\gamma:\R/T\Z\rightarrow M\ \Big|\ \gamma \mbox{ is absolutely continuous\,, }\int_0^T|\dot\gamma|^2\,\di t<\infty\, \Big\}\,.
\end{equation*}
Since we look for periodic orbits of arbitrary period, we want to let $T$ vary among all the positive real numbers $\R^+$. This is the same as fixing the parametrization space to $\T:=\R/\Z$ and keeping track of the period as an additional variable. Namely, we have the identification
\begin{align*}
\bigsqcup_{T>0}W^{1,2}(\R/T\Z,M)&\ \longrightarrow\ \Lambda\,:=\,W^{1,2}(\T,M)\times\R^+\\
\gamma(t)&\ \longmapsto\ \big(x(s):=\gamma(sT),T\big)\,.
\end{align*}
Given a free homotopy class $\nu\in[\T,M]$, we denote by $W^{1,2}_\nu\subset W^{1,2}(\T,M)$ and $\Lambda_\nu\subset\Lambda$ the loops belonging to such class. We use the symbol $0$ for the class of contractible loops.
\begin{prp}
The set $\Lambda$ is a Hilbert manifold with $T_{(x,T)}\Lambda\simeq T_xW^{1,2}\times\R$, where $T_xW^{1,2}\simeq W^{1,2}(\T,x^*(TM))$ is the space of absolutely continuous vector fields along $x$ with square-integrable covariant derivative. The metric on $\Lambda$ is given by $g_\Lambda=g_{W^{1,2}}+\di T^2$, where
\begin{equation*}
(g_{W^{1,2}})_x(\xi_1,\xi_2)\ :=\ \int_0^1g_{x(s)}(\xi_1(s),\xi_2(s))\,\di s\ +\ \int_0^1g_{x(s)}(\xi_1'(s),\xi_2'(s))\,\di s\,.
\end{equation*}
For any $T_->0$, $W^{1,2}\times[T_-,+\infty)\subset\Lambda$ is a complete metric space. 
\end{prp}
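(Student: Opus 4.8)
The plan is to equip $W^{1,2}(\T,M)$ with the now-standard Hilbert manifold structure of the free loop space (Palais, Klingenberg), take the product with the open half-line $\R^+$, and then verify the metric and completeness assertions directly. For the manifold structure I would build an atlas out of the Riemannian exponential map of the auxiliary metric $g$. Around a \emph{smooth} loop $x\colon\T\to M$ — and smooth loops are $C^0$-dense in $W^{1,2}(\T,M)$, so there are enough of them — consider $\mathrm{Exp}_x(\xi):=\bigl(s\mapsto \exp_{x(s)}\xi(s)\bigr)$, defined on the open set of $\xi\in W^{1,2}(\T,x^*TM)$ with $\|\xi\|_{C^0}<\mathrm{inj}(g)$; here one uses that the parametrizing manifold is one-dimensional, so $W^{1,2}(\T)\hookrightarrow C^0(\T)$ and the sup-norm is controlled by the $W^{1,2}$-norm. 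One checks that $\mathrm{Exp}_x$ is a homeomorphism onto a neighbourhood of $x$, that the transition maps are of the form $\xi\mapsto F\circ\xi$ for a smooth fibre-preserving bundle map $F$, and hence are smooth; differentiating $\mathrm{Exp}_x$ at $0$ identifies $T_xW^{1,2}$ with $W^{1,2}(\T,x^*TM)$, the absolutely continuous vector fields along $x$ with $L^2$ covariant derivative. Since $\R^+$ is open in $\R$, it follows that $\Lambda=W^{1,2}(\T,M)\times\R^+$ is a Hilbert manifold with $T_{(x,T)}\Lambda\simeq T_xW^{1,2}\times\R$.

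For the metric, $g_{W^{1,2}}$ is, fibrewise, precisely the usual $W^{1,2}$ inner product on $W^{1,2}(\T,x^*TM)$: it is positive-definite and induces the Hilbert-space topology on every tangent space. Expressed in a chart $\mathrm{Exp}_x$ its coefficients are built from $g$, $\exp$ and parallel transport along $x$, and depend smoothly on the base loop, so $g_{W^{1,2}}$ is a smooth Riemannian metric on $W^{1,2}(\T,M)$; adding the flat summand $\di T^2$ keeps it smooth and fibrewise positive-definite, yielding the claimed metric $g_\Lambda=g_{W^{1,2}}+\di T^2$ on $\Lambda$.

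For completeness I would argue as follows. Since $M$ is closed, hence complete, $(W^{1,2}(\T,M),d_{W^{1,2}})$ is a complete metric space: a $d_{W^{1,2}}$-Cauchy sequence is uniformly Cauchy by the Sobolev embedding, so converges in $C^0$ to a continuous loop $x$; the covariant derivatives, being $L^2$-bounded, converge weakly to $x'$, and the Cauchy property upgrades this to strong $L^2$-convergence, so $x\in W^{1,2}(\T,M)$ and the convergence is in $W^{1,2}$ (alternatively one simply cites Klingenberg). Now let $d_\Lambda$ be the distance induced by $g_\Lambda$. Projecting a path $u\mapsto(x(u),T(u))$ in $\Lambda$ to either factor does not increase its length, so $d_\Lambda\ge d_{W^{1,2}}$ in the loop coordinate and $d_\Lambda\ge|T_1-T_2|$ in the period coordinate; concatenating a near-geodesic in the first factor with a straight segment in the second gives the matching bound $d_\Lambda\le d_{W^{1,2}}+|T_1-T_2|$. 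Hence a $d_\Lambda$-Cauchy sequence in $W^{1,2}\times[T_-,+\infty)$ is Cauchy in each factor: its loop components converge in $W^{1,2}(\T,M)$ by completeness, and its period components converge to some $T_*\ge T_-$ because $[T_-,+\infty)$ is closed in $\R$. The limit therefore lies in $W^{1,2}\times[T_-,+\infty)$, proving completeness.

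The hard part will be the smoothness statements, namely the $\Omega$-lemma that left composition with a smooth fibre-preserving bundle map is a smooth operation $W^{1,2}(\T,E)\to W^{1,2}(\T,E')$; both the smoothness of the chart transitions and the smoothness of $g_\Lambda$ rest on it, whereas the tangent-space identification, the product structure, and the length estimates for $d_\Lambda$ are then routine. It is worth stressing that this $\Omega$-lemma is available precisely because $\dim\T=1$, so that $W^{1,2}(\T,\R^n)\subset C^0(\T,\R^n)$ and behaves like a Banach algebra under the nonlinear operations involved; for a higher-dimensional parametrizing manifold one would have to work with Sobolev sections of higher order.
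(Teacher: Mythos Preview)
Your argument is correct and follows the standard Palais--Klingenberg construction. Note, however, that the paper does not actually prove this proposition: immediately after stating it, the author writes ``For more details on the space of loops we refer to \cite[Section 2]{Abb13} and \cite{Kli78}'' and moves on. So there is no ``paper's own proof'' to compare against; what you have written is essentially an expanded version of what one finds in those references, with the completeness of the product handled explicitly via the length projections onto each factor. Your emphasis on the $\Omega$-lemma as the technical core, and on the one-dimensionality of $\T$ enabling the embedding $W^{1,2}\hookrightarrow C^0$, is exactly right.
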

\noindent For more details on the space of loops we refer to \cite[Section 2]{Abb13} and \cite{Kli78}. We end this subsection with two more definitions, which will be useful later on. First, we let
\begin{equation*}
\frac{\partial}{\partial T}\ \in\ \Gamma(\Lambda)
\end{equation*}
denote the coordinate vector associated with the variable $T$. Then, if $x\in W^{1,2}$, we let
\begin{equation*}
e(x)\ :=\ \int^1_0|x'|^2\di s\quad\quad\mbox{and}\quad\quad \ell(x)\ :=\ \int^1_0|x'|\,\di s
\end{equation*}
be the $L^2$-\textit{energy} and the \textit{length} of $x$, respectively. We define analogous quantities for $\gamma\in\Lambda$. We readily see that $\ell(x)=\ell(\gamma)$ and $e(x)=Te(\gamma)$. Moreover, $\ell(x)^2\leq e(x)$ holds.
\bigskip

\subsection{The action form}
In this subsection, for every $k\in\R$, we construct $\eta_k\in\Omega^1(\Lambda)$, which vanishes exactly at the set of periodic orbits on $\Sigma_k$. Such $1$-form will be made of two pieces: one depending only on $L$ and $k$ and one depending only on $\sigma$. The first piece will be the differential of the function
\begin{align*}
A_k:\Lambda&\longrightarrow \R\\
\gamma&\longmapsto \int_0^T\Big[L(\gamma,\dot\gamma)+k\Big]\,\di t\ =\ T\cdot\int_0^1\left[L\left(x,\frac{x'}{T}\right)+k\right]\di s\,.
\end{align*}
Such function is well-defined since $L$ is quadratic at infinity (see \eqref{est-quad}). It was proven in \cite{AS09} that $A_k$ is a $C^{1,1}$ function (namely, $A_k$ is differentiable and its differential is locally uniformly Lipschitz-continuous).

In order to define the part of $\eta_k$ depending on $\sigma$, we first introduce a differential form $\tau^\sigma\in\Omega^1(W^{1,2})$ called the \textit{transgression} of $\sigma$. It is given by
\begin{equation*}
\tau^\sigma_x(\xi)\ :=\ \int_0^1\sigma_{x(s)}(\xi(s),x'(s))\,\di s\,,\quad\forall\,(x,\xi)\in TW^{1,2}\,.
\end{equation*}
By writing $\tau^\sigma$ in local coordinates, it follows that it is locally uniformly Lipschitz.

If $u:[0,1]\rightarrow W^{1,2}$ is a path of class $C^1$, then
\begin{equation}
\int_0^1u^*\tau^\sigma\ =\ \int_{[0,1]\times\T}\hat u^*\sigma\,,
\end{equation}
where $\hat u:[0,1]\times\T\rightarrow M$ is the cylinder given by $\hat u(r,t)=u(r)(t)$. If $u_a:\T\rightarrow W^{1,2}$ is a homotopy of closed paths with parameter $a\in[0,1]$, then we get a corresponding homotopy of tori $\hat u_a$. Since $\sigma$ is closed, the integral of $\hat u_a^*\sigma$ on $\T^2$ is independent of $a$. We conclude that the integral of $\tau^\sigma$ on $u_a$ does not depend on $a$ either. Namely, $\tau^\sigma$ is a \textit{closed form}.

\begin{dfn}
The $\mathsf{free\ period\ action\ form}$ at energy $k$ is $\eta_k\in\Omega^1(\Lambda)$ defined as
\begin{equation}
\eta_k\,:=\ dA_k\ -\ \op{pr}^*_{W^{1,2}}\tau^\sigma\,,
\end{equation}
where $\op{pr}^*_{W^{1,2}}:\Lambda\rightarrow W^{1,2}$ is the natural projection $(x,T)\mapsto x$.
\end{dfn}
\begin{prp}
The free period action form is closed and its zeros correspond to the periodic orbits of $\Phi^{(L,\sigma)}$ on $\Sigma_k$.
\end{prp}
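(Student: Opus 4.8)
The plan is to prove the two assertions separately. For closedness, I would argue that $\eta_k$ is manifestly a sum of two closed forms: $dA_k$ is exact, hence closed (with the caveat that $A_k$ is only $C^{1,1}$, so ``closed'' here means that the line integral of $dA_k$ along any loop of paths vanishes, which holds since $A_k$ is a well-defined single-valued function), and $\op{pr}^*_{W^{1,2}}\tau^\sigma$ is closed because $\tau^\sigma$ is closed on $W^{1,2}$ — this was established in the excerpt via the homotopy invariance of $\int_{\T^2}\hat u_a^*\sigma$ — and pull-back commutes with the exterior derivative (again, working with the integral characterization of closedness appropriate for this low-regularity setting). So closedness is essentially a formal consequence of what has already been proven.

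The substantive part is identifying the zero set of $\eta_k$. First I would compute the differential $dA_k$ at a point $\gamma=(x,T)\in\Lambda$. Writing a variation as $(\xi,h)\in T_xW^{1,2}\times\R$, one gets, after integrating by parts in the $s$-variable and rescaling back to the interval $[0,T]$ with $\gamma(t)=x(t/T)$,
\begin{equation*}
dA_k(\gamma)[\xi,h]\ =\ \int_0^T\left[\frac{\partial L}{\partial v}(\gamma,\dot\gamma)\!\cdot\!\left(\nabla_{\dot\gamma}\hat\xi\right)\ +\ \frac{\partial L}{\partial q}(\gamma,\dot\gamma)\!\cdot\!\hat\xi\right]\di t\ +\ h\!\int_0^1\!\Big[L\Big(x,\tfrac{x'}{T}\Big)+k\ -\ \tfrac{x'}{T}\!\cdot\!\tfrac{\partial L}{\partial v}\Big]\di s,
\end{equation*}
where $\hat\xi$ is the corresponding vector field along $\gamma$; the coefficient of $h$ is recognized, using the formula $E(q,v)=\frac{\partial L}{\partial v}(q,v)(v)-L(q,v)$ from the Proposition on energy, as $\frac{1}{T}\int_0^T[k-E(\gamma,\dot\gamma)]\,\di t$. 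Meanwhile the pulled-back transgression contributes $-\int_0^1\sigma_{x}(\xi,x')\,\di s=-\int_0^T\sigma_\gamma(\hat\xi,\dot\gamma)\,\di t$ to the $\xi$-component and nothing to the $h$-component (since $\op{pr}_{W^{1,2}}$ kills $\partial/\partial T$).

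Now $\eta_k(\gamma)=0$ means this expression vanishes for all $(\xi,h)$. Taking $\xi=0$ and $h\neq 0$ forces $\int_0^T E(\gamma,\dot\gamma)\,\di t=kT$; but $E$ is an integral of motion only on solutions, so at this stage one only gets the average. Taking $h=0$ and letting $\xi$ range over all vector fields along $x$, the resulting identity $\int_0^T[\frac{\partial L}{\partial v}\cdot\nabla_{\dot\gamma}\hat\xi+\frac{\partial L}{\partial q}\cdot\hat\xi+\sigma_\gamma(\dot\gamma,\hat\xi)]\,\di t=0$ is, by the fundamental lemma of the calculus of variations applied in local charts with a symmetric connection, exactly the weak form of the twisted Euler--Lagrange equation of Exercise \ref{exe-EL}; a regularity bootstrap (the Legendre transform is a diffeomorphism, so one can solve for $\dot\gamma$ and iterate) upgrades the weak solution $\gamma$ to a smooth one, hence $(\gamma,\dot\gamma)$ is a genuine flow line of $X_{(L,\sigma)}$. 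For such a flow line $E(\gamma,\dot\gamma)$ is constant, so the averaged condition $\int_0^T E\,\di t=kT$ becomes $E\equiv k$, i.e. $(\gamma,\dot\gamma)\subset\Sigma_k$. Conversely, any periodic orbit on $\Sigma_k$ plainly satisfies both conditions, giving a zero of $\eta_k$. The bijection is then immediate. I expect the main obstacle to be bookkeeping rather than conceptual: carefully justifying the integration by parts and the ``fundamental lemma'' step in the $W^{1,2}$/$C^{1,1}$ setting where one does not have classical smoothness a priori, and making the local-chart-with-connection computation match the intrinsic formula in Exercise \ref{exe-EL}; the regularity bootstrap from weak to smooth solution, while standard for Tonelli systems, is the place where one must be slightly careful.
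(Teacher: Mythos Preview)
Your proposal is correct and follows essentially the same route as the paper: closedness is immediate from the closedness of $\tau^\sigma$ already established, and the identification of zeros proceeds by evaluating $\eta_k$ separately on $T_xW^{1,2}\times\{0\}$ and on $\tfrac{\partial}{\partial T}$, obtaining the weak twisted Euler--Lagrange equations from the former and the energy constraint from the latter. The paper's own argument is much terser---it simply records the two explicit formulas for $(\eta_k)_\gamma(\xi,0)$ and $(\eta_k)_\gamma(\tfrac{\partial}{\partial T})$ and leaves the regularity bootstrap and the passage from ``average energy equals $k$'' to ``energy identically $k$'' implicit---so your write-up is in fact a fleshed-out version of what the paper only sketches.
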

The correspondence with periodic orbits follows by computing $\eta_k$ explicitly on $TW^{1,2}\times 0$ and on $\frac{\partial}{\partial T}$. If $\xi\in TW^{1,2}$, then
\begin{equation}
(\eta_k)_\gamma(\xi,0)\ =\ \int_0^T\Big[\frac{\partial L}{\partial q}(\gamma,\dot\gamma)\cdot\xi_T\, +\,\frac{\partial L}{\partial v}(\gamma,\dot\gamma)\cdot\dot\xi_T\,+\,\sigma_\gamma(\dot\gamma,\xi_T)\Big]\di t\,,
\end{equation}
where $\xi_T$ is the reparametrization of $\xi$ on $\R/T\Z$. In the direction of the period we have
\begin{align}
\nonumber (\eta_k)_\gamma\left(\frac{\partial}{\partial T}\right)\ =\ d_\gamma A_k\left(\frac{\partial}{\partial T}\right)&\ =\ \int_0^1L\left(x,\frac{x'}{T}\right)\di s\ +\ k\ -\ T\cdot\int_0^1\frac{\partial L}{\partial v}\left(x,\frac{x'}{T}\right)\cdot \frac{x'}{T^2}\,\di s\\
&\ =\ k-\int_0^1E\left(x,\frac{x'}{T}\right)\di s\label{etakper}\\
\nonumber&\ =\ k-\frac{1}{T}\int_0^TE(\gamma,\dot\gamma)\,\di t\,.
\end{align}
\bigskip

\subsection{Vanishing sequences}
Our strategy to prove existence of periodic orbits will be to construct zeros of $\eta_k$ by approximation.
\begin{dfn}
Let $\nu\in[\T,M]$ be a free homotopy class. A sequence $(\gamma_m)\subset\Lambda_\nu$ is called a $\mathsf{vanishing\ sequence}$ (at level $k$), if
 \begin{equation*}
  \lim_{m\rightarrow\infty}\left|\eta_k\right|_{\gamma_m}\ =\ 0\,.
 \end{equation*}
\end{dfn}
\noindent A limit point of a vanishing sequence is a zero of $\eta_k$. Thus, the crucial question is: when does a vanishing sequence admit limit points? Clearly, if $T_m\rightarrow 0$ or $T_m\rightarrow+\infty$ the set of limit points is empty. We now see that the opposite implication also holds.
\begin{lem}\label{lem:van-bou}
If $(\gamma_m)$ is a vanishing sequence, there exists $C>0$ such that
\begin{equation}\label{eq-et}
e(x_m)\ \leq\ C\cdot T_m^2\,.
\end{equation}
\end{lem}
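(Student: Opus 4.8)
The plan is to extract the bound from the vanishing condition evaluated on the coordinate vector $\partial/\partial T$, using the quadratic estimate \eqref{est-quad} for the energy together with the elementary inequality $\ell(x)^2\le e(x)$. The point is that $|\eta_k|_{\gamma_m}\to 0$ forces in particular $(\eta_k)_{\gamma_m}(\partial/\partial T)\to 0$ (since $\partial/\partial T$ has unit $g_\Lambda$-norm), and by \eqref{etakper} this says that the average of $E$ along $\gamma_m$ converges to $k$. First I would write, using the energy analogue of \eqref{est-quad}, that there are constants $C_1',C_0'>0$ with $E(q,v)\ge C_1'|v|^2-C_0'$ for all $(q,v)$. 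Applied to $(x_m,x_m'/T_m)$ and integrated in $s$, this gives
\begin{equation*}
\int_0^1 E\Big(x_m,\frac{x_m'}{T_m}\Big)\di s\ \ge\ \frac{C_1'}{T_m^2}\,e(x_m)\ -\ C_0'\,.
\end{equation*}

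Next I would combine this with \eqref{etakper}: since $(\eta_k)_{\gamma_m}(\partial/\partial T)=k-\int_0^1 E(x_m,x_m'/T_m)\,\di s$ and the left-hand side is bounded in absolute value by $|\eta_k|_{\gamma_m}\cdot\|\partial/\partial T\|_{g_\Lambda}=|\eta_k|_{\gamma_m}$, which tends to $0$ and is in particular bounded by some constant $D$, we get
\begin{equation*}
\int_0^1 E\Big(x_m,\frac{x_m'}{T_m}\Big)\di s\ \le\ k+D\,.
\end{equation*}
Chaining the two displays yields $\frac{C_1'}{T_m^2}e(x_m)\le k+D+C_0'$, i.e. $e(x_m)\le C\cdot T_m^2$ with $C:=(k+D+C_0')/C_1'$, which is \eqref{eq-et}. (If one prefers, $D$ can be taken to be $1$ for $m$ large and the finitely many remaining terms absorbed into $C$.)

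I do not expect a serious obstacle here; the only points requiring a little care are: (i) justifying that $\|\partial/\partial T\|_{g_\Lambda}=1$, which is immediate from $g_\Lambda=g_{W^{1,2}}+\di T^2$; (ii) making sure the energy version of the quadratic bound \eqref{est-quad} is available — this is exactly the "analogous statement holds for the energy" remark following \eqref{est-quad}, and follows from the Legendre duality $E(q,v)=\frac{\partial L}{\partial v}(q,v)(v)-L(q,v)$ together with superlinearity; and (iii) handling the possibility that $|\eta_k|_{\gamma_m}$ is not yet small for small $m$ — harmless, since enlarging $C$ absorbs any finite initial segment. No compactness or limiting argument is needed: the estimate is a direct consequence of the formula for $\eta_k$ in the period direction.
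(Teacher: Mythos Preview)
Your proof is correct and follows essentially the same route as the paper: both use the quadratic lower bound on $E$ from \eqref{est-quad}, the formula \eqref{etakper} for $\eta_k(\partial/\partial T)$, the fact that $|\partial/\partial T|=1$, and the observation that a vanishing sequence has $|\eta_k|_{\gamma_m}$ bounded. (The inequality $\ell(x)^2\le e(x)$ you mention in the plan is not actually needed, and indeed you do not use it.)
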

\begin{proof}
We compute 
\begin{equation*}
C_1\cdot\frac{e(x_m)}{T_m^2}-C_0\ \stackrel{\mbox{}^{(\star)}}{\leq}\ \int_0^1E\left(x_m,\frac{x_m'}{T_m}\right)\di s\ =\ k-\eta^k_{\gamma_m}\left(\frac{\partial}{\partial T}\right)\ \stackrel{\mbox{}^{(\star\star)}}{\leq}\ k+\sup_m|\eta_k|_{\gamma_m}\,.
\end{equation*}
where in $(\star)$ we used \eqref{est-quad} applied to $E$, and in $(\star\star)$ we used that
\begin{equation*}
\left|\frac{\partial}{\partial T}\right|\ =\ 1\,.
\end{equation*}
The desired estimate follows by observing that, since the sequence $\big(\,|\eta_k|_{\gamma_m}\big)\subset[0,+\infty)$ is infinitesimal, it is also bounded from above. 
\end{proof}
\begin{prp}\label{prp-conv}
If $(\gamma_m)$ is a vanishing sequence and $0<T_-\leq T_m\leq T_+<+\infty$ for some $T_-$ and $T_+$, then $(\gamma_m)$ has a limit point.
\end{prp}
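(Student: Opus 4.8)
\textit{Setup and first reductions.} The plan is to extract a convergent subsequence of $(\gamma_m)=(x_m,T_m)$. By Lemma~\ref{lem:van-bou} and the hypothesis $T_m\le T_+$, the $L^2$-energies $e(x_m)\le C\,T_+^2$ are uniformly bounded. Since $d(x_m(s_1),x_m(s_2))\le |s_1-s_2|^{1/2}e(x_m)^{1/2}$, the loops $x_m$ are uniformly $1/2$-H\"older, hence equicontinuous, and by Arzel\`a--Ascoli we may pass to a subsequence with $x_m\to x_\infty$ uniformly; being bounded in $W^{1,2}$, the same subsequence satisfies $x_m\rightharpoonup x_\infty$ weakly in $W^{1,2}$, with $x_\infty\in W^{1,2}(\T,M)$ lying in the class $\nu$ (free homotopy classes are open in $W^{1,2}(\T,M)$, so uniform closeness forces $x_\infty\in W^{1,2}_\nu$). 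Finally $T_m\to T_\infty\in[T_-,T_+]\subset\R^+$ along a further subsequence. The candidate limit is $\gamma_\infty:=(x_\infty,T_\infty)\in\Lambda_\nu$, and by continuity of $\eta_k$ it will automatically be a zero of $\eta_k$; what remains is to upgrade $x_m\to x_\infty$ to \emph{strong} convergence in $W^{1,2}$.

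\textit{The variational step.} Since $x_m$ is $C^0$-close to $x_\infty$, write $x_m=\exp_{x_\infty}\zeta_m$ in the exponential chart of the loop space, with $\zeta_m\in W^{1,2}(\T,x_\infty^*TM)$ satisfying $\zeta_m\to 0$ in $C^0$ and $\zeta_m\rightharpoonup 0$ in $W^{1,2}$. Let $\xi_m\in T_{\gamma_m}\Lambda$ be the variation obtained by differentiating $\exp$ in the direction $\zeta_m$, so that $\|\xi_m\|_{W^{1,2}}$ is comparable to $\|\zeta_m\|_{W^{1,2}}$ (hence bounded) and $|\xi_m|_{C^0}\to 0$. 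Because $|\eta_k|_{\gamma_m}\to 0$ and $\|(\xi_m,0)\|$ stays bounded, we get $(\eta_k)_{\gamma_m}(\xi_m,0)\to 0$. Expanding this pairing by the formula
\begin{equation*}
(\eta_k)_{\gamma_m}(\xi_m,0)=\int_0^{T_m}\Big[\tfrac{\partial L}{\partial q}(\gamma_m,\dot\gamma_m)\cdot(\xi_m)_{T_m}+\tfrac{\partial L}{\partial v}(\gamma_m,\dot\gamma_m)\cdot\dot{(\xi_m)}_{T_m}+\sigma_{\gamma_m}(\dot\gamma_m,(\xi_m)_{T_m})\Big]\di t,
\end{equation*}
the first and third integrands have coefficients that are bounded in $L^1$ uniformly in $m$ (using the energy bound, $0<T_-\le T_m\le T_+$, and the quadratic growth \eqref{est-quad}), so these two terms vanish in the limit because $|\xi_m|_{C^0}\to 0$. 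Consequently $\int_0^{T_m}\tfrac{\partial L}{\partial v}(\gamma_m,\dot\gamma_m)\cdot\dot{(\xi_m)}_{T_m}\,\di t\to 0$.

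\textit{Closing via uniform convexity.} Because $L$ is quadratic at infinity, $\tfrac{\partial^2 L}{\partial v^2}\ge\delta\,g$ on all of $TM$ for some $\delta>0$ (strict positivity on the relevant compact set, equality to a fixed metric outside). Using the monotonicity inequality $\big(\tfrac{\partial L}{\partial v}(q,v)-\tfrac{\partial L}{\partial v}(q,w)\big)(v-w)\ge\delta|v-w|^2$ with $v=\dot\gamma_m$ and $w=w_m$ the velocity of $x_\infty$ parallel-transported along the geodesics defining $\xi_m$ (so that $v-w$ is, up to controlled errors, $\dot{(\xi_m)}_{T_m}$), one bounds $\delta\int|\nabla(\xi_m)_{T_m}|^2\,\di t$ from above by $\int\tfrac{\partial L}{\partial v}(\gamma_m,\dot\gamma_m)\cdot\dot{(\xi_m)}_{T_m}-\int\tfrac{\partial L}{\partial v}(\gamma_m,w_m)\cdot\dot{(\xi_m)}_{T_m}$. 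The first term tends to $0$ by the previous step; the second tends to $0$ because $\tfrac{\partial L}{\partial v}(\gamma_m,w_m)\to\tfrac{\partial L}{\partial v}(x_\infty,\dot x_\infty)$ strongly in $L^2$ (dominated convergence, using $x_m\to x_\infty$ in $C^0$ and $T_m\to T_\infty$) while $\dot{(\xi_m)}_{T_m}\rightharpoonup 0$ weakly in $L^2$. Hence $\|\nabla\xi_m\|_{L^2}\to 0$, and together with $|\xi_m|_{C^0}\to 0$ this gives $\xi_m\to 0$ in $W^{1,2}$, i.e.\ $x_m\to x_\infty$ strongly in $W^{1,2}$. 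Combined with $T_m\to T_\infty$ we conclude $\gamma_m\to\gamma_\infty$ in $\Lambda$.

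\textit{Main obstacle.} The only genuinely delicate point is the bookkeeping in the nonlinear exponential chart of $\Lambda$: checking that weak $W^{1,2}$-convergence of $x_m$ passes to $\zeta_m$, selecting the comparison field $w_m$ so that \emph{both} the uniform convexity estimate and the strong $L^2$-convergence $\tfrac{\partial L}{\partial v}(\gamma_m,w_m)\to\tfrac{\partial L}{\partial v}(x_\infty,\dot x_\infty)$ hold simultaneously, and absorbing the reparametrization errors coming from $T_m\ne T_\infty$ into negligible terms. This is a standard Palais--Smale-type compactness argument adapted to the free-period loop space; once these estimates are in place the conclusion is immediate.
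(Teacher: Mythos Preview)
Your argument is correct and follows essentially the same route as the paper: bound the $L^2$-energies via Lemma~\ref{lem:van-bou}, extract a uniform (hence weak $W^{1,2}$) limit by Arzel\`a--Ascoli, pass to a local chart around $x_\infty$, and then run the standard Palais--Smale compactness estimate using uniform fibrewise convexity of $L$. The paper's proof is terser at the last stage---it observes that in such a chart $\eta_k$ becomes the differential of an ordinary (time-dependent) action and invokes \cite[Lemma~5.3]{Abb13} and \cite{AB15b} for the convexity-based upgrade from weak to strong convergence---whereas you have written that step out explicitly.
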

\begin{proof}
By compactness of $[T_-,T_+]$, up to subsequences, $T_m\rightarrow T_\infty>0$. By \eqref{eq-et}, the $L^2$-energy of $x_m$ is uniformly bounded. Thus, $(x_m)$ is uniformly $1/2$-H\"older continuous. By the Arzel\`a-Ascoli theorem, up to subsequences, $(x_m)$ converges uniformly to a continuous $x_\infty:\T\rightarrow M$. Therefore, $x_m$ eventually belongs to a local chart $\mathcal U$ of $W^{1,2}$. In $\mathcal U$, $\eta_k$ can be written as the differential of a standard action functional depending on time (see \cite{AB15b}) and the same argument contained in \cite[Lemma 5.3]{Abb13} when $\sigma=0$ implies that $(\gamma_m)$ has a limit point.
\end{proof}

In order to construct vanishing sequences we will exploit some geometric properties of $\eta_k$. One of the main ingredients to achieve this goal will be to define a vector field on $\Lambda$ generalizing the negative gradient vector field of the function $A_k$. We introduce it in the next subsection.

\subsection{The flow of steepest descent}
Let $X_k$ denote the vector field on $\Lambda$ defined by
\begin{equation*}
X_k\,:=\ -\,\frac{\sharp\,\eta_k}{\sqrt{1+|\eta_k|^2}}\,\,
\end{equation*}
where $\sharp$ denote the duality between $1$-forms and vector fields induced by $g_{\Lambda}$. Since $X_k$ is locally uniformly Lipschitz, it gives rise to a flow which we denote by $r\mapsto\Phi^k_r$. For every $\gamma\in\Lambda$, we denote by $u_\gamma:[0,R_\gamma)\rightarrow\Lambda$ the maximal positive flow line starting at $\gamma$. We say that $\Phi^k$ is \textit{positively complete} on a subset $Y\subset \Lambda$ if, for all $\gamma\in\Lambda$, either $R_\gamma=+\infty$ or there exists $R_{\gamma,Y}\in[0,R_\gamma)$ such that $u_\gamma(R_{\gamma,Y})\notin Y$.

Except for the scaling factor $1/\sqrt{1+|\eta_k|^2}$, the vector field $X_k$ is the natural generalization of $-\nabla A_k=-\sharp(dA_k)$ to the case of non-vanishing magnetic form. We introduce such scaling so that $|X_k|\leq 1$ and we can give the following characterization of the flow lines $u_\gamma$ with $R_\gamma<+\infty$.

\begin{prp}
Let $u:[0,R)\rightarrow\Lambda$ be a maximal positive flow line of $X_k$ and for all $r\in[0,R)$ set $u(r):= \gamma(r)=(x(r),T(r))$. If $R<+\infty$, then there exists a sequence $(r_m)_{m\in\N}\subset[0,R)$ and a constant $C$ such that
\begin{equation}
\lim_{m\rightarrow\infty}r_m=R\,,\quad\ \ \lim_{m\rightarrow\infty}T(r_m)=0\,,\quad\ \ e(x(r_m))\ \leq\ C\cdot T(r_m)^2\,,\ \ \forall\,m\in\N\,.
\end{equation}
\end{prp}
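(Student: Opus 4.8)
The plan is to analyze what can prevent a maximal flow line $u:[0,R)\to\Lambda$ from extending past $R$ when $R<+\infty$, and to show the only obstruction is the period collapsing to zero together with the energy bound \eqref{eq-et}. First I would observe that since $|X_k|\leq 1$, the path $r\mapsto\gamma(r)$ is $1$-Lipschitz in $(\Lambda,g_\Lambda)$; in particular $r\mapsto T(r)$ is $1$-Lipschitz on $[0,R)$, hence extends continuously to $r=R$, and the limit $T(R):=\lim_{r\to R}T(r)$ exists and is $\geq 0$. I claim $T(R)=0$. Suppose not, so $T(r)\geq T_-$ for some $T_->0$ and all $r$ close to $R$. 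Then the flow line eventually lies in the complete metric space $W^{1,2}\times[T_-,+\infty)$ (the completeness is the last assertion of the Hilbert-manifold proposition), and being $1$-Lipschitz and defined on the bounded interval $[0,R)$ it is a Cauchy path there, hence converges to a point $\gamma_\infty\in W^{1,2}\times[T_-,+\infty)\subset\Lambda$. Since $X_k$ is locally Lipschitz, the flow can be continued past $\gamma_\infty$, contradicting maximality of $R$. This forces $\lim_{r\to R}T(r)=0$, so any sequence $r_m\to R$ already satisfies $T(r_m)\to 0$.

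It remains to produce the energy estimate $e(x(r_m))\leq C\cdot T(r_m)^2$ along some sequence $r_m\to R$. The idea is to reuse the computation in Lemma \ref{lem:van-bou}: that estimate only used a pointwise bound on $|\eta_k|_\gamma$, via
\begin{equation*}
C_1\cdot\frac{e(x)}{T^2}-C_0\ \leq\ k-\eta_k{}_{\gamma}\Big(\frac{\partial}{\partial T}\Big)\ \leq\ k+|\eta_k|_{\gamma}\,.
\end{equation*}
So it suffices to show $\liminf_{r\to R}|\eta_k|_{u(r)}<+\infty$; then picking $r_m\to R$ along which $|\eta_k|_{u(r_m)}$ stays bounded by some constant $K$ yields $e(x(r_m))\leq \tfrac{1}{C_1}(k+K+C_0)\,T(r_m)^2$, which is the claim with $C=\tfrac{1}{C_1}(k+K+C_0)$. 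To get the liminf bound I would argue by contradiction: if $|\eta_k|_{u(r)}\to+\infty$ as $r\to R$, then along the flow line
\begin{equation*}
\frac{\di}{\di r}A_k^{\mathrm{loc}}\ =\ \eta_k(X_k)\ =\ -\,\frac{|\eta_k|^2}{\sqrt{1+|\eta_k|^2}}\,,
\end{equation*}
whose absolute value behaves like $|\eta_k|$ and hence tends to $+\infty$; but this is a derivative along a path of finite length $R-r_0$, and one must check it does not force a contradiction directly — rather, the cleaner route is: the total variation $\int_0^R|\eta_k|_{u(r)}\,\di r$ controls the drop of the ``action minus the period primitive'' along the line, and since $\int_0^R|X_k|\,\di r\leq R<\infty$ the line has finite length, so it cannot wander off to infinity in $W^{1,2}$ unless $T\to0$; combined with $T(R)=0$ one extracts a subsequence along which $|\eta_k|$ is bounded by monotonicity/mean-value considerations on the interval. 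Concretely, since $r\mapsto A_k(u(r))$ is nonincreasing and $T(r)\to 0$, the inequality displayed above shows $e(x(r))/T(r)^2$ is bounded on any subsequence where $A_k(u(r))$ stays bounded below; choosing $r_m$ realizing $\liminf |\eta_k|$ does the job.

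The main obstacle I expect is the last step: ruling out $|\eta_k|_{u(r)}\to+\infty$ as $r\to R$, i.e.\ showing a vanishing-type bound holds along at least a subsequence. The honest way is to combine three facts — (i) $r\mapsto A_k(u(r))$ is monotone nonincreasing because $\di A_k(X_k)\leq 0$ on $TW^{1,2}\times 0$ while the $\partial/\partial T$–component is handled by the $\op{pr}^*\tau^\sigma$ term being closed, so one works with a local primitive of $\eta_k$; (ii) $A_k(u(r))$ is bounded below because $e(x(r))\leq C\,T(r)^2$ would follow if only we knew $|\eta_k|$ bounded — so one must instead bound $A_k$ from below directly using \eqref{est-quad}, getting $A_k(\gamma)\geq (C_1 e(x)/T - C_0 T)+kT$, which is bounded below as $T\to 0$ iff $e(x)/T$ does not blow up; and (iii) feeding this back, the set of $r$ with $|\eta_k|_{u(r)}$ large must have small measure near $R$, so a subsequence with bounded $|\eta_k|$ exists. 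Managing this circular-looking dependence cleanly — likely by the same local-chart reduction to a time-dependent action functional used in the proof of Proposition \ref{prp-conv} and the cited \cite[Lemma 5.3]{Abb13}, applied on short sub-intervals — is where the real work lies.
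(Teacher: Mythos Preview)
Your first half is fine and essentially the same as the paper's: use $|X_k|\leq 1$ to get a Cauchy/uniformly continuous path, land in the complete region $W^{1,2}\times[T_-,+\infty)$ if $T$ stays bounded below, and contradict maximality via local existence for the ODE. (You even conclude slightly more than the paper, namely $\lim_{r\to R}T(r)=0$ rather than only $\inf T=0$; both suffice.)

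The genuine gap is in your derivation of the energy estimate. You try to show $\liminf_{r\to R}|\eta_k|_{u(r)}<+\infty$ and then recycle the vanishing-sequence computation, but you never close that argument --- the discussion of monotonicity of $A_k$, local primitives, and ``circular-looking dependence'' does not produce a bound, and there is no reason $|\eta_k|_{u(r)}$ should stay bounded along any subsequence (a maximal flow line escaping to the boundary of $\Lambda$ can perfectly well have $|\eta_k|\to\infty$). The missing idea is that you do not need a bound on $|\eta_k|$ at all: you only need a \emph{sign} on the single component $\eta_k\!\left(\frac{\partial}{\partial T}\right)$. Since $T(r)\to 0$, you can choose the sequence $r_m\to R$ so that in addition $\frac{dT}{dr}(r_m)\leq 0$ (e.g.\ points where $T$ is decreasing toward its infimum). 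But
\[
\frac{dT}{dr}\ =\ d_{u(r)}T(X_k)\ =\ -\,\frac{\eta_k\!\left(\frac{\partial}{\partial T}\right)}{\sqrt{1+|\eta_k|^2}}(u(r))\,,
\]
so $\frac{dT}{dr}(r_m)\leq 0$ forces $(\eta_k)_{u(r_m)}\!\left(\frac{\partial}{\partial T}\right)\geq 0$. Plugging this sign into \eqref{etakper} and the quadratic lower bound on $E$ from \eqref{est-quad} gives
\[
0\ \leq\ k-\int_0^1E\!\left(x(r_m),\tfrac{x'(r_m)}{T(r_m)}\right)\di s\ \leq\ k-C_1\,\frac{e(x(r_m))}{T(r_m)^2}+C_0\,,
\]
hence $e(x(r_m))\leq C\,T(r_m)^2$ with $C=(k+C_0)/C_1$. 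This is the paper's argument; the choice of $r_m$ with $\frac{dT}{dr}(r_m)\leq 0$ is the device that replaces your attempted bound on $|\eta_k|$.
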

\begin{proof}
By contradiction, we suppose that $0<T_-:=\inf_{[0,R)} T(r)$. Since $|X_k|\leq1$, $u_\gamma$ is uniformly continuous and, by the completeness of $W^{1,2}\times[T_-,+\infty)$, there exists
\begin{equation*}
\gamma_\infty\,:=\ \lim_{r\rightarrow R}u(r)\,.
\end{equation*}
By the existence theorem of solutions of ODE's, there exists a neighbourhood $\mathcal B$ of $\gamma_\infty$ and $R_\mathcal B>0$ such that
\begin{equation*}
\forall\,\gamma\in \mathcal B\,,\quad r\longmapsto\Phi^{k}_r(\gamma)\, \mbox{ exists in }[0,R_\mathcal B]\,.
\end{equation*}
This contradicts the fact that $R$ is finite as soon as $r\in[0,R)$ is such that $\gamma(r)\in \mathcal B$ and $R-r<R_\mathcal B$. Therefore, $\inf T=0$. Hence, we find a sequence $r_m\rightarrow R$ such that $T(r_m)\rightarrow 0$ and, for every $m$, $\frac{dT}{dr}(r_m)\leq 0$. The last property implies that
\begin{equation}
0\ \geq\ \frac{dT}{dr}(r_m)\ =\ d_{u(r_m)}T(X_k)\ =\ -\frac{\eta_k\left(\frac{\partial}{\partial T}\right)}{\sqrt{1+|\eta_k|^2}}(u(r_m))\,.
\end{equation}
Finally, using Equation \eqref{etakper} and the estimates in \eqref{est-quad}, we have
\begin{equation*}
0\leq(\eta_k)_{u(r(m))}\left(\frac{\partial}{\partial T}\right)=k-\int_0^1E\left(x(r_m),\frac{x'(r_m)}{T(r_m)}\right)\di s\leq k-C_1\int_0^1\frac{|x'(r_m)|^2}{T(r_m)^2}\di s+C_0\,.\qedhere
\end{equation*}
\end{proof}

The above proposition shows that flow lines whose interval of definition is finite come closer and closer to the subset of constant loops. As we saw in Lemma \ref{lem:van-bou} the same is true for vanishing sequences with infinitesimal period. For these reasons in the next subsection we study the behaviour of the action form on the set of loops with short length.

\subsection{The subset of short loops}
We now define a local primitive for $\eta_k$ close to the subset of constant loops. For $k>e_0(L)$, such primitive will enjoy some properties that will enable us to apply the minimax theorem of Section \ref{sec:min} to prove the Main Theorem. For our arguments we will need estimates which hold uniformly on a compact interval of energies. Hence, for the rest of this subsection we will suppose that a compact interval $I\subset (e_0(L),+\infty)$ is fixed. 

Let $M_0\subset W^{1,2}_0$ be the constant loops parametrized by $\T$ and $M_0\times\R^+\subset \Lambda_0$ the constant loops with arbitrary period. We readily see that $\tau^\sigma|_{M_0}=0$. Thus, $\eta_k=dA_k|_{M_0\times\R^+}$ and
\begin{equation}\label{action-m0}
A_k(x,T)\ =\ T\,(k-V(x))\,,\quad\forall\,(x,T)\in M_0\times\R^+\,. 
\end{equation}
Now that we have described $\eta_k$ on constant loops, let us see what happens nearby. First, we need the following lemma.
\begin{lem}\label{def-ret}
There exists $\delta_*>0$ such that $\{\ell<\delta\}\subset W^{1,2}$ retracts with deformation on $M_0$, for all $\delta\leq\delta_*$. Thus, we have $\tau^\sigma|_{\{\ell<\delta_*\}}\, =\, dP^\sigma$, where
\begin{equation}
\begin{aligned}
P^\sigma:\{\ell<\delta_*\}&\ \longrightarrow\ \R\\
x&\ \longmapsto\ \int_{B^2}\hat u_x^*\sigma\,,
\end{aligned}
\end{equation}
where $\hat u_x:B^2\rightarrow M$ is the disc traced by $x$ under the action of the deformation retraction. Furthermore, there exists $C>0$ such that
\begin{equation}\label{psigma}
|P^\sigma(x)|\ \leq\ C\cdot \ell(x)^2\,.
\end{equation}
\end{lem}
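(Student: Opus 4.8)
The proof splits into three independent pieces: (i) constructing the deformation retraction of the short-loop region onto the constant loops, (ii) deducing that $\tau^\sigma$ is exact there and identifying the primitive $P^\sigma$ via Stokes, and (iii) establishing the quadratic bound \eqref{psigma}. For (i), the idea is to exploit the auxiliary metric $g$: there is $\rho>0$ (an injectivity-radius/convexity radius bound for $g$, which exists since $M$ is closed) such that any loop $x$ with $\ell(x)<\rho$ has image contained in a geodesically convex ball $B(x(0),\rho)$. On such a ball the geodesic homotopy $h_a(s):=\exp_{b(x)}\!\big(a\,\exp_{b(x)}^{-1}x(s)\big)$, $a\in[0,1]$, where $b(x)$ is a suitably chosen center (e.g. $x(0)$, or the center of mass), contracts $x$ to the constant loop at $b(x)$; one checks this is continuous in the $W^{1,2}$-topology (smoothness of $\exp$ plus the chain rule give control of the covariant derivative), fixes $M_0$ pointwise, and stays inside $\{\ell<\delta_*\}$ provided $\delta_*$ is small — note $\ell(h_a(x))\le \ell(x)$ up to a uniform constant coming from the derivative bounds on $\exp$ over the relevant compact set. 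This produces $\delta_*>0$ and the deformation retraction for all $\delta\le\delta_*$.

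For (ii): since $\tau^\sigma$ is closed (established earlier in the excerpt) and $\{\ell<\delta_*\}$ deformation-retracts onto $M_0$, on which $\tau^\sigma$ restricts to $0$, the form $\tau^\sigma$ is exact on $\{\ell<\delta_*\}$ with an explicit primitive obtained by integrating along the retraction: $P^\sigma(x):=\int_0^1 (H_x)^*\tau^\sigma$ where $H_x(a):=h_{1-a}(x)$ runs from the constant loop to $x$. Applying the identity $\int_0^1 u^*\tau^\sigma=\int_{[0,1]\times\T}\hat u^*\sigma$ from the excerpt, this integral equals $\int_{B^2}\hat u_x^*\sigma$, the integral of $\sigma$ over the disc swept out by the homotopy (the cylinder $[0,1]\times\T$ collapses to a disc because one end is constant). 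That $dP^\sigma=\tau^\sigma$ on $\{\ell<\delta_*\}$ is then the standard Poincaré-lemma computation using closedness of $\tau^\sigma$.

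For (iii), the quadratic estimate: the disc $\hat u_x:B^2\to M$ has image in the convex ball $B(b(x),\rho)$, whose $g$-diameter is $O(\rho)$, and its area is controlled. Concretely, parametrizing by $(a,s)$, the two coordinate velocities of $\hat u_x$ are $\partial_s h_{1-a}(x)$, which is $O(|x'(s)|)$ uniformly in $a$ (derivative bounds on $\exp$), and $\partial_a h_{1-a}(x)=-\tfrac{d}{da}h_{1-a}(x)$, which is $O(\operatorname{dist}(x(s),b(x)))=O(\ell(x))$ since the whole loop lies in a ball of radius $O(\ell(x))$ around $b(x)$ (because $\ell(x)<\delta_*$ bounds the diameter of the image). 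Hence the pointwise area-density of $\hat u_x$ is $O(|x'(s)|\cdot\ell(x))$, and $\sigma$ being a smooth $2$-form on the compact $M$ is bounded in norm, so
\begin{equation*}
|P^\sigma(x)|\ \le\ \|\sigma\|_\infty\!\int_{B^2}\!\big|\hat u_x^*(\text{area})\big|\ \le\ C'\,\ell(x)\!\int_0^1|x'(s)|\,\di s\ =\ C'\,\ell(x)^2\,,
\end{equation*}
which is \eqref{psigma}.

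The main obstacle I expect is the careful bookkeeping in step (i): one must verify that the geodesic-contraction homotopy is genuinely $W^{1,2}$-continuous (not merely $C^0$) and that it does not increase the length beyond a fixed multiple, so that it indeed maps $\{\ell<\delta\}$ into itself for $\delta\le\delta_*$. This requires choosing the center $b(x)$ to depend smoothly (or at least continuously in $W^{1,2}$) on $x$ and uniform bounds on the first and second derivatives of $\exp$ over a fixed compact set of $TM$ — routine but delicate. The estimates in (iii) then follow essentially for free from the same derivative bounds.
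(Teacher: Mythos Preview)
Your proposal is correct and follows essentially the same line as the paper: geodesic contraction of $x$ to the constant loop at $x(0)$ via $x_a(s)=\exp_{x(0)}\!\big(a\exp_{x(0)}^{-1}x(s)\big)$, the primitive $P^\sigma$ obtained by integrating $\sigma$ over the swept-out disc, and the quadratic bound from estimating the two coordinate velocities $|\partial_a\hat u_x|\le d(x(0),x(s))\le\ell(x)/2$ and $|\partial_s\hat u_x|\le|x'(s)|$. The one point where the paper is sharper is precisely the obstacle you flag at the end: instead of settling for $\ell(h_a(x))\le C\,\ell(x)$ with a constant to be absorbed, the paper observes (working in normal coordinates at $x(0)$, where the radial geodesics are straight lines and the metric is Euclidean to first order) that for $\delta$ small enough the map $a\mapsto|x_a'(s)|$ is pointwise non-decreasing, hence $\ell(x_a)\le\ell(x)$ exactly, and the homotopy genuinely stays inside $\{\ell<\delta\}$ for every $\delta\le\delta_*$.
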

\begin{proof}
Choose $\delta<2\rho(g)$, where $\rho(g)$ is the injectivity radius of $g$. With this choice, for each $x\in\{\ell<\delta\}$ and each $s\in\T$, there exists a unique geodesic $y_s:[0,1]\rightarrow M$ joining $x(0)$ to $x(s)$. For each $a\in[0,1]$ define $x_a:\T\rightarrow M$ by $x_a(s):=y_s(a)$. Taking a smaller $\delta$ if necessary, one can prove that $a\mapsto|x_a'|$ is a non-decreasing family of functions (use normal coordinates at $x(0)$). Thus, $a\mapsto \ell(x_a)$ is non-decreasing as well and 
\begin{align*}
[0,1]\times\{\ell<\delta\}&\longrightarrow \{\ell<\delta\}\\
(a,x)&\longmapsto x_a
\end{align*}
yields the desired deformation. In order to estimate $P^\sigma$ is enough to bound the area of the deformation disc $\hat u_x$:
\begin{equation*}
\op{area}(\hat u_x)\leq\int_0^1\di a\int_0^1 \left|\frac{dy_s}{da}(a)\right|\cdot|x_a'(s)|\,\di s\leq \int_0^1\di a\int_0^1d(x(0),x(s))|x'(s)|\,\di s\leq \frac{\ell(x)}{2}\ell(x)\,.
\end{equation*}
\end{proof}
In view of this lemma, for all $\delta\in(0,\delta_*]$, we define the set
\begin{equation}
\mathcal V^{\delta}\, :=\ \{\ell<\delta\}\times\R^+\,\subset\,\Lambda_0
\end{equation}
and the function $S_k:\mathcal V^{\delta_*}\,\longrightarrow\,\R$ given by
\begin{equation}
S_k\,:=\ A_k\ -\ P^\sigma\circ\op{pr}_{W^{1,2}}\,. 
\end{equation}
Such a function is a primitive of $\eta_k$ on $\mathcal V^{\delta_*}$. By \eqref{est-quad}, it admits the following upper bound.
\begin{prp}
There exists $C>0$ such that, for every $\gamma\in \mathcal V^{\delta_*}$, there holds
\begin{equation}\label{boundsk}
 S_k(\gamma)\ \leq\ C\cdot\left(\frac{e(x)}{T}\ +\ T\ +\ \ell(x)^2\right)\,,\quad\forall\, k\in I\,.
\end{equation}
\end{prp}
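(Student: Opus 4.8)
The plan is to bound $S_k = A_k - P^\sigma\circ\op{pr}_{W^{1,2}}$ term by term. For the potential part, Lemma \ref{def-ret} gives directly $|P^\sigma(x)| \leq C\cdot\ell(x)^2$, so this piece is already accounted for by the $\ell(x)^2$ summand in \eqref{boundsk}; the whole problem reduces to estimating $A_k$ from above.

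First I would write $A_k(\gamma) = T\int_0^1\big[L(x,x'/T)+k\big]\,\di s$ and apply the upper bound in \eqref{est-quad} to $L$, namely $L(q,v)\leq C_1|v|^2+C_0$. This gives
\begin{equation*}
A_k(\gamma)\ \leq\ T\int_0^1\left(C_1\frac{|x'|^2}{T^2}+C_0+k\right)\di s\ =\ \frac{C_1}{T}\,e(x)\ +\ (C_0+k)\,T\,.
\end{equation*}
Since $k$ ranges over the compact interval $I$, the coefficient $C_0+k$ is bounded above by a constant depending only on $I$, so, enlarging $C$ if necessary, $A_k(\gamma)\leq C\big(e(x)/T + T\big)$. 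Combining with the estimate for $P^\sigma$ and absorbing constants into a single $C>0$ yields exactly \eqref{boundsk}, uniformly in $k\in I$ and in $\gamma\in\mathcal V^{\delta_*}$.

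There is really no serious obstacle here: both ingredients — the quadratic upper bound \eqref{est-quad} on $L$ and the quadratic bound \eqref{psigma} on $P^\sigma$ — are already in hand, and the only mild point to be careful about is that the constant must not depend on $k$, which is guaranteed precisely because we restricted to a compact interval $I$ at the start of the subsection. One should also note that $\mathcal V^{\delta_*}\subset\Lambda_0$ lies in the domain of $P^\sigma$ (by the definition of $\mathcal V^\delta$ with $\delta\leq\delta_*$), so $S_k$ is genuinely defined there and the computation is legitimate. The estimate is deliberately not sharp in the $\ell(x)^2$ term — one keeps it separate from $e(x)/T$ because later, near constant loops, $\ell(x)^2$ and $e(x)$ are comparable but the extra term records the magnetic contribution explicitly.
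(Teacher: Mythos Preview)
Your proof is correct and follows exactly the approach the paper indicates: the paper simply states that the bound follows from \eqref{est-quad}, and you have correctly filled in the details by combining the upper bound on $L$ with the estimate \eqref{psigma} on $P^\sigma$, using compactness of $I$ to make the constant uniform in $k$.
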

\noindent This result has an immediate consequence on vanishing sequences and flow lines of $\Phi^k$.
\begin{cor}\label{cor-perbel}
Let $b>0$ and $k\in I$ be fixed. The following two statements hold:
\begin{enumerate}
\item if $(\gamma_m)$ is a vanishing sequence such that $\gamma_m\notin\{S_k<b\}$ for all $m\in\N$, then $T_m$ is bounded away from zero;
\item the flow $\Phi^k$ is positively complete on the set $\Lambda\setminus\{S_k<b\}$.
\end{enumerate}
\end{cor}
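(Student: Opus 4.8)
The plan is to deduce both statements from the upper bound \eqref{boundsk} combined with Lemma \ref{lem:van-bou} (for part (1)) and the previous proposition on finite-time flow lines (for part (2)). The common mechanism is that on the bad set $\Lambda\setminus\{S_k<b\}$ one has $S_k(\gamma)\geq b$, so \eqref{boundsk} forces $\frac{e(x)}{T}+T+\ell(x)^2\geq b/C$; as soon as $T$ is small, the term $T$ is negligible and, since $\ell(x)^2\leq e(x)$, everything is controlled by $e(x)/T$, which then cannot be small.

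For part (1), I would argue by contradiction: suppose a subsequence has $T_m\to 0$. Since $(\gamma_m)$ is a vanishing sequence, Lemma \ref{lem:van-bou} gives $e(x_m)\leq C'T_m^2$ for a uniform constant $C'$, hence $\frac{e(x_m)}{T_m}\leq C'T_m\to 0$, and also $\ell(x_m)^2\leq e(x_m)\leq C'T_m^2\to 0$. Plugging into \eqref{boundsk}, $S_k(\gamma_m)\leq C(C'T_m+T_m+C'T_m^2)\to 0$. For $m$ large this makes $S_k(\gamma_m)<b$, contradicting $\gamma_m\notin\{S_k<b\}$. Therefore $T_m$ is bounded away from zero.

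For part (2), let $u_\gamma:[0,R_\gamma)\to\Lambda$ be a maximal positive flow line of $\Phi^k$ and assume $R_\gamma<+\infty$; I must produce $R_{\gamma,Y}<R_\gamma$ with $u_\gamma(R_{\gamma,Y})\notin\Lambda\setminus\{S_k<b\}$, i.e.\ with $S_k(u_\gamma(R_{\gamma,Y}))<b$. By the preceding proposition applied to this finite-time flow line, there is a sequence $r_m\to R_\gamma$ with $T(r_m)\to 0$ and $e(x(r_m))\leq C'' T(r_m)^2$ for a uniform $C''$. Exactly as in part (1), feeding these into \eqref{boundsk} gives $S_k(\gamma(r_m))\to 0$, so for $m$ large enough $S_k(\gamma(r_m))<b$; take $R_{\gamma,Y}:=r_m$ for such an $m$. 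One subtlety to handle is whether $\gamma(r_m)\in\mathcal V^{\delta_*}$ so that \eqref{boundsk} even applies: since $\ell(x(r_m))^2\leq e(x(r_m))\leq C''T(r_m)^2\to 0$, the loops $x(r_m)$ eventually lie in $\{\ell<\delta_*\}$, hence $\gamma(r_m)\in\mathcal V^{\delta_*}$ for $m$ large, which is all we need. The same remark fixes the analogous point in part (1).

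The only mild obstacle is bookkeeping of the domain of validity: \eqref{boundsk} is stated only on $\mathcal V^{\delta_*}$, so in both parts one must first observe that smallness of $T$ together with the energy estimate forces $\ell(x)$ small, placing the relevant loops inside $\mathcal V^{\delta_*}$; after that the estimates are immediate. No compactness or analytic input beyond what has already been established is required.
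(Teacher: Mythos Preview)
Your proof is correct and is precisely the argument the paper has in mind: the corollary is stated as an ``immediate consequence'' of the bound \eqref{boundsk}, and your write-up spells out that deduction exactly, including the check that the relevant loops eventually lie in $\mathcal V^{\delta_*}$ so that \eqref{boundsk} applies.
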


We conclude this section by showing that the infimum of $S_k$ on short loops is zero and it is approximately achieved on constant loops with small period. Furthermore, $S_k$ is bounded away from zero on the set of loops having some fixed positive length. 
\begin{prp}\label{prp-mp}
There exist $\delta_I\leq \delta_*$ and positive numbers $b_I,T_I$ such that, for all $k\in I$,
\begin{equation}\label{mp-k}
\textit{(a)}\ \ \inf_{\mathcal V^{\delta_{I}}}S_k\ =\ 0\,,\quad\quad\textit{(b)}\ \ \inf_{\partial \mathcal V^{\delta_{I}}}S_k\ \geq\ b_I\,,\quad\quad\textit{(c)}\ \ \sup_{M_0\times\{T_{I}\}}S_k\ <\ \frac{b_I}{2}\,.
\end{equation}
\end{prp}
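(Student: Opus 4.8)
The plan is to reduce all three inequalities in \eqref{mp-k} to a single elementary estimate: there is a constant $\iota_I>0$, depending only on $I$ (and on the fixed data $L,g$), such that
\begin{equation*}
L(q,v)+k\ \geq\ \iota_I\,|v|\,,\qquad\forall\,(q,v)\in TM,\ \ \forall\,k\in I.
\end{equation*}
Granting this, the two other ingredients are already in hand: the bound $|P^\sigma(x)|\leq C_P\,\ell(x)^2$ from Lemma~\ref{def-ret} (I write $C_P$ for the constant $C$ of \eqref{psigma}), and the identity $A_k(x,T)=T(k-V(x))$ on constant loops from \eqref{action-m0}. Integrating the displayed inequality along a curve and using that $\ell(\gamma)=\ell(x)$, I would get, for every $\gamma=(x,T)\in\mathcal V^{\delta_*}$ and every $k\in I$,
\begin{equation*}
A_k(\gamma)\ =\ \int_0^T\big[L(\gamma,\dot\gamma)+k\big]\,\di t\ \geq\ \iota_I\int_0^T|\dot\gamma|\,\di t\ =\ \iota_I\,\ell(x),
\end{equation*}
and hence, since $S_k=A_k-P^\sigma\circ\op{pr}_{W^{1,2}}$,
\begin{equation*}
S_k(x,T)\ \geq\ \iota_I\,\ell(x)-C_P\,\ell(x)^2\ =\ \ell(x)\big(\iota_I-C_P\,\ell(x)\big).
\end{equation*}
Note this lower bound does not involve $T$, so it holds on all of $\mathcal V^{\delta_*}$.

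With this inequality in place I would choose the constants as follows. Put $\delta_I:=\min\{\delta_*,\,\iota_I/(2C_P)\}$. For $\gamma\in\mathcal V^{\delta_I}$ we have $\ell(x)<\delta_I\leq\iota_I/(2C_P)$, so $S_k(x,T)\geq\tfrac12\iota_I\,\ell(x)\geq0$; on the other hand, along constant loops $S_k(x_0,T)=T(k-V(x_0))$ tends to $0^+$ as $T\to0^+$, because $k-V(x_0)\geq\min I-e_0(L)>0$ is bounded. Hence $\inf_{\mathcal V^{\delta_I}}S_k=0$, which is \textit{(a)}. On $\partial\mathcal V^{\delta_I}=\{\ell=\delta_I\}\times\R^+$ the same inequality gives
\begin{equation*}
S_k\ \geq\ \delta_I\big(\iota_I-C_P\,\delta_I\big)\ \geq\ \tfrac12\,\delta_I\,\iota_I\ =:\ b_I\ >\ 0,
\end{equation*}
which is \textit{(b)}. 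Finally, on $M_0\times\{T_I\}$ one has $\ell=0$, hence $P^\sigma=0$, and by \eqref{action-m0}, $S_k(x_0,T_I)=T_I(k-V(x_0))\leq T_I\,(\max I-e_m(L))$; choosing $T_I:=b_I/(4(\max I-e_m(L)))$, which is a positive number since $\max I>e_0(L)\geq e_m(L)$, yields $\sup_{M_0\times\{T_I\}}S_k\leq b_I/4<b_I/2$, which is \textit{(c)}. Since $\iota_I,\delta_I,b_I,T_I$ are determined by $I$ alone, the three bounds hold uniformly for $k\in I$, as required.

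It remains to establish the displayed lower bound, and this is the only step carrying any content. Since $L|_{T_qM}$ attains its minimum at $v=0$, we have $L(q,v)\geq L(q,0)=-V(q)\geq-e_0(L)$, so $L(q,v)+k\geq\epsilon_0:=\min I-e_0(L)>0$ for all $(q,v)\in TM$ and all $k\in I$; this handles bounded velocities. For large velocities, \eqref{est-quad} gives $L(q,v)+k\geq C_1|v|^2-C_0+\min I$, which exceeds $|v|$ as soon as $|v|\geq R$ for a suitable $R=R(C_0,C_1,\min I)>0$; and for $|v|\leq R$ one has $L(q,v)+k\geq\epsilon_0\geq(\epsilon_0/R)|v|$. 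Combining the two regimes gives $L(q,v)+k\geq\iota_I|v|$ with $\iota_I:=\min\{1,\,\epsilon_0/R\}>0$, a constant depending only on $C_0,C_1$ and $I$, hence uniform in $k\in I$. (Equivalently, one may invoke Fenchel's inequality $L(q,v)+k\geq\langle p,v\rangle$ for all $p$ with $H(q,p)\leq k$, and take $\iota_I$ to be the infimum over $q\in M$ and $k\in I$ of the inradius of the convex body $\{H(q,\cdot)\leq k\}\subset T_q^*M$; this is positive and finite because $k>e_0(L)=\max_q H(q,0_q)$ and $M,I$ are compact.) Thus the main obstacle is not analytic but organisational: one must carry every constant through so that none of them depends on the individual value of $k$, all the substantial estimates --- for $A_k$ via \eqref{est-quad}, for $P^\sigma$ via \eqref{psigma}, and for $A_k$ on $M_0$ via \eqref{action-m0} --- being already available.
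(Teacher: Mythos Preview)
Your proof is correct and follows the same strategy as the paper's: both derive a lower bound of the form $S_k\geq c_I\,\ell(x)-C\,\ell(x)^2$ on $\mathcal V^{\delta_*}$, from which \textit{(a)}--\textit{(c)} follow by elementary choices of $\delta_I$, $b_I$, $T_I$. The only cosmetic difference is that you establish the pointwise linear bound $L(q,v)+k\geq\iota_I|v|$ first and then integrate, whereas the paper integrates the quadratic bound $L\geq C_1|v|^2-e_0(L)$ and then applies the AM--GM inequality to $C_1\,e(x)/T+(k-e_0(L))\,T$; either route yields the same inequality in $\ell(x)$.
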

\begin{proof}
Since for all $q\in M$, the function $L|_{T_qM}$ attains its minimum at $(q,0)$, the estimate from below on $L$ obtained in \eqref{est-quad} can be refined to
\begin{equation*}
L(q,v)\ \geq\ C_1|v|^2\,+\,\min_{q\in M} L(q,0)\ =\ C_1|v|^2\,-\,e_0(L)\,.
\end{equation*}
From this inequality and \eqref{psigma}, we can bound from below $S_k(\gamma)$:
\begin{align*}
S_k(\gamma)\ &\geq\ T\cdot\int_0^1\big[C_1\cdot\frac{|x'|^2}{T^2}-e_0(L)+k\Big]\di s\,-\,C \cdot\ell(x)^2\\
&\geq\ C_1\cdot\frac{e(x)}{T}+(k-e_0(L))\cdot T\,-\,C\cdot \ell(x)^2\\
&\stackrel{\mbox{}^{(\star)}}{\geq}\ 2\sqrt{C_1(\min I-e_0(L))}\cdot\ell(x)\,-\,C\cdot \ell(x)^2\,.
\end{align*}
where in $(\star)$ we made use of the inequality between arithmetic and geometric mean. Hence, there exists $\delta_I>0$ sufficiently small, such that the last quantity is positive if $\ell(x)<\delta_I$ and bounded from below by
\begin{equation*}
b_I\,:=\ 2\sqrt{C_1(\min I-e_0(L))}\cdot\delta_I\,-\,C\cdot \delta_I^2\ >0
\end{equation*}
if $\ell(x)=\delta_I$. This implies Inequality \textit{(b)} in \eqref{mp-k} and that $\inf_{V^{\delta_{I}}}S_k\geq0$. To prove that $\inf_{V^{\delta_{I}}}S_k\leq0$ and that there exists $T_I$ such that Inequality \textit{(c)} in \eqref{mp-k} holds, we just recall from \eqref{action-m0} that
\begin{equation*}
\lim_{T\rightarrow0}\sup_{M_0\times\{T\}}S_k\ =\ 0\,.\qedhere
\end{equation*}
\end{proof}
In the next section we will prove a minimax theorem for a class of closed $1$-form on abstract Hilbert manifolds. Such a class will satisfy a general version of the properties we have proved so far for $\eta_k$.
\bigskip

\section{The minimax technique}\label{sec:min}
In this section we present an abstract minimax technique which represents the core of the proof of the Main Theorem. We formulate it in a very general form on a non-empty Hilbert manifold $\mathscr H$.

\subsection{An abstract theorem}\label{sub-minimax}

We start by setting some notation for homotopy classes of maps from Euclidean balls into $\mathscr H$. Let $d\in\N$ and $\mathscr U$ be a subset of $\mathscr H$. Define $\big[(B^d,\partial B^d),(\mathscr H,\mathscr U)\big]$ as the set of homotopy classes of maps $\gamma:(B^d,\partial B^d)\rightarrow(\mathscr H,\mathscr U)$. By this we mean that the maps send $B^d$ to $\mathscr H$ and $\partial B^d$ to $\mathscr U$, and that the homotopies do the same. The classes $[\gamma]$, where $\gamma$ is such that $\gamma(B^d)\subset \mathscr U$ are called \textit{trivial}. If $\mathscr U'\subset\mathscr U$, we have a map
\begin{equation*}
 i^{\mathscr U'}_{\mathscr U}:\big[(B^d,\partial B^d),(\mathscr H,\mathscr U')\big]\longrightarrow \big[(B^d,\partial B^d),(\mathscr H,\mathscr U)\big]
\end{equation*}
We are now ready to state the main result of this section.
\begin{thm}\label{thm-for}
Let $\mathscr H$ be a non-empty Hilbert manifold, $\mathscr I=[k_0,k_1]$ be a compact interval and $d\geq1$ an integer. Let $\alpha_k\in\Omega^1(\mathscr H)$ be a family of Lipschitz-continuous forms parametrized by $k\in\mathscr I$ and such that
\begin{itemize}
 \item the integral of $\alpha_k$ over contractible loops vanishes;
 \item $\alpha_k=\alpha_{k_0}+(k-k_0)d\mathscr T$, where $\mathscr T:\mathscr H\rightarrow(0,+\infty)$ is a $C^{1,1}$ function such that
 \begin{equation}
 \sup_{\mathscr H}|d\mathscr T|\ < \ +\infty\,.
 \end{equation}
\end{itemize}
Define the vector field
\begin{equation}
\mathscr X_k\,:=\ -\,\frac{\sharp\, \alpha_k}{\sqrt{1+|\alpha_k|^2}}\,,
\end{equation}
where $\sharp$ is the metric duality, and suppose that there exists an open set $\mathscr V\subset\mathscr H$ such that:
\begin{itemize}
 \item there exists $\mathscr S_k:\overline{\mathscr V}\rightarrow\R$ satisfying
 \begin{equation}\label{eq:prim}
d\mathscr S_k\ = \ \alpha_k\,,\quad\quad \mathscr S_k\ =\ \mathscr S_{k_0}\ +\ (k-k_0)\,\mathscr T\,;
 \end{equation}
 \item there exists a real number
 \begin{equation}\label{eq:infbeta}
\beta_0\ < \ \inf_{\partial \mathscr V}\mathscr S_{k_0}\ =:\,\beta_{\partial\mathscr V}
\end{equation}
such that the flow $r\mapsto\Phi^{\mathscr X_k}_r$ is positively complete on the set $\mathscr H\setminus\{\mathscr S_k<\beta_0\}$;
 \item there exists a set $\mathscr M\subset\{\mathscr S_{k_1}<\beta_0\}$ and a class $\mathscr G\in[(B^d,\partial B^d),(\mathscr H,\mathscr M)\big]$ such that $i^{\mathscr M}_{\mathscr V}(\mathscr G)$ is non-trivial.
\end{itemize}
Then, the following two statements hold true. First, for all $k\in\mathscr I$, there exists a sequence $(h^k_m)_{m\in\N}\subset\mathscr H\setminus\{\mathscr S_k< \beta_0\}$ such that 
\begin{equation*}
\lim_{m\rightarrow\infty}|\alpha_k|_{h^k_m}\ =\ 0\,.
\end{equation*}
Second, there exists a subset $\mathscr I_*\subset\mathscr I$ such that
\begin{itemize}
 \item $\mathscr I\setminus\mathscr I_*$ is negligible with respect to the $1$-dimensional Lebesgue measure;
 \item for all $k\in\mathscr I_*$ we have
 \begin{equation*}
  \sup_{m\in\N}\mathscr T(h^k_m)\ <\ +\infty\,.
 \end{equation*}
\end{itemize}
Moreover, if there exists a $C^{1,1}$-function $\widehat{\mathscr S}_k:\mathscr H\rightarrow\R$ which extends $\mathscr S_k$ and satisfies \eqref{eq:prim} on the whole $\mathscr H$, we also have that
\begin{equation}
\lim_{m\rightarrow\infty}\widehat{\mathscr S}_k(h^k_m)\ =\ \inf_{\gamma\in\mathscr G}\sup_{\xi\in B^d}\ \widehat{\mathscr S}_k\circ \gamma \,(\xi)\ \geq\ \beta_{\partial \mathscr V}\,.
\end{equation}
\end{thm}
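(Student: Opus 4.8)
The plan is to run a quantitative deformation argument (a minimax/min--max scheme adapted to closed $1$-forms rather than functions), following the standard template but carefully tracking the fact that $\alpha_k$ is only closed, not exact, and that $\mathscr S_k$ is a primitive only on $\overline{\mathscr V}$.

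\medskip
\textbf{Step 1: Setting up the minimax value.} Fix $k\in\mathscr I$. For a map $\gamma\in\mathscr G$ define $\mathscr c(\gamma):=\sup_{\xi\in B^d}\widehat{\mathscr S}_k(\gamma(\xi))$ when the global primitive $\widehat{\mathscr S}_k$ exists, and in general work with the minimax value of the ``action'' obtained by integrating $\alpha_k$ along paths from a fixed basepoint to $\gamma(\xi)$ — this is well-defined up to a constant because $\alpha_k$ integrates to zero over contractible loops and $B^d$ is simply connected, so the supremum over $\xi\in B^d$ is a genuine number once we normalize by the value on $\partial B^d$ (where $\gamma$ lands in $\mathscr M\subset\{\mathscr S_{k_1}<\beta_0\}$, a region where a primitive is available). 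Set $c_k:=\inf_{\gamma\in\mathscr G}\sup_{\xi\in B^d}(\text{action along }\gamma)$. The first claim to establish is the lower bound $c_k\geq\beta_{\partial\mathscr V}$: any representative $\gamma$ of $\mathscr G$ has $i^{\mathscr M}_{\mathscr V}(\mathscr G)$ nontrivial, so $\gamma(B^d)\not\subset\mathscr V$, hence $\gamma$ must meet $\partial\mathscr V$ (using that $\gamma(\partial B^d)\subset\mathscr M\subset\{\mathscr S_{k_1}<\beta_0\}\subset\mathscr V$, the last inclusion because $\beta_0<\beta_{\partial\mathscr V}=\inf_{\partial\mathscr V}\mathscr S_{k_0}\le\inf_{\partial\mathscr V}\mathscr S_{k_1}$ as $\mathscr T>0$); at a point of $\partial\mathscr V$ the action equals $\mathscr S_k\ge\mathscr S_{k_0}\ge\beta_{\partial\mathscr V}$, so $\sup_\xi\ge\beta_{\partial\mathscr V}$ for every $\gamma$, whence $c_k\ge\beta_{\partial\mathscr V}$.

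\medskip
\textbf{Step 2: The deformation lemma.} Suppose toward a contradiction that for some $\varepsilon>0$ there are no ``almost-zeros'' of $|\alpha_k|$ in the strip $\{c_k-\varepsilon\le\text{action}\le c_k+\varepsilon\}\cap(\mathscr H\setminus\{\mathscr S_k<\beta_0\})$, i.e.\ $|\alpha_k|\ge\delta>0$ there. Along the flow $\Phi^{\mathscr X_k}_r$ the action decreases at rate $\alpha_k(\mathscr X_k)=-|\alpha_k|^2/\sqrt{1+|\alpha_k|^2}\le-\delta^2/\sqrt{1+\delta^2}$, so in bounded flow time we push the sublevel $\{\text{action}\le c_k+\varepsilon\}$ below $c_k-\varepsilon$, away from $\partial\mathscr V$. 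Positive completeness of $\Phi^{\mathscr X_k}$ on $\mathscr H\setminus\{\mathscr S_k<\beta_0\}$ guarantees the flow exists long enough (flow lines that would escape in finite time must exit that set, i.e.\ enter $\{\mathscr S_k<\beta_0\}$, which is already below the strip and harmless). Applying this deformation to a near-optimal $\gamma$ with $\sup_\xi(\text{action})<c_k+\varepsilon$ — and checking the flow does not move $\gamma(\partial B^d)$ out of $\mathscr M$, which holds because $\mathscr M\subset\{\mathscr S_{k_1}<\beta_0\}$ is flow-invariant-ish once we also arrange monotonicity there — produces a representative with $\sup_\xi(\text{action})<c_k-\varepsilon$, contradicting the definition of $c_k$. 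This yields, for each $k$, a sequence $(h^k_m)$ in $\mathscr H\setminus\{\mathscr S_k<\beta_0\}$ with $|\alpha_k|_{h^k_m}\to0$ and $\widehat{\mathscr S}_k(h^k_m)\to c_k$ (the convergence of the action follows because we may take $h^k_m$ in the strip of width $1/m$ around $c_k$).

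\medskip
\textbf{Step 3: The a.e.\ period bound via monotonicity in $k$.} Define $f(k):=c_k$. Using $\mathscr S_k=\mathscr S_{k_0}+(k-k_0)\mathscr T$ with $\mathscr T>0$, the action at energy $k$ dominates that at $k'<k$ along the same path, and a short computation shows $f$ is monotone nondecreasing and, more importantly, that the difference quotients of $f$ control $\inf_{\gamma}\sup_\xi\mathscr T(\gamma(\xi))$ from above along near-optimal sequences. Concretely, for $k<k'$ one gets $f(k')-f(k)\le(k'-k)\sup_{\xi}\mathscr T(\gamma(\xi))$ for near-optimal $\gamma$ at level $k$, while a matching lower bound comes from using a near-optimal path at level $k'$; combining them, at every point $k$ where $f$ is differentiable (which is a.e.\ by monotonicity/Lebesgue), one extracts a near-optimal sequence with $\sup_m\mathscr T(h^k_m)\le f'(k)+1<+\infty$. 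This is the standard ``Struwe monotonicity trick,'' and it is exactly where the hypothesis $\sup_{\mathscr H}|d\mathscr T|<+\infty$ and the affine dependence $\alpha_k=\alpha_{k_0}+(k-k_0)d\mathscr T$ are used. Set $\mathscr I_*$ to be the full-measure set of differentiability points of $f$.

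\medskip
The main obstacle, and the part requiring genuine care, is Step 2 in the non-exact setting: because $\alpha_k$ has no global primitive, ``action'' is a multivalued object and one must either work with a primitive defined only along the specific paths $\gamma$ (justified by simple-connectivity of $B^d$ and the vanishing of $\alpha_k$ on contractible loops) or, when $\widehat{\mathscr S}_k$ exists, with that global function; in either case one must verify that the deformation flow respects the boundary condition $\gamma(\partial B^d)\subset\mathscr M$ and does not create new topology — i.e.\ that the deformed map still represents $\mathscr G$ and $i^{\mathscr M}_{\mathscr V}(\mathscr G)$ remains nontrivial. The interplay between the two sublevel sets $\{\mathscr S_k<\beta_0\}$ (where the flow may be incomplete but which lies harmlessly below the minimax level) and the obstruction region near $\partial\mathscr V$ (where the action is pinned $\ge\beta_{\partial\mathscr V}$) is what makes the scheme close; keeping $\beta_0<\beta_{\partial\mathscr V}$ strict is essential so that the ``bad'' set and the critical strip are separated.
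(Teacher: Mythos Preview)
Your plan is the same scheme the paper follows (pulled-back primitive $\mathscr S_k^\gamma$ on $B^d$, minimax value $c_{\mathscr G}(k)$, deformation along the truncated gradient-like flow, Struwe monotonicity for the a.e.\ $\mathscr T$-bound), and Step~1 matches Lemma~\ref{lem:almax} essentially verbatim. Two points, however, are handled more loosely than the paper and are worth tightening.

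First, your Step~2 contradiction hypothesis is phrased on an ``action strip'' $\{c_k-\varepsilon\le\text{action}\le c_k+\varepsilon\}$, but without a global primitive this is not a subset of $\mathscr H$. The paper never forms such a strip: it states the contradiction hypothesis directly on the \emph{well-defined} set $\{\mathscr T<C(k_*)+3\}\setminus\{\mathscr S_{k_*}<\beta_1\}$ (with $\beta_1\in(\beta_0,\beta_{\partial\mathscr V})$) and uses Lemma~\ref{lem:almax} to show that every near-maximal $\xi$ of $\mathscr S_{k_*}^{\gamma^r_m}$ lands there. Relatedly, ``$\mathscr M$ is flow-invariant-ish'' is not enough to keep $\gamma^r_m\in\mathscr G$; the paper multiplies $\mathscr X_k$ by a cutoff $\mathscr B\circ\mathscr S_k$ vanishing near $\{\mathscr S_k\le\beta_0\}\supset\mathscr M$, so $\gamma^r_m|_{\partial B^d}$ is literally fixed and the flow is complete on all of $\mathscr H$.

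Second, in Step~3 you assert that $\sup|d\mathscr T|<\infty$ is ``exactly where'' the hypothesis enters, but you do not say how, and the Struwe difference-quotient alone only bounds $\mathscr T(\gamma_m(\xi))$ at $r=0$. The vanishing sequence lives on the \emph{flowed} maps $\gamma^r_m$, so one must propagate the bound along $u^\xi_m|_{[0,r]}$. The paper does this via a Cauchy--Schwarz estimate,
\[
|\mathscr T(u^\xi_m(r))-\mathscr T(u^\xi_m(0))|^2\ \le\ r\,(1+\sup|d\mathscr T|)^2\cdot\big(-\alpha_{k_*}(u^\xi_m|_{[0,r]})\big),
\]
together with the action-variation bound $-\alpha_{k_*}(u^\xi_m|_{[0,r]})\le (C(k_*)+2)\delta_m$ coming from the near-maximality of $\xi$. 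This is the step that genuinely consumes the $|d\mathscr T|$ hypothesis, and your outline should make it explicit.
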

To prove Theorem \ref{thm:main}\textit{(1a)} we will also need a version of the minimax theorem for $d=0$, namely when the maps are simply points in $\mathscr H$. We state it here for a single function and not for a $1$-parameter family since this will be enough for the intended application. For a proof we refer to \cite[Remark 1.10]{Abb13}.
\begin{thm}\label{thm:fun}
Let $\mathscr H$ be a non-empty Hilbert manifold and let $\widehat{\mathscr S}:\mathscr H\rightarrow \R$ be a $C^{1,1}$-function bounded from below. Suppose that the flow of the vector field
\begin{equation*}
\mathscr X\,:=\ -\,\frac{\nabla \widehat{\mathscr S}}{\sqrt{1+|\nabla\widehat{\mathscr S}|^2}}
\end{equation*}
is positively complete on some non-empty sublevel set of $\widehat{\mathscr S}$.
Then, there exists a sequence $(h_m)_{m\in\N}\subset\mathscr H$ such that
\begin{equation}
\lim_{m\rightarrow+\infty}|d_{h_m}\widehat{\mathscr S}|\ =\ 0\,,\quad\quad\lim_{m\rightarrow+\infty}\widehat{\mathscr S}(h_m)\ =\ \inf_{\mathscr H}\widehat{\mathscr S}\,.
\end{equation}
\end{thm}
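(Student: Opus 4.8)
The plan is to prove Theorem \ref{thm:fun} first, since it is the conceptual heart, and then reduce Theorem \ref{thm-for} to (a parametrized, deformed-retract version of) it. For Theorem \ref{thm:fun} I would argue by contradiction: assume there is $\varepsilon>0$ and a neighbourhood of the infimum level where $|d\widehat{\mathscr S}|\geq\varepsilon$, i.e. for all $h$ with $\widehat{\mathscr S}(h)\leq \inf\widehat{\mathscr S}+c$ one has $|d_h\widehat{\mathscr S}|\geq\varepsilon$. The key computation is that along a flow line $r\mapsto\Phi^{\mathscr X}_r(h)$ of $\mathscr X=-\nabla\widehat{\mathscr S}/\sqrt{1+|\nabla\widehat{\mathscr S}|^2}$ one has
\begin{equation*}
\frac{\di}{\di r}\,\widehat{\mathscr S}\big(\Phi^{\mathscr X}_r(h)\big)\ =\ -\,\frac{|\nabla\widehat{\mathscr S}|^2}{\sqrt{1+|\nabla\widehat{\mathscr S}|^2}}\ \leq\ -\,\frac{\varepsilon^2}{\sqrt{1+\varepsilon^2}}\qquad\text{wherever }|\nabla\widehat{\mathscr S}|\geq\varepsilon\,,
\end{equation*}
so the action strictly decreases at a definite rate. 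Taking $h$ in the non-empty sublevel set on which $\Phi^{\mathscr X}$ is positively complete, the flow line $u_h$ is defined for all $r\geq0$ (positive completeness plus the fact that $\widehat{\mathscr S}$ is non-increasing along it keeps it in a fixed sublevel set forever); but then $\widehat{\mathscr S}(u_h(r))\to-\infty$, contradicting boundedness from below. Hence along such a flow line $|\nabla\widehat{\mathscr S}|$ comes arbitrarily close to $0$, and the Cauchy–Schwarz/rate estimate also forces the value $\widehat{\mathscr S}(u_h(r))$ to converge, necessarily to $\inf_{\mathscr H}\widehat{\mathscr S}$ (if the limit were strictly larger, we could descend further once $|\nabla\widehat{\mathscr S}|$ is small where the gradient is bounded below, contradiction again). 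Extracting a sequence $r_m\to\infty$ along which $|\nabla\widehat{\mathscr S}|\to0$ gives $(h_m):=(u_h(r_m))$ with both required limits.

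For Theorem \ref{thm-for} the strategy is a minimax over the homotopy class $\mathscr G$. Since $\widehat{\mathscr S}_k$ extends $\mathscr S_k$ and satisfies \eqref{eq:prim} globally, I would set
\begin{equation*}
c_k\ :=\ \inf_{\gamma\in\mathscr G}\ \sup_{\xi\in B^d}\ \widehat{\mathscr S}_k\big(\gamma(\xi)\big)\,.
\end{equation*}
First one shows $c_k\geq\beta_{\partial\mathscr V}$: because $i^{\mathscr M}_{\mathscr V}(\mathscr G)$ is non-trivial, every representative $\gamma$ of $\mathscr G$ must have its image meet $\partial\mathscr V$ (otherwise, using that $\gamma(B^d)\cap\mathscr V$ together with $\gamma(\partial B^d)\subset\mathscr M\subset\{\mathscr S_{k_1}<\beta_0\}$ would let one push the whole disc into $\mathscr V$, making $i^{\mathscr M}_{\mathscr V}(\mathscr G)$ trivial). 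At such a point $\widehat{\mathscr S}_k=\mathscr S_k\geq\mathscr S_{k_0}+(k-k_0)\inf\mathscr T\geq\inf_{\partial\mathscr V}\mathscr S_{k_0}=\beta_{\partial\mathscr V}$ when $k\geq k_0$, using $\mathscr T>0$; hence $\sup_{B^d}\widehat{\mathscr S}_k\circ\gamma\geq\beta_{\partial\mathscr V}$ for every $\gamma$, so $c_k\geq\beta_{\partial\mathscr V}>\beta_0$. Next, the usual deformation argument: if there were $\varepsilon>0$ with $|\alpha_k|\geq\varepsilon$ on the strip $\{c_k-\delta\leq\widehat{\mathscr S}_k\leq c_k+\delta\}$ for small $\delta$, one flows by $\Phi^{\mathscr X_k}_r$ for a suitable finite time; positive completeness on $\mathscr H\setminus\{\mathscr S_k<\beta_0\}$ (note $c_k-\delta>\beta_0$) guarantees the relevant flow lines survive long enough, the rate estimate above pushes the $c_k+\delta$-sublevel into the $c_k-\delta$-sublevel, and since $\mathscr X_k$ vanishes nowhere it is tangent to nothing problematic — crucially the flow fixes $\mathscr M$ setwise because $\mathscr M\subset\{\mathscr S_{k_1}<\beta_0\}\subset\{\mathscr S_k<\beta_0\}$ for $k\leq k_1$ (values only decrease along $\Phi^{\mathscr X_k}$), so composing a near-optimal $\gamma\in\mathscr G$ with the time-$r$ flow yields another representative of $\mathscr G$ with strictly smaller sup, contradicting the definition of $c_k$. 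Therefore for each $k$ there is a vanishing sequence $(h^k_m)$ at level $\widehat{\mathscr S}_k\to c_k\geq\beta_{\partial\mathscr V}$, and in particular $h^k_m\notin\{\mathscr S_k<\beta_0\}$ eventually, as claimed.

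It remains to extract the full-measure set $\mathscr I_*$ on which $\mathscr T(h^k_m)$ stays bounded. The function $k\mapsto c_k$ is, by \eqref{eq:prim}, of the form $c_k=\inf_\gamma\sup_\xi\big(\widehat{\mathscr S}_{k_0}(\gamma(\xi))+(k-k_0)\mathscr T(\gamma(\xi))\big)$, i.e. an infimum of affine functions of $k$, hence concave; a concave function on a compact interval is Lipschitz on the interior and differentiable off a negligible set. At a point $k$ of differentiability I would compare the minimax values at $k$ and at nearby $k\pm t$: testing the infimum at $k$ with a representative $\gamma$ that is $t^2$-nearly optimal and using $\widehat{\mathscr S}_{k\pm t}\circ\gamma=\widehat{\mathscr S}_k\circ\gamma\pm t\,\mathscr T\circ\gamma$ shows the difference quotients $(c_{k+t}-c_k)/t$ squeeze $\mathscr T(\gamma(\xi_{\mathrm{max}}))$ between the left and right derivatives of $c$, so near-optimal-disc sup-points have $\mathscr T$ bounded by a constant depending only on $c'(k)$. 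Feeding this back into the deformation argument — run the minimax at $k$ using discs built from $t$-nearly optimal discs at $k$ itself — produces the vanishing sequence $(h^k_m)$ inside the region where $\mathscr T$ is controlled, giving $\sup_m\mathscr T(h^k_m)<+\infty$ for every $k$ in the differentiability set $\mathscr I_*$. I expect the last paragraph — making the quantitative ``bounded $\mathscr T$ along the minimax sequence at differentiability points'' argument airtight, in the style of Struwe's monotonicity trick — to be the main obstacle; the gradient-flow estimates in the first two paragraphs are routine once the scaling factor $1/\sqrt{1+|\alpha_k|^2}$ is exploited to make $\mathscr X_k$ complete-friendly with $|\mathscr X_k|\leq1$.
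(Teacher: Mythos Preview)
The paper does not actually prove Theorem~\ref{thm:fun}; immediately after stating it, the text says ``For a proof we refer to \cite[Remark~1.10]{Abb13}.'' So there is no argument in the paper to compare your proposal against.

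Your contradiction argument in the first paragraph is the standard one and is correct. One small point of care: you must start the flow at a point $h$ with $\widehat{\mathscr S}(h)<\min(\beta,\inf\widehat{\mathscr S}+c)$, where $\{\widehat{\mathscr S}<\beta\}$ is the non-empty sublevel set on which the flow is positively complete; such $h$ exists since both thresholds exceed $\inf\widehat{\mathscr S}$. Then the flow line stays in both sublevel sets forever (it cannot leave a sublevel set since $\widehat{\mathscr S}$ is non-increasing, and hence by positive completeness it exists for all $r\geq0$), so $|\nabla\widehat{\mathscr S}|\geq\varepsilon$ along the entire trajectory, and the rate estimate drives $\widehat{\mathscr S}\to-\infty$. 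Once this contradiction is reached, the theorem is already proved: the negation of the conclusion is exactly the existence of such $\varepsilon,c>0$. Your sentences beginning ``Hence along such a flow line\ldots'' are superfluous, and the parenthetical ``if the limit were strictly larger, we could descend further\ldots'' is not a valid argument --- a single flow line has no reason to reach the global infimum. Simply drop that passage.

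The remaining two-thirds of your proposal concern Theorem~\ref{thm-for}, which is a \emph{different} statement and is proved in full in Sections~3.2--3.3 of the paper. Since you devoted most of your effort there, two corrections are worth recording:
\begin{itemize}
\item The concavity claim is wrong. For fixed $\gamma$, the map $k\mapsto\sup_{\xi}\big(\widehat{\mathscr S}_{k_0}(\gamma(\xi))+(k-k_0)\mathscr T(\gamma(\xi))\big)$ is a supremum of affine functions of $k$, hence \emph{convex}, not affine; an infimum over $\gamma$ of convex functions need not be concave. The paper instead shows $k\mapsto c_{\mathscr G}(k)$ is non-decreasing (from $\mathscr T>0$) and invokes Lebesgue's a.e.\ differentiability of monotone functions to obtain $\mathscr I_*$.
\item The unmodified flow $\Phi^{\mathscr X_k}$ does not fix $\mathscr M$: decrease of $\widehat{\mathscr S}_k$ only gives $\Phi^{\mathscr X_k}_r(\mathscr M)\subset\{\mathscr S_k<\beta_0\}$, not $\subset\mathscr M$, so $\Phi^{\mathscr X_k}_r\circ\gamma$ need not lie in $\mathscr G$. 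The paper remedies this by cutting off to $\check{\mathscr X}_k=(\mathscr B\circ\mathscr S_k)\cdot\mathscr X_k$, which vanishes on $\{\mathscr S_k<\beta_0\}\supset\mathscr M$, so the boundary values of $\gamma$ are literally fixed.
\end{itemize}
Finally, note that the paper proves Theorem~\ref{thm-for} without assuming a global primitive $\widehat{\mathscr S}_k$ exists; it works with the pulled-back primitives $\mathscr S^\gamma_k$ on $B^d$ defined in \eqref{eq:prim3}. The global primitive is used only for the ``Moreover'' clause.
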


\noindent In the next two subsections we prove Theorem \ref{thm-for}. First, we introduce some preliminary definitions and lemmas and then we present the core of the argument.

\subsection{Preliminary results}
We start by defining the \textit{variation} of the $1$-form $\alpha_k$ along any path $u:[a_0,a_1]\rightarrow \mathscr H$. It is the real number
\begin{align}\label{deltas}
\alpha_k(u)\,:=\ \int_{a_0}^{a_1}\alpha_k\left(\frac{du}{da}\right)(u(a))\,\di a\,.
\end{align}
We collect the properties of the variation along a path in a lemma.
\begin{lem}\label{lem:prim}
If $u$ is a path in $\mathscr H$ and $\overline{u}$ is the inverse path, we have
\begin{equation}
\alpha_k(\overline{u})\ =\ -\, \alpha_k(u)\,.
\end{equation}
If $u_1$ and $u_2$ are two paths in $\mathscr H$ such that the ending point of $u_1$ coincides with the starting point of $u_2$, we denote by $u_1\ast u_2$ the concatenation of the two paths and we have
\begin{equation}
\alpha_k(u_1\ast u_2)\ =\ \alpha_k(u_1)\ +\ \alpha_k(u_2)\,,
\end{equation}
If $u$ is a contractible closed path in $\mathscr H$, we have
\begin{equation}
\alpha_k(u)\ =\ 0\,.
\end{equation}
Finally, let $\gamma:Z\rightarrow \mathscr H$ be any smooth map from a Hilbert manifold $Z$ such that there exists a function $\mathscr S_k^\gamma: Z\rightarrow \mathscr H$ with the property that
\begin{equation}
d\,\mathscr S_k^\gamma\ =\ \gamma^*\alpha_k\,.
\end{equation}
Then, for all paths $z:[a_0,a_1]\rightarrow Z$ we have
\begin{equation}\label{eq:prim2}
\alpha_k(\gamma\circ z)\ =\ \mathscr S_k^\gamma(z(a_1))\ -\ \mathscr S_k^\gamma(z(a_0))\,.
\end{equation}
\end{lem}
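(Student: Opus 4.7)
The plan is to prove the four claims one at a time; each is a direct consequence of standard properties of line integrals of $1$-forms, combined with the hypotheses on $\alpha_k$ stated in Theorem \ref{thm-for}. There is no real obstacle here: the whole lemma is essentially bookkeeping about the functional $u \mapsto \alpha_k(u)$ defined in \eqref{deltas}, and will be used later to transport arguments from paths in $\mathscr H$ to paths in parameter spaces via the chain rule.

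For the first identity, I would parametrize the reverse path as $\overline u(a) = u(a_0 + a_1 - a)$ for $a \in [a_0, a_1]$, whence $\tfrac{d\overline u}{da}(a) = -\tfrac{du}{da}(a_0 + a_1 - a)$. Substituting into \eqref{deltas} and performing the change of variables $b = a_0 + a_1 - a$ (with $db = -da$) yields $\alpha_k(\overline u) = -\alpha_k(u)$. For the concatenation identity, after reparametrizing $u_1$ and $u_2$ on adjacent intervals $[a_0, a_1]$ and $[a_1, a_2]$, the result is the additivity of the Lebesgue integral over $[a_0, a_2] = [a_0, a_1] \cup [a_1, a_2]$; the value is independent of the reparametrizations, as follows from the change-of-variable formula applied exactly as in the first step.

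For the third identity, the statement is literally the first bullet in the hypotheses of Theorem \ref{thm-for}: the integral of $\alpha_k$ along any contractible loop in $\mathscr H$ vanishes, and $\alpha_k(u)$ is precisely this integral when $u$ is closed. Hence no separate argument is required.

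For the final identity, I would apply the chain rule. If $z : [a_0, a_1] \to Z$, then
\begin{equation*}
\tfrac{d}{da}(\gamma \circ z)(a) \;=\; d_{z(a)}\gamma \left( \tfrac{dz}{da}(a) \right),
\end{equation*}
so by the definition of the pullback,
\begin{equation*}
\alpha_k\!\left( \tfrac{d}{da}(\gamma \circ z) \right)\!(\gamma(z(a))) \;=\; (\gamma^* \alpha_k)\!\left( \tfrac{dz}{da} \right)\!(z(a)) \;=\; d_{z(a)} \mathscr S^\gamma_k \!\left( \tfrac{dz}{da}(a) \right) \;=\; \tfrac{d}{da}\bigl(\mathscr S^\gamma_k \circ z\bigr)(a),
\end{equation*}
where the second equality uses the hypothesis $d \mathscr S^\gamma_k = \gamma^* \alpha_k$. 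Integrating over $[a_0, a_1]$ and applying the fundamental theorem of calculus gives \eqref{eq:prim2}. The only mild subtlety is regularity: $\alpha_k$ is only Lipschitz, but the pullback $\gamma^* \alpha_k$ is a continuous $1$-form by smoothness of $\gamma$, and the function $\mathscr S_k^\gamma \circ z$ is absolutely continuous with almost-everywhere derivative given by the integrand above, so the fundamental theorem applies in the Lebesgue sense.
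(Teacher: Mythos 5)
Your proof is correct. The paper does not supply a proof of this lemma at all, treating it as standard bookkeeping about line integrals of a $1$-form along paths, and your four verifications are precisely the ones the author leaves to the reader: the change-of-variables computation for path reversal, additivity of the integral for concatenation, the direct invocation of the hypothesis that $\alpha_k$ integrates to zero over contractible loops, and the chain rule together with the fundamental theorem of calculus for the pullback identity. Your closing remark about regularity is appropriate and not fussy: since $\alpha_k$ is only assumed Lipschitz, $a\mapsto\mathscr S_k^\gamma\circ z(a)$ is Lipschitz rather than $C^1$, so the Lebesgue version of the fundamental theorem is indeed what is needed. (Incidentally, the paper's statement contains a typo — $\mathscr S_k^\gamma$ should map $Z\to\R$, not $Z\to\mathscr H$ — which you have correctly read through.)
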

Let us come back to the statement of Theorem \ref{thm-for}. Fix a point $\xi_*\in \partial B^d$ and for every $\gamma\in\mathscr G$ define the unique $\mathscr S_k^\gamma:B^d\rightarrow\mathscr H$ such that
\begin{equation}\label{eq:prim3}
d\,\mathscr S_k^\gamma\ =\ \gamma^*\alpha_k\,,\quad\quad \mathscr S_k^\gamma(\xi_*)\ =\ \mathscr S_k(\gamma(\xi_*))\,.
\end{equation}
We observe that this is a good definition since $B^d$ is simply connected and $\gamma(\xi_*)$ belongs to the domain of definition of $\mathscr S_k$ as $\gamma\in\mathscr G$. Moreover, if $\alpha_k$ admits a global primitive $\widehat {\mathscr S}_k$ on $\mathscr H$ extending $\mathscr S_k$, then clearly we have $\mathscr S_k^\gamma=\widehat{\mathscr S}_k\circ\gamma$. Finally, thanks to the previous lemma, for every $\xi\in B^d$ we have the formula
\begin{equation}\label{eq:primvar}
\mathscr S^\gamma_k(\xi)\ =\ \mathscr S_k(\gamma(\xi_*))\ +\ \alpha_k(\gamma\circ z_\xi)\,,
\end{equation}
where $z_\xi:[0,1]\rightarrow B^d$ is any path connecting $\xi_*$ and $\xi$.

\begin{rmk}
If $d\neq 1$, then $\mathscr S^\gamma_k$ does not depend on the choice of the point $\xi_*\in \partial B^d$ as $S^{d-1}=\partial B^d$ is connected. On the other hand, if $d=1$ there are two possible choices for $\xi_*$ and the two corresponding primitives of $\gamma^*\eta_k$ differ by a constant, which depends only on the class $\mathscr G$ and not on $\gamma$.
\end{rmk}

\begin{dfn}
We define the $\mathsf{minimax\ function}$ $c_{\mathscr G}:\mathscr I\rightarrow\R\cup\{-\infty\}$ by
\begin{align}
c_{\mathscr G}(k)\,:=\ \inf_{\gamma\in\mathscr G}\ \sup_{\xi\in B^d}\ \mathscr S_k^\gamma(\xi)\,.
\end{align}
\end{dfn}
\noindent In the next lemma we show that $c_{\mathscr G}(k)$ is finite and that, for each $\gamma\in\mathscr G$, the points almost realizing the supremum of the function $\mathscr S_k^\gamma$ lie in the complement of the set $\{\mathscr S_k\,<\,\beta_0\}$. 
\begin{lem}\label{lem:almax}
Let $k\in\mathscr I$ and $\gamma\in\mathscr G$. There holds
\begin{equation}\label{eq:bel}
\sup_{B^d}\ \mathscr S^\gamma_k\ \geq\ \beta_{\partial \mathscr V}\,.
\end{equation}
Moreover, if $\beta_1<\beta_{\partial\mathscr V}$, then $\forall\,\xi\in B^d$ the following implication holds
\begin{equation}\label{eq:almax}
\mathscr S_k^\gamma(\xi)\ \geq\ \sup_{B^d}\ \mathscr S_k^\gamma\ -\ (\beta_{\partial\mathscr V}-\beta_1)\quad\xRightarrow{\quad\ \ }\quad \gamma(\xi)\ \notin\ \{\mathscr S_k\,<\,\beta_1\}\,.
\end{equation}
\end{lem}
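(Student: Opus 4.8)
The plan is to compare the ``global'' primitive $\mathscr S^\gamma_k$, defined on all of $B^d$, with the ``local'' primitive $\mathscr S_k$, defined only on $\overline{\mathscr V}$, and to exploit the fact that $i^{\mathscr M}_{\mathscr V}(\mathscr G)$ being non-trivial forces $\gamma(B^d)\not\subset\mathscr V$. First some bookkeeping. From $\mathscr S_k=\mathscr S_{k_0}+(k-k_0)\mathscr T$ with $\mathscr T>0$ and $k_0\le k\le k_1$ we get $\mathscr S_{k_0}\le\mathscr S_k\le\mathscr S_{k_1}$ on $\overline{\mathscr V}$, hence $\inf_{\partial\mathscr V}\mathscr S_k\ge\inf_{\partial\mathscr V}\mathscr S_{k_0}=\beta_{\partial\mathscr V}$. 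Combining this with \eqref{eq:infbeta} and $\mathscr M\subset\{\mathscr S_{k_1}<\beta_0\}$ gives $\mathscr M\subset\{\mathscr S_k<\beta_0\}\subset\mathscr V$ (the last inclusion because $\beta_0<\beta_{\partial\mathscr V}$), so $\gamma(\partial B^d)\subset\mathscr V$ and $\gamma(\xi_*)\in\mathscr V$; likewise, since $\beta_1<\beta_{\partial\mathscr V}$, we have $\{\mathscr S_k<\beta_1\}\subset\mathscr V$.

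The heart of the argument is the following claim: \emph{for every $\xi'\in\gamma^{-1}(\mathscr V)$ there is a point $\xi_0\in B^d$ with $\gamma(\xi_0)\in\partial\mathscr V$ and $\mathscr S^\gamma_k(\xi')-\mathscr S_k(\gamma(\xi'))=\mathscr S^\gamma_k(\xi_0)-\mathscr S_k(\gamma(\xi_0))$.} To prove it, let $\Omega$ be the connected component of the open set $\gamma^{-1}(\mathscr V)$ containing $\xi'$. On $\Omega$ the map $\gamma$ takes values in $\mathscr V$, where $d\mathscr S_k=\alpha_k$, so $d(\mathscr S^\gamma_k-\mathscr S_k\circ\gamma)=\gamma^*\alpha_k-\gamma^*\alpha_k=0$ there; since $\Omega$ is connected, $\mathscr S^\gamma_k-\mathscr S_k\circ\gamma$ is constant on $\Omega$. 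If $\Omega=B^d$, then $\gamma(B^d)\subset\mathscr V$, so the class of $\gamma$ in $\big[(B^d,\partial B^d),(\mathscr H,\mathscr V)\big]$ — which is $i^{\mathscr M}_{\mathscr V}(\mathscr G)$ — would be trivial, a contradiction. Hence $\Omega\subsetneq B^d$, and since $B^d$ is connected the boundary $\partial_{B^d}\Omega$ of $\Omega$ within $B^d$ is non-empty; pick $\xi_0\in\partial_{B^d}\Omega$ and a sequence $\xi_n\in\Omega$ with $\xi_n\to\xi_0$. Then $\gamma(\xi_0)=\lim_n\gamma(\xi_n)\in\overline{\mathscr V}$, and $\gamma(\xi_0)\notin\mathscr V$ (otherwise $\xi_0$ would lie in an open component of $\gamma^{-1}(\mathscr V)$ meeting $\Omega$, hence equal to $\Omega$, forcing $\xi_0\in\Omega$), so $\gamma(\xi_0)\in\partial\mathscr V$. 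Letting $n\to\infty$ in the identity ``constant $=\mathscr S^\gamma_k(\xi_n)-\mathscr S_k(\gamma(\xi_n))$'' and using continuity of $\mathscr S^\gamma_k$ on $B^d$ and of $\mathscr S_k$ on $\overline{\mathscr V}$ yields the claim.

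Now \eqref{eq:bel} and \eqref{eq:almax} follow quickly. Applying the claim to $\xi'=\xi_*$ and using $\mathscr S^\gamma_k(\xi_*)=\mathscr S_k(\gamma(\xi_*))$ from \eqref{eq:prim3}, we obtain $\mathscr S^\gamma_k(\xi_0)=\mathscr S_k(\gamma(\xi_0))\ge\inf_{\partial\mathscr V}\mathscr S_k\ge\beta_{\partial\mathscr V}$, so $\sup_{B^d}\mathscr S^\gamma_k\ge\beta_{\partial\mathscr V}$, which is \eqref{eq:bel}. For \eqref{eq:almax} we argue by contraposition: assume $\gamma(\xi)\in\{\mathscr S_k<\beta_1\}$, so in particular $\gamma(\xi)\in\mathscr V$, and apply the claim to $\xi'=\xi$ to get $\xi_0$ with $\gamma(\xi_0)\in\partial\mathscr V$; then, using $\mathscr S_k(\gamma(\xi))<\beta_1$, $\mathscr S^\gamma_k(\xi_0)\le\sup_{B^d}\mathscr S^\gamma_k$ and $\mathscr S_k(\gamma(\xi_0))\ge\beta_{\partial\mathscr V}$,
\[
\mathscr S^\gamma_k(\xi)=\mathscr S_k(\gamma(\xi))+\mathscr S^\gamma_k(\xi_0)-\mathscr S_k(\gamma(\xi_0))\ <\ \sup_{B^d}\mathscr S^\gamma_k-(\beta_{\partial\mathscr V}-\beta_1),
\]
which is exactly the negation of the hypothesis of the implication in \eqref{eq:almax}; hence the implication holds.

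The only genuinely delicate point is the topological step inside the claim: that a proper open component of $\gamma^{-1}(\mathscr V)$ must have a boundary point of $B^d$ whose $\gamma$-image lies on $\partial\mathscr V$, and that the locally constant difference $\mathscr S^\gamma_k-\mathscr S_k\circ\gamma$ extends by continuity to that point — which is where one needs $\mathscr S_k$ to be continuous up to $\partial\mathscr V$, i.e.\ on $\overline{\mathscr V}$ (as holds in the intended application). Everything else is bookkeeping with the sign of $\mathscr T$ and with the ordering of $\beta_0$, $\beta_1$, $\beta_{\partial\mathscr V}$; in particular, the component argument treats the case $d=1$, where $\partial B^d$ is disconnected, on the same footing as $d\ge2$.
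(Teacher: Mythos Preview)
Your proof is correct and follows essentially the same approach as the paper: both exploit that $\mathscr S^\gamma_k$ and $\mathscr S_k\circ\gamma$ are primitives of $\gamma^*\alpha_k$ on $\gamma^{-1}(\mathscr V)$, hence differ by a constant on each connected component, and both locate a point $\xi_0$ with $\gamma(\xi_0)\in\partial\mathscr V$ via the non-triviality of $i^{\mathscr M}_{\mathscr V}(\mathscr G)$. The only cosmetic difference is that the paper phrases this via explicit paths and the variation formula \eqref{eq:prim2}--\eqref{eq:primvar}, while you use a connected-component argument; your formulation has the mild advantage of making the paper's ``without loss of generality'' (for the path from $\xi$ to $\widehat\xi$ inside $\gamma^{-1}(\mathscr V)$) fully transparent.
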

\begin{proof}
Since  $i^{\mathscr M}_{\mathscr V}(\mathscr G)$ is non-trivial, the set $\{\xi\in B^d\,|\, \gamma(\xi)\in\partial \mathscr V\}$ is non-empty. Therefore, there exists an element $\widehat\xi$ in this set and a path $z_{\widehat\xi}:[0,1]\rightarrow B^d$ from $\xi_*$ to $\widehat\xi$ such that $\gamma\circ z_{\widehat\xi}|_{[0,1)}\subset \mathscr V$. By \eqref{eq:primvar} and \eqref{eq:prim2} we have
\begin{equation*}
\mathscr S^\gamma_k(\widehat\xi)\ =\ \mathscr S_k(\gamma(\xi_*))\,+\,\alpha_k(\gamma\circ z_{\widehat\xi})\ =\ \mathscr S_k(\gamma(\xi_*))\,+\,\Big(\mathscr S_k(\gamma(\widehat\xi))-\mathscr S_k(\gamma(\xi_*))\Big)\ =\ \mathscr S_k(\gamma(\widehat\xi))\,,
\end{equation*}
which implies \eqref{eq:bel} by \eqref{eq:infbeta}. In order to prove the second statement we consider $\xi\in B^d$ such that $\gamma(\xi)\in\{\mathscr S_k\,<\beta_1\}$. Without loss of generality there exists a path $z_{\xi,\widehat\xi}:[0,1]\rightarrow B^d$ from $\xi$ to $\widehat\xi$ such that $z_{\xi,\widehat\xi}|_{[0,1)}\subset\mathscr V$. Using \eqref{eq:prim2} twice, we compute
\begin{align*}
\sup_{B_d}\mathscr S^\gamma_k\ \geq\ \mathscr S^\gamma_k(\widehat\xi)\ =\ \mathscr S_k^\gamma(\xi)\,+\,\alpha_k(\gamma\circ z_{\xi,\widehat\xi})\ &=\ \mathscr S_k^\gamma(\xi)\,+\,\Big(\mathscr S_k(\gamma(\widehat\xi))-\mathscr S_k(\gamma(\xi))\Big)\\
&>\ \mathscr S_k^\gamma(\xi)\,+\,\big(\beta_{\partial \mathscr V}-\beta_1\big)\,,
\end{align*}
which yields the contrapositive of the implication we had to show.
\end{proof}
We now see that, since the family $k\mapsto\alpha_k$ is monotone in the parameter $k$, the same is true for the numbers $c_{\mathscr G}(k)$.
\begin{lem}
If $k_2\leq k_3$ and $\gamma\in\mathscr G$, we have
\begin{equation}\label{fin-dif}
\mathscr S^\gamma_{k_3}\ =\ \mathscr S^\gamma_{k_2}\ +\ (k_3-k_2)\,\mathscr T\circ \gamma\,.
\end{equation}
As a consequence, $c_{\mathscr G}$ is a non-decreasing function.
\end{lem}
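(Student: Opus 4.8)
The plan is to establish the pointwise identity \eqref{fin-dif} first, and then read off the monotonicity of $c_{\mathscr G}$ as an immediate corollary. Both sides of \eqref{fin-dif} are functions on the connected manifold $B^d$, so it suffices to check that they have the same differential and that they agree at the single point $\xi_*\in\partial B^d$ used to normalise the primitives in \eqref{eq:prim3}.

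For the differentials: by \eqref{eq:prim3} we have $d\mathscr S^\gamma_{k_j}=\gamma^*\alpha_{k_j}$ for $j=2,3$. Subtracting the two instances of the hypothesis $\alpha_k=\alpha_{k_0}+(k-k_0)\,d\mathscr T$ gives $\alpha_{k_3}=\alpha_{k_2}+(k_3-k_2)\,d\mathscr T$, and since pullback commutes with $d$ this yields $d\mathscr S^\gamma_{k_3}=d\mathscr S^\gamma_{k_2}+(k_3-k_2)\,d(\mathscr T\circ\gamma)=d\big(\mathscr S^\gamma_{k_2}+(k_3-k_2)\,\mathscr T\circ\gamma\big)$. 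For the values at $\xi_*$: the normalisation in \eqref{eq:prim3} gives $\mathscr S^\gamma_{k_j}(\xi_*)=\mathscr S_{k_j}(\gamma(\xi_*))$, and because $\gamma\in\mathscr G$ the point $\gamma(\xi_*)$ lies in $\mathscr M\subset\overline{\mathscr V}$, which is the domain of every $\mathscr S_k$; there the second relation in \eqref{eq:prim} gives $\mathscr S_{k_3}(\gamma(\xi_*))=\mathscr S_{k_2}(\gamma(\xi_*))+(k_3-k_2)\,\mathscr T(\gamma(\xi_*))$. Equality of differentials together with agreement at one point proves \eqref{fin-dif}. (Alternatively, one substitutes the decompositions of $\alpha_k$ and $\mathscr S_k$ directly into \eqref{eq:primvar}, using \eqref{eq:prim2} with $\mathscr T\circ\gamma$ as a primitive of $\gamma^*\,d\mathscr T$ on $B^d$ to evaluate the variation of $d\mathscr T$ along $\gamma\circ z_\xi$ as $\mathscr T(\gamma(\xi))-\mathscr T(\gamma(\xi_*))$.)

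The consequence is then immediate: since $\mathscr T$ takes values in $(0,+\infty)$, for $k_2\le k_3$ identity \eqref{fin-dif} gives $\mathscr S^\gamma_{k_3}(\xi)\ge\mathscr S^\gamma_{k_2}(\xi)$ for every $\xi\in B^d$ and every $\gamma\in\mathscr G$; taking $\sup_{\xi\in B^d}$ and then $\inf_{\gamma\in\mathscr G}$ preserves this inequality, so $c_{\mathscr G}(k_2)\le c_{\mathscr G}(k_3)$. There is no genuinely hard step here; the only points that need a little care are the bookkeeping that $\gamma(\xi_*)$ really lies in the common domain $\overline{\mathscr V}$, so that the relation between $\mathscr S_{k_2}$ and $\mathscr S_{k_3}$ may be applied at that point, and — if one uses \eqref{eq:primvar} instead of the differential argument — the sign conventions in the definition \eqref{deltas} of the variation $\alpha_k(u)$.
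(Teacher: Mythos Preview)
Your proof is correct and follows essentially the same approach as the paper: both verify \eqref{fin-dif} by checking that the two sides have the same differential (via $\alpha_{k_3}-\alpha_{k_2}=(k_3-k_2)\,d\mathscr T$) and the same value at $\xi_*$ (via \eqref{eq:prim}), then invoke uniqueness of the primitive on the connected domain $B^d$; the monotonicity of $c_{\mathscr G}$ is deduced identically from $\mathscr T>0$.
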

\begin{proof}
We observe that
\begin{align*}
\bullet&\ \ d\big(\mathscr S^\gamma_{k_3}\,-\,\mathscr S^\gamma_{k_2}\big)\ =\ \gamma^*\big(\alpha_{k_3}-\alpha_{k_2}\big)\ =\ \gamma^*\big((k_3-k_2)\, d\mathscr T\big)\\
\bullet&\ \ \mathscr S^\gamma_{k_3}(\xi_*)\,-\, \mathscr S^\gamma_{k_2}(\xi_*)\ =\ \mathscr S_{k_3}(\gamma(\xi_*))\,-\mathscr S_{k_2}(\gamma(\xi_*))\ =\ (k_3-k_2)\,\mathscr T(\gamma(\xi_*))\,.
\end{align*}
These two equalities imply that the function $\mathscr S_{k_2}^\gamma+(k_3-k_2)\mathscr T\circ\gamma$ satisfies \eqref{eq:prim3} with $k=k_3$. Since these conditions identify a unique function, equation \eqref{fin-dif} follows. In particular, we have $\mathscr S^\gamma_{k_2}\leq\mathscr S^\gamma_{k_3}$. Taking the inf-sup of this inequality on $\mathscr G$, we get $c_{\mathscr G}(k_2)\leq c_{\mathscr G}(k_3)$.
\end{proof}

We end this subsection by adjusting the vector field $\mathscr X_{k}$ so that its flow becomes positively complete on all $\mathscr H$. We fix $\beta_1\in(\beta_0,\beta_{\partial \mathscr V})$ and let $\mathscr B:[\beta_0,\beta_1]\rightarrow[0,1]$ be a function that is equal to $0$ in a neighbourhood of $\beta_0$ and equal to $1$ in a neighbourhood of $\beta_1$. We set
\begin{equation*}
\check {\mathscr X}_{k}\,:=\ (\mathscr B\circ\mathscr S_{k})\cdot \mathscr X_{k}\,\in\,\Gamma(\mathscr H)\,.
\end{equation*}
We observe that 
\begin{equation*}
\bullet\ \check {\mathscr X}_{k}\,=\,0\ \ \mbox{on}\ \ \{\mathscr S_{k}<\beta_0\}\,,\quad\quad\bullet\ \check {\mathscr X}_{k}\,=\,\mathscr X_{k}\ \ \mbox{on}\ \ \mathscr H\setminus\{\mathscr S_{k}<\beta_1\}\,,
\end{equation*}
and, hence, the flow $\Phi^{\check {\mathscr X}_{k}}$ is positively complete.
\bigskip

\subsection{Proof of Theorem \ref{thm-for}}

Let us define the subset 
\begin{equation*}
\mathscr I_*:=\, \Big\{\,k\in[k_0,k_1)\ \Big|\ \exists\, C(k_*)\, \mbox{ such that }\, c_{\mathscr G}(k)-c_{\mathscr G}(k_*)\, \leq\,  C(k_*)(k-k_*)\,,\ \forall\, k\in[k_*,k_1]\,\Big\}\,.
\end{equation*}
Namely, $\mathscr I_*$ is the set of points at which $c_{\mathscr G}$ is Lipschitz-continuous on the right. Since $c_{\mathscr G}$ is a non-decreasing real function, by Lebesgue Differentiation Theorem, $c_{\mathscr G}$ is Lipschitz-continuous at almost every point. In particular, $\mathscr I\setminus\mathscr I_*$ has measure zero. 

We are now ready to show that
\begin{enumerate}
 \item for all $k\in\mathscr I$, there exists a vanishing sequence $(h^k_m)_{m\in\N}\subset\mathscr H\setminus\{\mathscr S_k<\beta_0\}$ and that
 \item for all $k_*\in\mathscr I_*$, such vanishing sequence can be taken to satisfy 
\begin{equation*}
\sup_{m\in\N}\mathscr T(h^{k_*}_m)\ <\ C(k_*)\ +\ 3\,.
\end{equation*}
\end{enumerate}
We will prove only the statement about the vanishing sequences with parameter in $\mathscr I_*$, as the argument can be easily adapted to prove the statement for a general parameter in $\mathscr I$.

We assume by contradiction that there exists a positive number $\varepsilon_0$ such that
\begin{equation}
|\alpha_{k_*}|\ \geq\ \varepsilon_0\,,\quad \mbox{on}\ \ \{\mathscr T\,<\,C(k_*)+3\}\setminus\{\mathscr S_{k_*}\,<\,\beta_1\}\,. 
\end{equation}
Consider a decreasing sequence $(k_m)_{m\in\N}\subset(k_*,k_1]$ such that $k_m\rightarrow k_*$. Set $\delta_m:=k_m-k_*$ and take a corresponding sequence $(\gamma_m)_{m\in\N}\subset\mathscr G$ such that
\begin{equation*}
\sup_{B^d}\mathscr S^{\gamma_m}_{k_m}\ <\ c_{\mathscr G}(k_m)\ +\ \delta_m\,.
\end{equation*}
For every $\xi\in B^d$ we consider the sequence of flow lines 
\begin{align*}
u_m^\xi:[0,1]&\longrightarrow\mathscr H\\
r&\longmapsto \Phi^{\check {\mathscr X}_{k_*}}_r(\gamma_m(\xi))\,.
\end{align*}
Conversely, for any time parameter $r\in[0,1]$, we get the map
\begin{equation}
\gamma^r_m\,:=\ \Phi^{\check {\mathscr X}_{k_*}}_r(\gamma_m)\,.
\end{equation}
We readily see that $\gamma^r_m|_{\partial B^d}=\gamma_m|_{\partial B^d}$ and $\gamma^r_m\in\mathscr G$. In particular, for every $\xi\in B^d$ and $r\in [0,1]$ the concatenated curve
\begin{equation}
\big(\gamma_m\circ z_{\xi}\big)\ \ast\ u^\xi_m|_{[0,r]}\ \ast\ \big(\overline{\gamma^r_m\circ z_\xi}\big)
\end{equation}
is contractible. Therefore, Lemma \ref{lem:prim} and Equation \eqref{eq:primvar} yield
\begin{equation}\label{deltahomo}
\mathscr S_{k_*}^{\gamma^r_m}(\xi)\ =\ \mathscr S_{k_*}^{\gamma_m}(\xi)\ +\ \alpha_{k_*}(u_m^\xi|_{[0,r]})\,.
\end{equation}
Finally, since $u_m^\xi$ is a flow line, we have
\begin{equation}\label{deltav}
\alpha_{k_*}(u_m^\xi|_{[0,r]})=\int_0^r\alpha_{k_*}\left(-\frac{\mathscr B\cdot\sharp\alpha_{k_*}}{\sqrt{1+|\alpha_{k_*}|^2}}\right)\!(u_m^\xi(\rho))\,\di \rho=-\int_0^r\frac{\mathscr B\cdot|\alpha_{k_*}|^2}{\sqrt{1+|\alpha_{k_*}|^2}}(u_m^\xi(\rho))\,\di \rho\,.
\end{equation}
Therefore $\alpha_{k_*}(u_m^\xi|_{[0,r]})\leq0$ and we find that, for every $m\in\N$,
\begin{equation}\label{eq:noninc}
r\longmapsto \mathscr S^{\gamma^r_m}_{k_*}\quad \mbox{is a non-increasing family of functions on } B^d\,.
\end{equation}

Let us estimate the supremum of $\mathscr S_{k_*}^{\gamma^r_m}$. When $r=0$, \eqref{fin-dif} and the definition of $\mathscr I_*$ imply:
\begin{equation}\label{supa0}
\sup_{B^d}\mathscr S_{k_*}^{\gamma_m}\ \leq\ \sup_{B^d}\mathscr S_{k_m}^{\gamma_m}\ <\ c_{\mathscr G}(k_m)+\delta_m\ \leq\ c_{\mathscr G}(k_*)+(C(k_*)+1)\,\delta_m\,.
\end{equation}
Thus, by \eqref{eq:noninc} we get, for every $r\in[0,1]$,
\begin{equation}\label{supuma}
\sup_{B^d}\mathscr S_{k_*}^{\gamma_m^r}\ <\ c_{\mathscr G}(k_*)+(C(k_*)+1)\,\delta_m\,.
\end{equation}

If $r\in[0,1]$, we define the sequence of subsets of $B^d$
\begin{align*}
J^r_m\,:&=\ \big\{\,\mathscr S_{k_*}^{\gamma^r_m}\ >\ c_{\mathscr G}(k_*)\,-\,\delta_m\,\big\}\,.
\end{align*}
Let us give a closer look to these sets. First, we observe that if $\xi\in J^r_m$, then \eqref{deltahomo} and \eqref{supuma} imply that
\begin{equation}\label{eq:vardec}
\alpha_{k_*}(u^\xi_m|_{[0,r]})\ >\ c_{\mathscr G}(k_*)-\delta_m\ -\ \big(\,c_{\mathscr G}(k_*)+(C(k_*)+1)\,\delta_m\,\big)\ =\ -\,(C(k_*)+2)\,\delta_m\,.
\end{equation}
Then, we claim that for $m$ large enough
\begin{equation}
\xi\in J^r_m\quad\Longrightarrow\quad \gamma^r_m(\xi)\ \in\ \big\{\mathscr T<C(k_*)+3\big\}\setminus\big\{\mathscr S_{k_*}<\beta_1\big\}\,,\quad\forall\,r\in[0,1]\,.
\end{equation}
First, we observe that
\begin{equation}
\mathscr S_{k_*}^{\gamma_m^r}(\xi)\ >\ c_{\mathscr G}(k_*)\ -\ \delta_m\ \geq\ \sup_{B^d}\mathscr S_{k_*}^{\gamma_m^r}\ -\ (C(k_*)+2)\,\delta_m\,.
\end{equation}
If $m$ is large enough, then $(C(k_*)+2)\,\delta_m<(\beta_{\partial \mathscr V}-\beta_1)$ and Lemma \ref{lem:almax} implies that $\gamma^r_m(\xi)\notin\{\mathscr S_{k_*}<\beta_1\}$. As a by-product we get that $u^\xi_m|_{[0,r]}$ is a genuine flow line of $\Phi^{{\mathscr X}_{k_*}}$. Then, we estimate $\mathscr T(\gamma^r_m(\xi))$. We start by taking $r=0$. In this case from \eqref{fin-dif} we get
\begin{equation*}
\mathscr T(\gamma_m(\xi))\ =\ \frac{\mathscr S_{k_m}^{\gamma_m}(\xi)-\mathscr S_{k_*}^{\gamma_m}(\xi)}{\delta_m}\ <\ \frac{c_{\mathscr G}(k_m)+\delta_m-c_{\mathscr G}(k_*)+\delta_m}{\delta_m}\ <\  C(k_*)+2\,.
\end{equation*}
To prove the inequality for arbitrary $r$ we bound the variation of $\mathscr T$ along $u^\xi_m|_{[0,r]}$ in terms of the action variation:
\begin{align*}
 -\alpha_{k_*}(u_m^\xi|_{[0,r]})\ =\ -\int_0^{r}\alpha_{k_*}\left(\frac{du^\xi_m}{d\rho}\right)\di \rho\ &\geq\ \int_0^{r}\left|\frac{du^\xi_m}{d\rho}\right|^2\di \rho\\
 &\geq\ \frac{1}{r}\left(\int_0^{r}\left|\frac{du^\xi_m}{d\rho}\right|\di \rho\right)^2\\
 &\geq\ \frac{1}{r}\left(\int_0^{r}\frac{1}{1+\sup_\mathscr H|d\mathscr T|}\left|\frac{d(\mathscr T\circ u^\xi_m)}{d\rho}\right|\di \rho\right)^2\\
 &\geq\ \frac{1}{r(1+\sup_\mathscr H|d\mathscr T|)^2}|\mathscr T(u^\xi_m(r))-\mathscr T(u^\xi_m(0))|^2\,.
\end{align*}
Using \eqref{eq:vardec} and rearranging the terms we get for $m$ large enough
\begin{equation*}
|\mathscr T(\gamma^r_m(\xi))-\mathscr T(\gamma_m(\xi))|^2\ \leq\ r\cdot(1+\sup_\mathscr H|d\mathscr T|)^2\cdot (C(k_*)+2)\,\delta_m\ <\ 1\,.
\end{equation*}
Hence, if $m$ is large enough the bound on $\mathscr T$ we were looking for follows from
\begin{equation}
\mathscr T(\gamma^r_m(\xi))\ \leq\ \mathscr T(\gamma_m(\xi))\ +\ |\mathscr T(\gamma^r_m(\xi))-\mathscr T(\gamma_m(\xi))|\ <\ (C(k_*)+2)\ +\ 1\,.
\end{equation}
The claim is thus completely established.

The last step to finish the proof of Theorem \ref{thm-for} is to show that $J^1_m=\emptyset$ for $m$ large enough. By contradiction, let $\xi\in J^1_m$. Since $\xi\in J^r_m$ for all $r\in[0,1]$, we see that $u_m^\xi$ is a flow line of $\Phi^{\mathscr X_{k_*}}$ contained in $\{\mathscr T<C(k_*)+3\}\setminus\{\mathscr S_{k_*}<\beta_1\}$. Using \eqref{eq:vardec} and continuing the chain of inequalities in \eqref{deltav}, we find
\begin{equation*}
-\,(C(k_*)+2)\,\delta_m\ <\ \alpha_{k_*}(u_m^\xi)\ \leq\ -\,\frac{\varepsilon_0^2}{\sqrt{1+\varepsilon_0^2}}
\end{equation*}
(where we used that the real function $w\mapsto \frac{w}{\sqrt{1+w}}$ is increasing). Such inequality cannot be satisfied for $m$ large, proving that the sets $J^1_m$ become eventually empty.

Finally, since $J^1_m=\emptyset$, we obtain that $c_{\mathscr G}(k_*)\leq\sup_{B^d}\mathscr S_{k_*}^{\gamma^1_m}\leq c_{\mathscr G}(k_*)-\delta_m$. This contradiction finishes the proof of Theorem \ref{thm-for}.
\medskip

In the next section we will determine when $\eta_k$ satisfies the hypotheses of the abstract theorem we have just proved.
\bigskip

\section{Proof of the Main Theorem}\label{sec:geo}
We now move to the proof of points \textit{(1)}, \textit{(2)}, \textit{(3)} of Theorem \ref{thm:main}. In the first preparatory subsection, we will see when the action form is exact.
\subsection{Primitives for $\eta_k$}
We know that $\eta_k$ is exact if and only if so is $\tau^\sigma$. The next proposition, whose simple proof we omit, gives necessary and sufficient conditions for the transgression form to be exact.
\begin{prp}
If $[\widetilde\sigma]\neq0$, then $\tau^\sigma|_{W^{1,2}_\nu}$ is not exact for any $\nu$.

If $[\widetilde\sigma]=0$, then
\begin{align*}
\widehat{P}^\sigma:W^{1,2}_0&\longrightarrow\R\,.\\
x&\longmapsto \int_{B^2}\hat u_x^*\sigma
\end{align*}
is a primitive for $\tau^\sigma$. Here $\hat u_x$ is any capping disc for $x$. This definition extends the primitive $P^\sigma$, which we constructed on the subset of short loops.

If $[\widetilde\sigma]_b=0$, then, given $\nu$ and a reference loop $x_\nu\in W^{1,2}_\nu$,
\begin{align*}
\widehat{P}^\sigma:W^{1,2}_\nu&\longrightarrow\R\,.\\
x&\longmapsto \int_{B^2}\hat u_{x_\nu,x}^*\sigma
\end{align*}
is a primitive for $\tau^\sigma$. Here $\hat u_{x_\nu,x}$ is a connecting cylinder from $x_\nu$ to $x$. If we take $x_0$ as a constant loop, the two definitions of $\widehat{P}^\sigma$ coincide on $W^{1,2}_0$.
\end{prp}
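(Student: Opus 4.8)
The plan is to handle the three assertions separately, each time exploiting the identity $\int_0^1 u^*\tau^\sigma=\int_{[0,1]\times\T}\hat u^*\sigma$ established above, which converts statements about the periods of the closed form $\tau^\sigma$ into statements about integrals of $\sigma$ over cylinders, tori and spheres in $M$. Throughout I use that, since each $W^{1,2}_\nu$ is connected, a closed $1$-form on it is exact if and only if its integral over every loop $a\mapsto x_a$ ($a\in\T$) vanishes; that, by the characterization of weak exactness recalled earlier, $[\widetilde\sigma]\neq 0$ is equivalent to the existence of a map $u:S^2\to M$ with $\int_{S^2}u^*\sigma\neq 0$; and that $[\widetilde\sigma]_b=0$ forces $\int_{\T^2}u^*\sigma=0$ for every $u:\T^2\to M$ (last point of Exercise~\ref{ex:sup}).

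\emph{The case $[\widetilde\sigma]\neq 0$.} It suffices to produce, for each $\nu$ and each reference loop $x_\nu$, a single loop in $W^{1,2}_\nu$ on which $\tau^\sigma$ has nonzero integral. I would pick $u:S^2\to M$ with $\int_{S^2}u^*\sigma\neq0$ and write $S^2$ as the union of its latitude circles through a pole $p$; this exhibits $u$ as a loop of based loops at $p$, degenerating to constant loops at the two poles. Conjugating this family by a fixed path $\beta$ from $p$ to $x_\nu(0)$ and concatenating with $x_\nu$ yields a loop $a\mapsto x_a$ in $W^{1,2}_\nu$, $a\in\T$. In the associated torus $\hat u(a,t)=x_a(t)$ the subrectangles corresponding to the fixed pieces $\beta$, $\overline\beta$ and $x_\nu$ are degenerate in $a$ and contribute nothing, while the subrectangle corresponding to the latitude family sweeps $u(S^2)$ exactly once, so the integral of $\tau^\sigma$ over this loop equals $\int_{\T^2}\hat u^*\sigma=\int_{S^2}u^*\sigma\neq0$, whence $\tau^\sigma|_{W^{1,2}_\nu}$ is not exact.

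\emph{The cases $[\widetilde\sigma]=0$ and $[\widetilde\sigma]_b=0$.} These run in parallel, so I would write out the weakly exact one and indicate the change. First one checks $\widehat P^\sigma$ is well defined: two capping discs of the same $x\in W^{1,2}_0$ glue along $x$ to a sphere $v:S^2\to M$, and the difference of the two disc integrals equals $\pm\int_{S^2}v^*\sigma=0$ by weak exactness. To see $d\widehat P^\sigma=\tau^\sigma$ one verifies the cocycle identity: given a path $u:[0,1]\to W^{1,2}_0$, glue the cylinder $\hat u$ to a cap of $u(0)$ to obtain a cap of $u(1)$, so $\widehat P^\sigma(u(1))-\widehat P^\sigma(u(0))=\int_{[0,1]\times\T}\hat u^*\sigma=\int_0^1 u^*\tau^\sigma$. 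For the bounded case one replaces ``capping disc of $x$'' by ``connecting cylinder from $x_\nu$ to $x$'': two such cylinders glue to a torus whose vertical circles lie in $\nu$, and the difference of integrals vanishes because $\int_{\T^2}u^*\sigma=0$; the primitive property is then proved by the same gluing. Finally, for short $x$ the deformation retraction of Lemma~\ref{def-ret} produces a distinguished capping disc realizing $P^\sigma(x)$, and since $\widehat P^\sigma$ is independent of the chosen disc, $\widehat P^\sigma=P^\sigma$ on $\{\ell<\delta_*\}$; and taking $\nu=0$ with $x_0$ a constant loop turns connecting cylinders into capping discs, so the two definitions of $\widehat P^\sigma$ agree on $W^{1,2}_0$.

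The only genuine difficulty — and the reason the proof is ``simple'' enough to omit — is a regularity bookkeeping issue: the capping discs, connecting cylinders and families of loops used above must be chosen smooth (or Lipschitz) enough that the integrals $\int\hat u^*\sigma$ are defined and that Stokes and the gluing formulas apply. Since homotopic discs rel boundary, and homotopic cylinders, give the same $\sigma$-integral, this is dispatched by a standard smoothing argument; once it is in place, every remaining step is a direct application of the cylinder identity above.
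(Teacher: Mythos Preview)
Your proposal is correct. The paper explicitly omits the proof of this proposition (``whose simple proof we omit''), so there is no argument to compare yours against; your approach---reducing everything to the cylinder identity $\int_0^1 u^*\tau^\sigma=\int_{[0,1]\times\T}\hat u^*\sigma$ and then invoking sphere-vanishing for $[\widetilde\sigma]=0$ and torus-vanishing (from Exercise~\ref{ex:sup}) for $[\widetilde\sigma]_b=0$---is exactly the intended one.

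One small wording issue in the first part: ``latitude circles through a pole $p$'' is ambiguous, and if you literally use the latitude decomposition of $S^2$ the two endpoint loops are constant at \emph{different} poles, so you do not immediately get a closed loop in $W^{1,2}_\nu$. The clean fix is to use the standard adjunction $\pi_2(M,p)\cong\pi_1(\Omega_pM,\mathrm{const}_p)$, i.e.\ parametrize $S^2$ as $[0,1]\times\T$ with the whole boundary collapsed to $p$; then the family $a\mapsto c_a$ is a genuine loop of based loops at $p$, and your concatenation with $\beta$ and $x_\nu$ goes through verbatim. This is cosmetic and does not affect the substance of your argument.
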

\begin{exe}
Show that if $M=\T^2$ and $[\sigma]\neq0$, then $\tau^\sigma|_{W^{1,2}_\nu}$ is not exact if $\nu\neq0$.
\end{exe}
We set $\widehat{S}_k:=A_k-\widehat{P}^\sigma\circ\op{pr}_{W^{1,2}}$ in the two cases above where $\widehat{P}^\sigma$ is defined. Theorem A in \cite{CIPP98} tells us when $\widehat{S}_k$ is bounded from below.
\begin{prp}\label{prp-below}
If $[\widetilde\sigma]=0$, then $\widehat{S}_k:\Lambda_0\rightarrow\R$ is bounded from below if and only if $k\geq c(L,\sigma)$. If $[\widetilde\sigma]_b=0$, the same is true for $\widehat{S}_k:\Lambda_\nu\rightarrow\R$.
\end{prp}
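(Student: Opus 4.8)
The plan is to recognize that the function $\widehat{S}_k$, written out in terms of a global primitive $\widetilde\theta$ for $\widetilde\sigma$ on the universal cover, coincides up to a constant with the \emph{free-time action functional of the Lagrangian $L-\widetilde\theta$}, whose boundedness-below property at a given energy level is exactly the content of Theorem A in \cite{CIPP98}. So the first step is bookkeeping: fix a weak primitive $\widetilde\theta$ of $\widetilde\sigma$ (bounded in the case $[\widetilde\sigma]_b=0$; merely globally defined in the case $[\widetilde\sigma]=0$, which as noted in Exercise \ref{ex:sup} is more restrictive than it first appears), and observe that for a loop $x$ with a chosen lift $\widetilde x$ (respectively a capping disc / connecting cylinder), Stokes' theorem gives $\widehat{P}^\sigma(x)=\int \widetilde x^*\widetilde\theta$ up to a term depending only on the homotopy class and the reference loop. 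Hence
\begin{equation*}
\widehat{S}_k(x,T)\ =\ \int_0^T\Big[(L-\widetilde\theta)(\widetilde x,\dot{\widetilde x})+k\Big]\di t\ +\ \text{const}(\nu)\,,
\end{equation*}
so that $\widehat{S}_k$ is bounded below on $\Lambda_\nu$ if and only if the free-time action of the Tonelli Lagrangian $\widetilde L:=L-\widetilde\theta$ at energy $k$ is bounded below over loops in $\widetilde M$ that project to class $\nu$.

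The second step is to invoke \cite{CIPP98}: for an autonomous Tonelli Lagrangian $\widetilde L$ on a (non-compact but suitably controlled) manifold, the free-time action $\int(\widetilde L+k)$ is bounded below on the space of closed curves if and only if $k$ is at least the Ma\~n\'e critical value, which here is precisely $c(L,\sigma)=\inf_{d\widetilde\theta=\widetilde\sigma}\sup_{\widetilde q}\widetilde H(\widetilde q,\widetilde\theta_{\widetilde q})$ by definition \eqref{mandef}. The equivalence ``$k\geq c(L,\sigma)\iff$ bounded below'' then follows: if $\sup_{\widetilde M}\widetilde H(\cdot,\widetilde\theta)\leq k$ then pointwise $\widetilde L(\widetilde q,v)-\widetilde\theta_{\widetilde q}(v)+k\geq k-\widetilde H(\widetilde q,\widetilde\theta_{\widetilde q})\geq 0$ by the Fenchel inequality, so the action is $\geq 0$ (hence bounded below) for \emph{every} primitive realizing the sup $\leq k$; taking the infimum over primitives gives boundedness whenever $k>c(L,\sigma)$, and a limiting argument plus the lower semicontinuity built into \eqref{mandef} handles $k=c(L,\sigma)$. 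Conversely if $k<c(L,\sigma)$, then for every weak primitive there is a point where $\widetilde H(\widetilde q,\widetilde\theta_{\widetilde q})>k$, and one produces a sequence of loops (short loops concentrated near such a point, traversed many times, à la the standard Ma\~n\'e argument) whose action tends to $-\infty$; the independence of $\widehat{P}^\sigma$ of the chosen primitive guarantees this is genuinely an obstruction.

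The third, more technical step, is to check that the boundedness really depends only on the free homotopy class and not on auxiliary choices (the reference loop $x_\nu$, the connecting cylinder), which is where the hypothesis $[\widetilde\sigma]=0$ or $[\widetilde\sigma]_b=0$ enters: it is exactly what makes $\widehat{P}^\sigma$ well-defined in the first place (the previous proposition), and changing the reference data alters $\widehat{S}_k$ by a constant, leaving boundedness below unaffected. In the weakly-exact-but-not-bounded case $[\widetilde\sigma]=0$, one must also be a little careful that $\widehat{P}^\sigma$ on $\Lambda_0$ may be unbounded on fibers of $\op{pr}_{W^{1,2}}$ even though it is well-defined; but the estimate \eqref{psigma} near short loops combined with the quadratic-at-infinity growth \eqref{est-quad} of $A_k$ keeps the comparison with the Ma\~n\'e functional intact on the relevant sublevel sets.

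The main obstacle I expect is precisely the \emph{non-compactness of $\widetilde M$}: Theorem A of \cite{CIPP98} in its cleanest form is stated for closed manifolds, whereas here we work on the universal cover. One must therefore either appeal to the equivariant/covering version of the Ma\~n\'e critical value theory (which is standard in this literature — this is exactly why $c(L,\sigma)$ is defined via a sup over $\widetilde M$), or else descend the computation to a fundamental domain using the $\pi_1(M)$-invariance of $\widetilde L$ and the class-$\nu$ constraint. The hypothesis $[\widetilde\sigma]_b=0$ (bounded weak primitive) is what makes $\widetilde H(\cdot,\widetilde\theta)$ bounded above on $\widetilde M$ at all, so that $c(L,\sigma)<+\infty$ by Proposition \ref{prp-man}; without it the statement is vacuous on one side. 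Since the author explicitly omits the proof of the companion proposition and flags this as routine, I would present the argument compactly, citing \cite{CIPP98} and \cite{CIPP98}-style arguments for the lower-semicontinuity and the diverging-sequence construction rather than reproving them.
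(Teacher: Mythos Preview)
Your proposal is correct and in fact goes well beyond what the paper does: the paper offers no proof at all, merely the sentence ``Theorem A in \cite{CIPP98} tells us when $\widehat{S}_k$ is bounded from below'' immediately preceding the statement, together with the remark afterwards that this equivalence reconciles the geometric definition \eqref{mandef} with Ma\~n\'e's original one. Your sketch is exactly the content behind that citation---rewriting $\widehat S_k$ via a weak primitive on the universal cover, the Fenchel inequality for the ``$k\geq c$'' direction, and the standard Ma\~n\'e argument (iterating a short loop with negative action) for the ``$k<c$'' direction---so there is no divergence in approach, only in level of detail.

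One minor remark: your aside that ``$\widehat{P}^\sigma$ on $\Lambda_0$ may be unbounded on fibers of $\op{pr}_{W^{1,2}}$'' is confused, since $\widehat P^\sigma$ is defined on $W^{1,2}_0$ and is therefore constant on those fibers; what you presumably mean is that $\widehat P^\sigma$ itself may be unbounded on $W^{1,2}_0$ when the weak primitive is unbounded. This does not affect the argument, since the comparison with the Ma\~n\'e functional is pointwise and does not require any boundedness of $\widehat P^\sigma$.
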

\begin{rmk}
Originally the critical value was introduced by Ma\~n\'e as the infimum of the values of $k$ such that $\widehat{S}_k:\Lambda_0\rightarrow\R$ is bounded from below \cite{Man97,CDI97}. Thus, the proposition above establishes the equivalence between the more geometric definition in \eqref{mandef} and the original one.
\end{rmk}
\begin{exe}
Prove that $\widehat S_k|_{\Lambda_\nu}$ is bounded from below if and only if $\widehat S_k|_{\Lambda_0}$ is bounded from below if and only if $\widehat S_k|_{\Lambda_0}$ is non-negative.
\end{exe}
As a by-product of Proposition \ref{prp-below}, we can give a criterion guaranteeing that a vanishing sequence for $\eta_k$ has bounded periods, provided $k>c(L,\sigma)$.
\begin{cor}\label{per-bou}
Let $\nu\in[\T,M]$ and $[\widetilde\sigma]_b=0$. If $k>c(L,\sigma)$ and $b\in\R$, then there exists a constant $C(\nu,k,b)$ such that
\begin{equation*}
\forall\, \gamma\in\Lambda_\nu\,,\quad \widehat{S}_k(\gamma)\ <\ b\ \ \Longrightarrow\ \ T\ <\ C(\nu,k,b)\,.
\end{equation*}
\end{cor}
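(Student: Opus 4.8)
The plan is to reduce the statement to Proposition~\ref{prp-below} by exploiting the fact that $\widehat S_k$ depends \emph{affinely} on the energy $k$, with the period playing the role of the slope. Concretely, for $\gamma=(x,T)\in\Lambda$ and any $k,k'\in\R$ one has $A_k(\gamma)=A_{k'}(\gamma)+(k-k')T$, while the term $\widehat P^\sigma\circ\op{pr}_{W^{1,2}}$ does not involve $k$ at all. Hence
\begin{equation*}
\widehat S_k(\gamma)\ =\ \widehat S_{k'}(\gamma)\ +\ (k-k')\,T\,,\qquad \forall\,\gamma=(x,T)\in\Lambda_\nu\,,
\end{equation*}
and both sides are well defined on $\Lambda_\nu$ precisely because $[\widetilde\sigma]_b=0$.

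Next I would invoke the strict inequality $k>c(L,\sigma)$. Since $[\widetilde\sigma]_b=0$, Proposition~\ref{prp-man} ensures $c(L,\sigma)<+\infty$, so we may fix a real number $k'$ with $c(L,\sigma)\le k'<k$. By Proposition~\ref{prp-below} (in the form valid under the assumption $[\widetilde\sigma]_b=0$), the function $\widehat S_{k'}\colon\Lambda_\nu\to\R$ is bounded from below; let $C'=C'(\nu,k')$ be such that $\widehat S_{k'}\ge -C'$ on $\Lambda_\nu$.

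It then remains only to combine these two facts. If $\gamma=(x,T)\in\Lambda_\nu$ satisfies $\widehat S_k(\gamma)<b$, the displayed identity gives
\begin{equation*}
(k-k')\,T\ =\ \widehat S_k(\gamma)-\widehat S_{k'}(\gamma)\ <\ b+C'(\nu,k')\,,
\end{equation*}
whence $T<\bigl(b+C'(\nu,k')\bigr)/(k-k')=:C(\nu,k,b)$, which is the claim.

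The proof is short once Proposition~\ref{prp-below} is in hand, so there is no genuine obstacle; the only points that deserve attention are bookkeeping ones. First, the lower bound for $\widehat S_{k'}$ on $\Lambda_\nu$ really does depend on the free homotopy class $\nu$ — it enters through the reference loop $x_\nu$ used to define $\widehat P^\sigma$ on $W^{1,2}_\nu$ — which is exactly why the constant in the statement is permitted to depend on $\nu$. Second, one must be able to pick $k'$ strictly below $k$ yet not below $c(L,\sigma)$, and this is possible precisely because the hypothesis is the strict inequality $k>c(L,\sigma)$ together with the finiteness of $c(L,\sigma)$ guaranteed by $[\widetilde\sigma]_b=0$.
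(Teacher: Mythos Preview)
Your proof is correct and follows essentially the same approach as the paper: both exploit the affine dependence $\widehat S_k=\widehat S_{k'}+(k-k')T$ together with the lower bound on $\widehat S_{k'}$ from Proposition~\ref{prp-below}. The only cosmetic difference is that the paper takes $k'=c(L,\sigma)$ directly (which is permitted since Proposition~\ref{prp-below} gives boundedness for $k\geq c(L,\sigma)$), whereas you allow any $k'\in[c(L,\sigma),k)$.
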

\begin{proof}
We readily compute
\begin{equation*}
T\ =\ \frac{\widehat{S}_k(\gamma)-\widehat{S}_{c(L,\sigma)}(\gamma)}{k-c(L,\sigma)}\ \leq\ \frac{b-\inf_{\Lambda_\nu}\widehat{S}_{c(L,\sigma)}}{k-c(L,\sigma)}\ =:\ C(\nu,k,b)\,.\qedhere
\end{equation*}
\end{proof}

\subsection{Non-contractible orbits}
We now prove the existence of non-contractible orbits as prescribed by the Main Theorem.

\begin{proof}[Proof of Theorem \ref{thm:main}.$(1a)$]
Let $\nu\in[\T,M]$ be a non-trivial class, $\sigma$ be a magnetic form such that $[\sigma]_b=0$ and $k>c(L,\sigma)$. Thanks to Proposition \ref{prp-below}, the infimum of $\widehat{S}_k$ on $\Lambda_\nu$ is finite. Then, we apply Theorem \ref{thm:fun} with $\mathscr H=\Lambda_\nu$ and $\widehat{\mathscr S}=\widehat{S}_k$ and we obtain a vanishing sequence $(\gamma_m)_{m\in\N}$ such that $\widehat{S}_k(\gamma_m)$ is uniformly bounded. By Corollary \ref{per-bou} the sequence of periods is bounded from above. By Corollary \ref{cor-perbel} the sequence of periods is also bounded away from zero. Therefore, we can apply Proposition \ref{prp-conv} to get a limit point of the sequence.
\end{proof}
\bigskip

\subsection{Contractible orbits}

We start by recalling a topological lemma. 
\begin{prp}\label{prp-top}
If $d\geq1$ and $\delta\leq\delta_*$ (see Lemma \ref{def-ret}), there are natural bijections
\begin{equation*}
\xymatrixcolsep{35pt}\xymatrix{
\displaystyle\frac{\pi_{d+1}(M)}{\pi_1(M)}\ \ar[rd]^{F}\ar[r]&\ [\,S^{d+1},\,M\,]\ar[d]&\\
&\ \big[\,\big(B^d,\partial B^d\big)\,,\,\big(W^{1,2}_0, M_0\big)\,\big]\ \ar[r]^<<<<<{\quad i^{M_0}_{\{\ell<\delta\}}}&\ \big[\,\big(B^d,\partial B^d\big)\,,\,\big(W^{1,2}_0,\{\ell<\delta\}\big)\,\big]\,,}
\end{equation*}
where $\pi_{d+1}(M)/\pi_1(M)$ is the quotient of $\pi_{d+1}(M)$ by the action of $\pi_1(M)$\footnote{Here a choice of an arbitrary base point $q_0\in M$ is to be understood: $\pi_{d+1}(M):=\pi_{d+1}(M,q_0)$ and $\pi_1(M):=\pi_1(M,q_0)$}. The trivial classes on the second line are identified with the class of constant maps in $[S^{d+1},M]$ and with the class of the zero element in $\pi_{d+1}(M)/\pi_1(M)$.
\end{prp}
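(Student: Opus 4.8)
The plan is to construct both bijections of the diagram by elementary homotopy-theoretic arguments, exploiting the fact that the evaluation-at-a-point map $\mathrm{ev}\colon W^{1,2}_0 \to M$, $x \mapsto x(0)$, is a fibration whose fiber over $q_0$ is the based loop space $\Omega_{q_0}M$, together with the contractibility of $\Omega_{q_0}M$-fibers onto points provided by the retraction of Lemma \ref{def-ret}. First I would treat the right-hand map on the second line, namely $i^{M_0}_{\{\ell<\delta\}}$. Since $\delta \le \delta_*$, Lemma \ref{def-ret} gives a deformation retraction of $\{\ell<\delta\}$ onto $M_0$; this retraction immediately yields that $i^{M_0}_{\{\ell<\delta\}}$ is a bijection on the relevant sets of homotopy classes of pairs, because one may push any map $(B^d,\partial B^d)\to (W^{1,2}_0,\{\ell<\delta\})$ along the retraction to land in $(W^{1,2}_0, M_0)$, and likewise for homotopies, while injectivity follows by the same token applied to a homotopy realizing two maps as equivalent in the larger pair. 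So this step is essentially formal.

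The substantive step is the identification of $\big[(B^d,\partial B^d),(W^{1,2}_0,M_0)\big]$ with $\pi_{d+1}(M)/\pi_1(M)$ and with $[S^{d+1},M]$. For the latter, I would use the standard fact that $[S^{d+1},M]$ is in natural bijection with $\pi_{d+1}(M)/\pi_1(M)$ — free homotopy classes of maps from a sphere are the orbits of $\pi_1$ acting on the based homotopy group; this is classical and I would just cite it. For the map $F$, the idea is: given a map $\gamma\colon (B^d,\partial B^d)\to (W^{1,2}_0,M_0)$, form the adjoint map $\hat\gamma\colon B^d\times \T \to M$, $\hat\gamma(\xi,t)=\gamma(\xi)(t)$; because $\gamma(\partial B^d)\subset M_0$ consists of constant loops, $\hat\gamma$ collapses $\partial B^d\times \T$ in the $\T$-direction, and since every loop in the image is contractible (we are in $W^{1,2}_0$), after a further collapse of $\{\xi_0\}\times\T$ for a basepoint $\xi_0\in\partial B^d$ we obtain a well-defined class in $[B^d\times\T / (\partial B^d\times\T \cup B^d\times\{0\}), M]$. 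The quotient space here is $S^{d+1}$ up to homotopy (this is the relative form of $\Sigma\Omega$-type adjunction: $B^d\times\T$ with $\partial B^d\times\T$ and $B^d\times\{*\}$ collapsed is homeomorphic, after smoothing corners, to $S^{d}\wedge S^1\simeq S^{d+1}$; more precisely one uses $B^d/\partial B^d = S^d$ and the reduced suspension). Running the construction backwards — given a map $S^{d+1}\to M$, view $S^{d+1}$ as the above quotient, restrict, and take adjoints — produces the inverse, showing $F$ is a bijection. Commutativity of the triangle then follows by unwinding: the composite $\pi_{d+1}(M)/\pi_1(M) \to [S^{d+1},M] \to \big[(B^d,\partial B^d),(W^{1,2}_0,M_0)\big]$ is exactly $F$ by construction. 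The statement about trivial classes is checked directly: a map $\gamma$ with $\gamma(B^d)\subset M_0$ has adjoint factoring through $B^d \to M$ (a constant-in-$t$ map), hence $\hat\gamma$ extends over $B^{d}\times\mathrm{cone}$, so the class in $[S^{d+1},M]$ is null, i.e.\ the image is $0\in\pi_{d+1}(M)/\pi_1(M)$; conversely a nullhomotopic sphere gives a map into $M_0$ after the retraction.

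The main obstacle I anticipate is being careful with basepoints and with the $W^{1,2}$-topology: one must verify that the adjunction between maps $B^d\to W^{1,2}_0$ and maps $B^d\times\T\to M$ is a homeomorphism of mapping spaces (or at least a weak homotopy equivalence) at the regularity level of $W^{1,2}$-loops — this is where the Sobolev setting, rather than the continuous-loop setting, requires a remark, though it is by now standard that $W^{1,2}(\T,M)$ has the homotopy type of the free loop space (cf.\ \cite{Kli78}) and the adjunction respects homotopies. The second delicate point is the action of $\pi_1(M)$ and the choice of the basepoint $\xi_* \in \partial B^d$ noted in the earlier remark: shifting $\xi_*$ conjugates by an element of $\pi_1$, which is precisely why the target is $\pi_{d+1}(M)/\pi_1(M)$ and not $\pi_{d+1}(M)$; I would spell this out in one sentence. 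Everything else is routine manipulation of collapse maps, and I would relegate the smoothing-of-corners identifications to a parenthetical remark citing a standard reference.
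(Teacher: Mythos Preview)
Your proposal is correct and follows the same route as the paper: the retraction of Lemma~\ref{def-ret} for $i^{M_0}_{\{\ell<\delta\}}$, the classical identification $\pi_{d+1}(M)/\pi_1(M)\cong[S^{d+1},M]$, and the adjunction $\hat\gamma(\xi,t)=\gamma(\xi)(t)$ passing through a quotient of $B^d\times\T$ homeomorphic to $S^{d+1}$ (the paper, like you, defers the verification that this is a bijection to \cite{Kli78}). The only cosmetic difference is that the paper writes down an explicit homeomorphism $Q\colon(B^d\times\T)/\!\sim\;\to S^{d+1}$, collapsing each circle $\{z\}\times\T$ with $z\in\partial B^d$ separately, rather than your smash-product description with the additional collapse of $B^d\times\{0\}$; this sidesteps the basepoint bookkeeping you anticipate, but the two constructions are manifestly inverse to one another.
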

\begin{proof}
The first horizontal map is $\frac{[\hat u]}{\pi_1(M)}\mapsto [\hat u]$. We leave as an exercise to the reader to show that is a bijection. The vertical map sends $[\hat u]$ to $[u]$, where $u$ is defined as follows. Consider the equivalence relation $\sim$ on $B^d\times \T$:
\begin{equation}
(z_1,s_1)\,\sim\,(z_2,s_2)\quad\quad\Longleftrightarrow\quad\quad (z_1,s_1)\,=\,(z_2,s_2)\quad \vee\quad z_1\,=\,z_2\, \in\, \partial B^d\,.
\end{equation}
If we interpret $B^d$ as the unit ball in $\R^d$ and $S^{d+1}$ as the unit sphere in $\R^{d+2}$ we can define the homeomorphism
\begin{align*}
Q:\frac{B^d\times \T}{\sim}&\ \longrightarrow\ S^{d+1}\\
[z,s]&\ \longmapsto\ (z,\sqrt{1-|z|^2}\cdot e^{2\pi is})\,,
\end{align*}
where $e^{2\pi is}$ belongs to $S^1\subset\R^2$. We set $u(z)(s):=(\hat u\circ Q)([z,s])$. For a proof that the vertical map is well-defined and it is a bijection, we refer the reader to \cite[Proposition 2.1.7]{Kli78}. Finally, the second horizontal map is a bijection thanks to Lemma \ref{def-ret}.
\end{proof}
\bigskip

We can now prove the parts of the Main Theorem dealing with contractible orbits.
\begin{proof}[Proof of Theorem \ref{thm:main}.$(1b)$]
Let $[\widetilde\sigma]_b=0$, $k>c(L,\sigma)$ and fix some non-zero $\mathfrak u\in\pi_{d+1}(M)$, which exists by hypothesis. We apply Proposition \ref{prp-mp} to the trivial interval ${\{k\}}$ and get the positive real numbers $\delta_{\{k\}}$, $b_{\{k\}}$ and $T_{\{k\}}$. Let
\begin{equation}
\Gamma_{\mathfrak u}\,:=\ \Big\{\ \gamma=(x,T):\big(B^d,\partial B^d\big)\ \longrightarrow\ \big(\Lambda_0,M_0\times\{T_{\{k\}}\}\big)\ \ \Big|\ \ [x]\in F\big(\mathfrak u/\pi_1(M)\big)\ \Big\}
\end{equation}
By Proposition \ref{prp-top} we see that $\Gamma_{\mathfrak u}\in\big[(B^d,\partial B^d),(\Lambda_0,M_0\times\{T_{\{k\}}\})\big]$ and that $i^{M_0\times\{T_{\{k\}}\}}_{\mathscr V^{\delta_{\{k\}}}}(\Gamma_{\mathfrak u})$ is non-trivial. Therefore, we apply Theorem \ref{thm-for} with
\begin{equation*}
\left[\ \begin{aligned}
\mathscr H&=\Lambda_0&\quad \mathscr I&= \{k\}&\quad\widehat{\mathscr S}_k&=\widehat S_k\\
\beta_0&=b_{\{k\}}/2 &\quad \mathscr V&=\mathcal V^{\delta_{\{k\}}}&\quad\mathscr M&=M_0\times\{T_{\{k\}}\}
\\
\mathscr G&=\Gamma_{\mathfrak u} & & & &
\end{aligned}\ \right]
\end{equation*}
and we obtain a vanishing sequence $(\gamma_m)_{m\in \N}$ such that
\begin{equation*}
\lim_{m\rightarrow+\infty}\widehat{S}_k(\gamma_m)\ =\ c_{\mathfrak u}(k)\,:=\ \inf_{\gamma\in\Gamma}\ \sup_{B^d}\widehat{S}_k\circ \gamma\ \geq\ b_{\{k\}} \,.
\end{equation*}
The sequence of periods $(T_m)$ is bounded from above by Corollary \ref{per-bou}. The sequence $(T_m)$ is also bounded away from zero by Corollary \ref{cor-perbel}, since $\gamma_m\notin \{S_k< b_{\{k\}}/2\}$ for $m$ big enough. Applying Proposition \ref{prp-conv} we obtain a limit point of $(\gamma_m)$.
\end{proof}
\bigskip

\begin{proof}[Proof of Theorem \ref{thm:main}.$(2)$]
Let $[\widetilde\sigma]=0$ and fix $I=[k_0,k_1]\subset (e_0(L),c(L,\sigma))$. Let $\delta_I$, $b_I$ and $T_I$ be as in Proposition \ref{prp-mp}. Fix $\gamma_0\in M_0\times\{T_I\}$ and $\gamma_1\in\Lambda_0$ such that $\widehat{S}_{k_1}(\gamma_1)<0$. Such element exists thanks to Proposition \ref{prp-below}. Let $u_*:[0,1]\rightarrow \Lambda$ be some path such that $u_*(0)=\gamma_0$ and $u_*(1)=\gamma(1)$ and denote by $[u_*]\in[(B^1,\partial B^1),(\Lambda_0,\{\gamma_0,\gamma_1\})]$ its homotopy class. By Proposition \ref{prp-mp}, $\gamma_0$ and $\gamma_1$ belong to different components of $\{\widehat{S}_{k_0}<b_I\}$. Thus, $i^{\{\gamma_0,\gamma_1\}}_{\{\widehat{S}_{k_0}<b_I\}}([u_*])$ is non-trivial. Therefore, we apply Theorem \ref{thm-for} with
\begin{equation*}
\left[\ \begin{aligned}
\mathscr H&=\Lambda_0&\quad \mathscr I&= I&\quad\widehat{\mathscr S}_k&=\widehat S_k\\
\beta_0&=b_I/2 &\quad \mathscr V&=\{\widehat{S}_{k_0}<b_I\}&\quad\mathscr M&=\{\gamma_0,\gamma_1\}
\\
\mathscr G&=[u_*] & & & &
\end{aligned}\ \right]
\end{equation*}
and we get a vanishing sequence $(\gamma^k_m)_{m\in\N}$ with bounded periods, for almost every $k\in I$. Moreover, we have
\begin{equation*}
\lim_{m\rightarrow+\infty}\widehat{S}_k(\gamma_m)\ =\ c_{[u_*]}(k)\,:=\ \inf_{u\in[u_*]}\ \sup_{B^1}\widehat{S}_k\circ u\ \geq\ b_I\,. 
\end{equation*}
In particular, $\gamma^k_m\notin\{\widehat{S}_k <b_{I}/2\}$ for $m$ large enough. Hence, the periods are bounded away from zero by Corollary \ref{cor-perbel}. Now we apply Proposition \ref{prp-conv} to get a limit point of $(\gamma^k_m)$. Taking an exhaustion of $(e_0(L),c(L,\sigma))$ by compact intervals, we get a critical point for almost every energy in $(e_0(L),c(L,\sigma))$.
\end{proof}
\bigskip

\begin{proof}[Proof of Theorem \ref{thm:main}.$(3)$]
Let $[\widetilde\sigma]\neq0$ and fix $I=[k_0,k_1]\subset (e_0(L),+\infty)$. Let $\delta_I$, $b_I$ and $T_I$ be as in Proposition \ref{prp-mp}. Since $[\widetilde\sigma]\neq0$, there exists a non-zero $\mathfrak u\in\pi_2(M)$. We set
\begin{equation}
\Gamma_{\mathfrak u}\,:=\ \Big\{\ \gamma=(x,T):\big(B^1,\partial B^1\big)\ \longrightarrow\ \big(\Lambda_0,M_0\times\{T_I\}\big)\ \ \Big|\ \ [x]\in F\big(\mathfrak u/\pi_1(M)\big)\ \Big\}
\end{equation}
By Proposition \ref{prp-top} we see that $\Gamma_{\mathfrak u}\in\big[(B^1,\partial B^1),(\Lambda_0,M_0\times\{T_I\})\big]$ and that $i^{M_0\times\{T_I\}}_{\mathcal V^{\delta_I}}(\Gamma_{\mathfrak u})$ is non-trivial. Therefore, we apply Theorem \ref{thm-for} with
\begin{equation*}
\left[\ \begin{aligned}
\mathscr H&=\Lambda_0&\quad \mathscr I&= I&\quad\alpha_k&=\eta_k\\
\beta_0&=b_I/2 &\quad \mathscr V&=\mathcal V^{\delta_I}&\quad\mathscr M&=M_0\times\{T_I\}
\\
\mathscr G&=\Gamma_{\mathfrak u} & & & &
\end{aligned}\ \right]
\end{equation*}
and we obtain a vanishing sequence $(\gamma^k_m)_{m\in\N}\subset \Lambda_0\setminus \{S_k<b_I/2\}$ with bounded periods, for almost every $k\in I$. Since, the periods are bounded away from zero by Corollary \ref{cor-perbel}, Proposition \ref{prp-conv} yields a limit point of $(\gamma^k_m)$, for almost every $k\in I$. 

Taking an exhaustion of $(e_0(L),+\infty)$ by compact intervals, we get a contractible zero of $\eta_k$ for almost every $k>e_0(L)$.
\end{proof}

\section{Magnetic flows on surfaces I: Ta\u\i manov minimizers}\label{sec:tai}
In this and in the next section we are going to focus on the $2$-dimensional case. Therefore, let us assume that $M$ is a closed connected oriented surface. In this case $H^2(M;\R)\simeq \R$, where the isomorphism is given by integration and we identify $[\sigma]$ with a real number. Up to changing the orientation on $M$, we assume that $[\sigma]\geq0$. 

For simplicity, we are going to work in the setting of Section \ref{sub:hom} and consider only purely kinetic Lagrangians. Namely, we take $L(q,v)=\frac{1}{2}|v|^2$, where $|\cdot|$ is induced by a metric $g$.

Since $L$ depends only on $g$, we will use the notation $(g,\sigma)$ where we previously used $(L,\sigma)$. We readily see that $e_m(L)=e_0(L)=0$ and that $c(g,\sigma)=0$ if and only if $\sigma=0$ (see Proposition \ref{prp-man}).
We recall that the periodic orbits with positive energy are parametrized by a positive multiple of the arc-length. Thus, they are immersed curve in $M$. 

\subsection{The space of embedded curves}
The space of curves on a $2$-dimensional manifold $M$ has a particularly rich geometric structure. Observe, indeed, that for $n\geq3$ the curves on $M$ are generically embedded. On the other hand, if $M$ is a surface, intersections between curves and self-intersections are generically stable. Therefore, one can refine the existence problem by looking at periodic orbits having a particular shape (see the beginning of Section 1.1 in \cite{HS13} and references therein for a precise notion of the shape of a curve on a surface). For example, we consider the following question.
\begin{center}
 For which $k$ and $\nu$ there exists a \textit{simple} periodic orbit $\gamma\in\Lambda_\nu$ with energy $k>0$? 
\end{center}
Let us start by investigating the case $\nu=0$. If $\gamma=(x,T)$ is a contractible simple curve, there exists an embedded disc $\hat u:B^2\rightarrow M$ such that $\hat u(e^{2\pi is})=x(s)$. This map yields a path $(u,T)$ in $\Lambda_0$ from a constant path $(x_0,T)$, representing the centre of the disc, to $(x,T)$. Integrating $\eta_k$ along this path and summing the value of $S_k$ at $(x_0,T)$, we get
\begin{equation}\label{int-emb}
 \int_0^1(u,T)^*\eta_k\ +\ S_k(x_0,T)\ =\ \frac{e(x)}{2T}\ +\ kT\ -\ \int_{B^2}\hat u^*\sigma\,.
\end{equation}
Since $\hat u$ is an embedding, $\op{area}(\hat u)\leq\op{area}(M)$ and we find a uniform bound from below
\begin{equation}\label{eq-lb}
 \int_0^1(u,T)^*\eta_k\ +\ S_k(x_0,T)\ \geq\ 0\ +\ 0\ -\ \sup_M|\sigma|\cdot\op{area}(\hat u)\ \geq\ -\sup_M|\sigma|\cdot\op{area}(M)\,.
\end{equation}
This observation gives us the idea of defining a functional on the space of simple contractible loops and look for its global minima. First, we notice that $\int_{B^2}\hat u^*\sigma$ is invariant under an orientation-preserving change of parametrization. In order to make the whole right-hand side of \eqref{int-emb} independent of the parametrization, we ask that $(\gamma,\dot\gamma)\in\Sigma_k$. This implies that
\begin{equation*}
\sqrt{2k}\cdot T\ =\ \ell(x)\,,\quad\quad e(x)\ =\ \ell(x)^2\,.
\end{equation*}
Substituting in \eqref{int-emb}, we get
\begin{equation}
 \int_0^1(u,T)^*\eta_k\ +\ S_k(x_0,T)\ =\ \sqrt{2k}\cdot\ell(\partial D)\ -\ \int_D\sigma\ =:\,\mathcal T_k(D)\,,
\end{equation}
where
\begin{equation*}
D=[\hat u]\in\mathcal D(M):=\left\{\begin{aligned}
\mbox{embeddings } \hat u:B^2\longrightarrow M\,,\hspace{50pt}\\
\mbox{ up to orientation}\mbox{-preserving reparametrizations }\end{aligned}\right\}
\end{equation*}
and $\partial D$ represents the boundary of $D$ oriented in the counter-clockwise sense. 
We readily see that the critical points of this functional correspond to the periodic orbits we are looking for.
\begin{prp}
If $D$ is a critical point of $\mathcal T_k:\mathcal D(M)\rightarrow\R$, then $\partial D$ is the support of a simple contractible periodic orbit with energy $k$.
\end{prp}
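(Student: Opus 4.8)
The plan is to compute the first variation of $\mathcal{T}_k$ on $\mathcal{D}(M)$ and show that its vanishing is equivalent to the equation $\kappa_{\partial D} = s\cdot f$ with $s = 1/\sqrt{2k}$, which by Section~\ref{sub:hom} (Equation~\eqref{cur-sur}) characterizes the twisted Euler--Lagrange flow lines of energy $k$ up to orientation-preserving reparametrization. Since $\partial D$ is by construction an embedded loop, a critical point of $\mathcal{T}_k$ will then automatically yield a \emph{simple} contractible periodic orbit on $\Sigma_k$, as claimed.

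First I would parametrize a variation of the disc $D$ by a smooth family $\hat u_r : B^2 \to M$, $r\in(-\varepsilon,\varepsilon)$, with $\hat u_0 = \hat u$, and only the boundary behavior matters since both terms of $\mathcal{T}_k(D) = \sqrt{2k}\,\ell(\partial D) - \int_D\sigma$ depend on the interior only through $\int_D \sigma$, which is controlled by a Stokes-type argument. Write $x_r := \hat u_r|_{\partial B^2}$ for the boundary curve and let $\xi := \partial_r|_{r=0}\,x_r$ be the boundary variation field along $x_0$; decompose $\xi = \xi^{\top} + \phi\,\mathrm{n}$ into tangential and normal parts, where $\mathrm{n} = \imath(\dot x_0 / |\dot x_0|)$ is the (rotated) unit normal and $\phi\in C^\infty(\T)$. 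The length term varies as $\tfrac{d}{dr}\big|_{0}\ell(x_r) = -\int \kappa_{x_0}\,\phi\,\mathrm{d}\ell$ (the standard first variation of arclength, tangential reparametrizations contributing nothing since $\partial B^2$ is a closed curve). For the flux term, the family of cylinders swept out by $r\mapsto x_r$ gives, exactly as in the transgression computation of Section~\ref{sec:act}, $\tfrac{d}{dr}\big|_{0}\int_{D_r}\sigma = \int \sigma_{x_0}(\xi, \dot x_0)\,\mathrm{d}t = \int f(x_0)\,\mu(\xi,\dot x_0)\,\mathrm{d}t = \int f(x_0)\,\phi\,|\dot x_0|\,\mathrm{d}t = \int f(x_0)\,\phi\,\mathrm{d}\ell$, again only the normal part surviving. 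Hence
\[
\frac{d}{dr}\Big|_{r=0}\mathcal{T}_k(D_r)\ =\ -\int_{\partial D}\big(\sqrt{2k}\,\kappa_{\partial D} + f(\partial D)\big)\,\phi\,\mathrm{d}\ell,
\]
and since $\phi$ ranges over all smooth functions on $\partial D$ (every normal variation of an embedded disc is realizable by extending to a variation of the disc), the vanishing of the first variation is equivalent to $\sqrt{2k}\,\kappa_{\partial D} = -f(\partial D)$ pointwise, i.e. $\kappa_{\partial D} = s\cdot f$ with $s = 1/\sqrt{2k}$ once the orientation of $\partial D$ is fixed. Comparing with \eqref{cur-sur}, and recalling that curves satisfying that curvature equation, suitably reparametrized to unit speed $\sqrt{2k}$, are precisely the energy-$k$ solutions of \eqref{El-sur2}, we conclude that $\partial D$ supports a simple periodic orbit of $\Phi^{(L,\sigma)}$ on $\Sigma_k$, contractible because it bounds the disc $D$.

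The main obstacle I expect is bookkeeping the orientation and sign conventions so that the curvature equation comes out with the correct sign: $\partial D$ carries the counter-clockwise (boundary) orientation, $\imath$ is rotation by $+\pi/2$, and $\mu$, $f$, $\sigma = f\mu$ must all be consistently oriented, so that the pairing $\mu(\xi,\dot x_0) = \phi\,|\dot x_0|$ holds with the right sign and the stationarity condition genuinely reproduces \eqref{cur-sur} rather than its sign-reversed cousin. A secondary technical point is justifying that arbitrary normal boundary variations extend to variations through embedded discs --- this is routine for $\phi$ small, and for the first-variation computation infinitesimal extensions suffice --- and that $\int_{D_r}\sigma$ varies smoothly, which follows from the cylinder/transgression formula already used in Section~\ref{sec:act}. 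Everything else is the classical first-variation-of-length calculation.
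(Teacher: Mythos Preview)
The paper does not prove this proposition: it prefaces it with ``We readily see that the critical points of this functional correspond to the periodic orbits we are looking for'' and then moves on. Your first-variation computation is exactly the routine calculation the paper has in mind and is the correct approach.

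One concrete correction, which is precisely the sign bookkeeping you anticipated: with $\mathrm{n}=\imath(\dot x_0/|\dot x_0|)=\imath T$ and the paper's convention $\nabla_TT=\kappa_\gamma\,\imath T$ (so that \eqref{El-sur2} becomes \eqref{cur-sur}), the pair $(T,\imath T)$ is positively oriented, hence $\mu(\imath T,T)=-1$ and therefore $\mu(\xi,\dot x_0)=-\phi\,|\dot x_0|$, not $+\phi\,|\dot x_0|$. Carrying this through gives
\[
\frac{d}{dr}\Big|_{0}\mathcal T_k(D_r)\ =\ \int_{\partial D}\big(-\sqrt{2k}\,\kappa_{\partial D}+f\big)\,\phi\,\di\ell,
\]
so stationarity reads $\sqrt{2k}\,\kappa_{\partial D}=f$, i.e.\ $\kappa_{\partial D}=s\,f$ with $s=1/\sqrt{2k}$, which is \eqref{cur-sur} on the nose rather than its sign-reversed cousin. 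With that fix your argument is complete.
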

In view of this proposition and the fact that $\mathcal T_k$ is bounded from below, we consider a minimizing sequence $(D_m)_{m\in\N}\subset\mathcal D(M)$. However, the sequence $D_m$ might converge to a disc $D_\infty$ which is not embedded. For example, $D_\infty$ might have a self-tangency at some point $q$ on its boundary (see Figure \ref{pic}).
\begin{figure}
\includegraphics[width=3.5in]{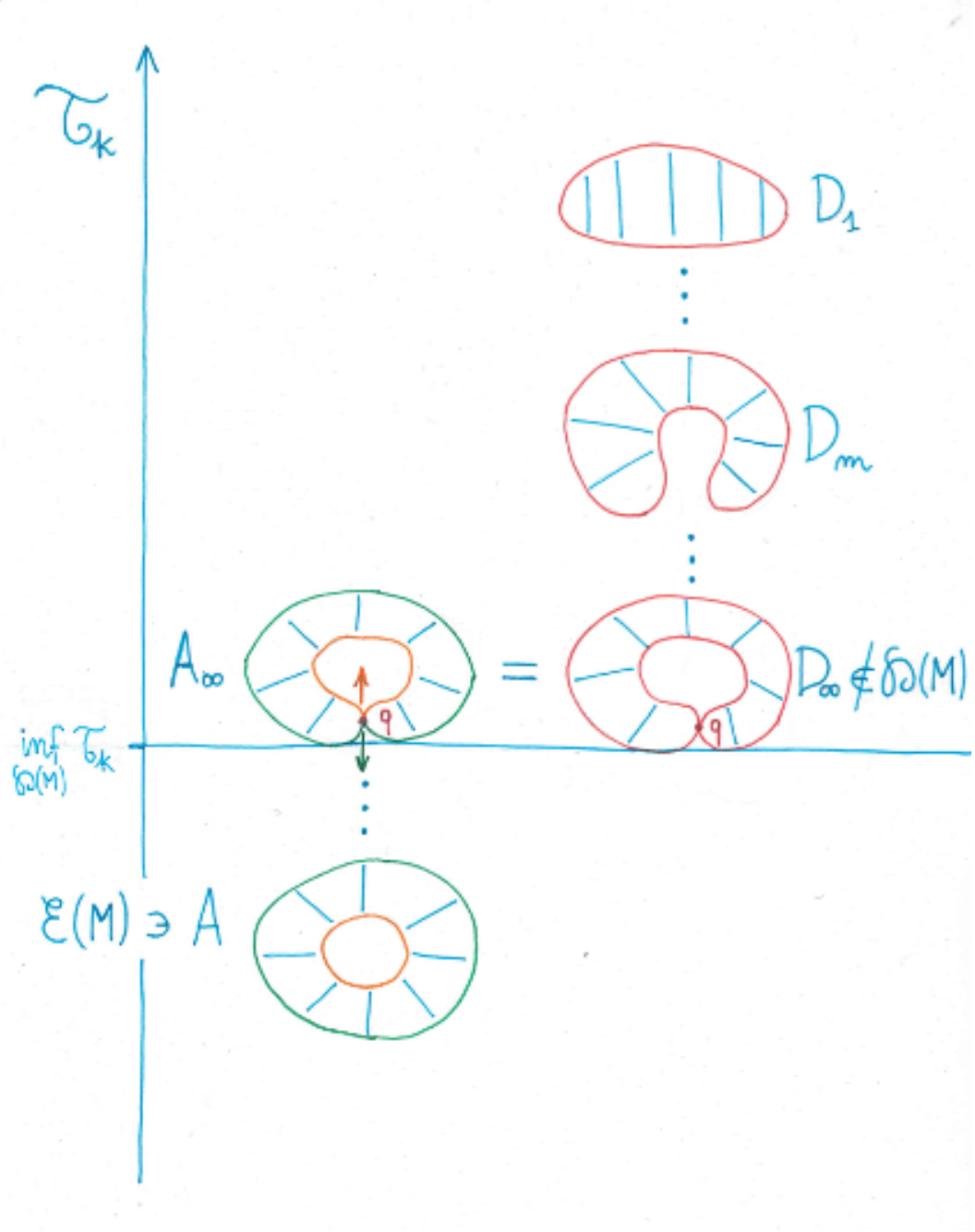}
\caption{Minimizing sequence for $\mathcal T_k$ on $\mathcal D(M)$}\label{pic}
\end{figure} However, in this case the support of $D_\infty$ in $M$ can be interpreted as an annulus $A_\infty$ whose two boundary components touch exactly at $q$. Now we can resolve the singularity in the space of annuli and get an embedded annulus $A$ close to $A_\infty$. The key observation is that $\mathcal T_k$ can be extended to the space of annuli and that
\begin{equation}\label{ine-tai}
\mathcal T_k(D_\infty)\ =\ \mathcal T_k(A_\infty)\ >\ \mathcal T_k(A)\,.
\end{equation}
To justify the inequality in the passage above, we observe that $\ell(\partial A)<\ell(\partial A_\infty)$ from classic estimates in Riemannian geometry and that the contribution given by the integral of $\sigma$ is of higher order. This heuristic argument prompts us to give the following definitions.
\begin{dfn}
Let $\mathcal E(M)=\{\mbox{oriented embedded surfaces }\Pi\rightarrow M\}\cup\{\emptyset\}$ and denote by $\mathcal E_+(M)$ and $\mathcal E_-(M)$ the surfaces having the same orientation as $M$ and the opposite orientation, respectively. If $\Pi\in\mathcal E(M)$, then $\partial \Pi$ denotes the (possibly empty) multi-curve made by the boundary components of $\Pi$. If we define the length $\ell(\partial \Pi)$ as the sum of the lengths of the boundary components, we have a natural extension
\begin{align*}
\mathcal T_k:\mathcal E(M)&\ \longrightarrow\ \R\\
\Pi&\ \longmapsto\ \sqrt{2k}\cdot\ell(\partial \Pi)\ -\ \int_\Pi\sigma\,.
\end{align*}
\end{dfn}
\noindent As in \eqref{eq-lb} we find that $\mathcal T_k$ is bounded from below by $-\sup|\sigma|\cdot\op{area}(M)$. Moreover, we observe that there is a bijection
\begin{equation}\label{tai-inv}
\begin{aligned}
\mathcal E_+(M)&\ \longrightarrow\ \mathcal E_-(M)\\
\Pi&\ \longmapsto\ M\setminus\mathring{\Pi}
\end{aligned}
\quad\quad\quad\mbox{such that}\quad\quad
\begin{aligned}
\mathcal T_k(M\setminus\mathring{\Pi})\ =\ \mathcal T_k(\Pi)\ +\ \int_M\sigma\,.
\end{aligned}
\end{equation}
Therefore, it is enough to look for a minimizer on $\mathcal E_-(M)$. The chain of inequalities \eqref{ine-tai} hints at the following result. 
\begin{prp}\label{prp-tai}
For all $k>0$, there exists a minimizer $\Pi^k$ of $\mathcal T_k|_{\mathcal E_-(M)}$. If $\partial \Pi^k=\{\gamma^k_i\}_i$, then the $\gamma^k_i$ are periodic orbits with energy $k$.
\end{prp}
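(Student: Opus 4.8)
The plan is to run the direct method for the functional $\mathcal T_k$ on a completion of $\mathcal E_-(M)$, to upgrade the resulting minimizer to a genuine element of $\mathcal E_-(M)$ through the singularity--resolution procedure sketched around Figure \ref{pic} and \eqref{ine-tai}, and then to read off that the boundary components are periodic orbits from the Euler--Lagrange equation of $\mathcal T_k$. First I would record the bounds. As in \eqref{eq-lb}, $\mathcal T_k\geq-\sup_M|\sigma|\cdot\op{area}(M)$ on all of $\mathcal E(M)$, so $c:=\inf_{\mathcal E_-(M)}\mathcal T_k$ is finite, and $c\leq 0$ because $\emptyset\in\mathcal E_-(M)$. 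If $c=0$ then $\emptyset$ is a minimizer with empty boundary and the statement holds vacuously, so from now on assume $c<0$ and fix a minimizing sequence $(\Pi_m)\subset\mathcal E_-(M)$. For large $m$, $\mathcal T_k(\Pi_m)<0$ gives $\sqrt{2k}\,\ell(\partial\Pi_m)<\int_{\Pi_m}\sigma\leq\sup_M|\sigma|\cdot\op{area}(M)$, so the boundary lengths are uniformly bounded.

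Next I would establish compactness. A boundary component of length below a fixed small $\varepsilon_0$ bounds a small region whose addition to, or removal from, $\Pi_m$ changes $\mathcal T_k$ by $-\sqrt{2k}\,\ell+O(\ell^2)<0$; deleting all such components keeps the sequence minimizing and leaves each $\Pi_m$ with a uniformly bounded number of boundary components, each of uniformly bounded length. Parametrizing these by arc length and using Arzel\`a--Ascoli, the boundary multi--curves converge in $C^0$ along a subsequence; simultaneously the characteristic functions $\mathbf 1_{\Pi_m}$, of uniformly bounded perimeter, converge in $L^1$ to $\mathbf 1_{\Pi_\infty}$ for a set of finite perimeter $\Pi_\infty$ whose essential boundary lies in the $C^0$--limit of the $\partial\Pi_m$. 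Since length is lower semicontinuous under $C^0$--convergence and $\int_{\Pi_m}\sigma\to\int_{\Pi_\infty}\sigma$ (by $L^1$--convergence and boundedness of $\sigma$), we get $\mathcal T_k(\Pi_\infty)\leq c<0$; in particular $\Pi_\infty$ is a nontrivial set with $\partial\Pi_\infty\neq\emptyset$, since a degeneration of $\Pi_m$ to $\emptyset$ or to $M$ would force the limit value to be non-negative.

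The key step, and the main obstacle, is to show that $\Pi_\infty$ is genuinely an embedded surface, i.e.\ lies in $\mathcal E_-(M)$. A priori it can fail embeddedness only through finitely many boundary self--tangencies, tangencies between distinct boundary components, or arcs traversed twice. At each such point I would perform the local surgery of Figure \ref{pic}, all at a common small scale $\delta$. The quantitative input is the classical Riemannian estimate that rounding the boundary near a tangency at scale $\delta$ changes the enclosed area, hence $\int_\Pi\sigma$, by only $O(\delta^2)$ while strictly shortening $\ell(\partial\Pi)$ by an amount of order $\delta$, and that a doubled arc can simply be deleted, shortening the boundary at no cost; this is precisely the heuristic behind \eqref{ine-tai}. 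The result is some $\widetilde\Pi\in\mathcal E_-(M)$ with $\mathcal T_k(\widetilde\Pi)<\mathcal T_k(\Pi_\infty)\leq c$, contradicting the definition of $c$. Hence $\Pi_\infty\in\mathcal E_-(M)$ already, and we set $\Pi^k:=\Pi_\infty$. (Alternatively one can bypass the surgery by first minimizing over all sets of finite perimeter and then invoking boundary regularity, which forces that minimizer to lie in $\mathcal E_-(M)$.)

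Finally I would write down the Euler--Lagrange equation. With $\sigma=f\mu$, the functional $\mathcal T_k$ is $\sqrt{2k}$ times the perimeter plus a bulk term with bounded density, so $\Pi^k$ is an almost--minimizer of perimeter in the surface $M$; by the corresponding regularity theory, and since the singular set is empty in ambient dimension $2$, $\partial\Pi^k$ is a finite disjoint union of smooth embedded circles $\gamma^k_1,\dots,\gamma^k_N$. Computing the first variation of $\mathcal T_k$ along a normal perturbation of $\partial\Pi^k$ and setting it to zero yields $\kappa_{\gamma^k_i}=\tfrac{1}{\sqrt{2k}}\,f$ along each $\gamma^k_i$ carrying the boundary orientation induced by $\Pi^k$; the sign here is exactly what is arranged by minimizing over $\mathcal E_-(M)$ rather than $\mathcal E_+(M)$, cf.\ \eqref{tai-inv}. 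By \eqref{cur-sur} and \eqref{El-sur2}, reparametrizing each $\gamma^k_i$ with constant speed $\sqrt{2k}$ turns it into a solution of the twisted Euler--Lagrange equation lying on $\Sigma_k$, i.e.\ a periodic orbit of $\Phi^{(L,\sigma)}$ with energy $k$, as claimed.
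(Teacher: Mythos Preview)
The paper does not actually prove this proposition: immediately after stating it, the text reads ``For a proof of this proposition we refer to \cite{Tai93} and \cite{CMP04}'', and then only describes the two strategies used in those references (Ta\u\i manov's finite-dimensional reduction via piecewise-solution boundaries, and the weak formulation via integral currents). So there is no ``paper's own proof'' to compare against; your sketch is an attempt to fill what the notes deliberately leave as a black box.

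Your outline blends the two cited approaches, and several steps are fine: the a priori bounds, the removal of short boundary components via the isoperimetric inequality, the BV/finite-perimeter compactness, the lower semicontinuity of length, and the first-variation computation leading to $\kappa_{\gamma^k_i}=\tfrac{1}{\sqrt{2k}}f$ are all correct and standard. The parenthetical alternative you mention (minimize over sets of finite perimeter and appeal to regularity) is essentially the CMP04 route and is the cleanest way to make the whole thing rigorous.

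There is, however, a genuine gap in your main argument. You assert that the $C^0$-limit $\Pi_\infty$ ``can fail embeddedness only through finitely many boundary self-tangencies, tangencies between distinct boundary components, or arcs traversed twice.'' This is not justified: a uniform limit of arc-length-parametrized embedded curves is only Lipschitz, and without any curvature control its self-intersection set can be far worse than a finite collection of tangency points (it could, for instance, contain arcs or accumulate). The surgery you describe---and the inequality \eqref{ine-tai} in the text---is a local model for an isolated tangency, not a general desingularization procedure. Making this step rigorous is precisely what requires either Ta\u\i manov's broken-orbit reduction (which keeps curvature under control along the minimizing sequence) or the regularity theory for almost-minimizers of perimeter used in \cite{CMP04}. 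As written, your ``key step'' assumes exactly the structure that the hard part of the proof is meant to establish.
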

For a proof of this proposition we refer to \cite{Tai93} and \cite{CMP04}:
\begin{itemize}
 \item In the former reference, Ta{\u\i}manov uses a finite dimensional reduction and works on the space of surfaces $\Pi\in\mathcal E(M)$ whose boundary is made by piecewise solutions of the twisted Euler-Lagrange equations with energy $k$. Such a method was also recently extended to general Tonelli Lagrangians on surfaces in \cite{AM16}.
 \item In the latter reference, the authors use a weak formulation of the problem on the space of integral currents $I_2(M)\supset \mathcal E(M)$.
\end{itemize}
In order to use Proposition \ref{prp-tai} to prove the existence of periodic orbits with energy $k$, we have to ensure that $\partial \Pi^k\neq\emptyset$. To this purpose, we observe that $\partial\Pi^k=\emptyset$ implies $\Pi^k\in\{\emptyset,\overline M\}$, where $\overline M$ is $M$ with the opposite orientation. We easily compute
$\mathcal T_k(\emptyset)=0$ and $\mathcal T_k(\overline M)=\int_M\sigma\geq0$. Therefore, for every $k>0$ we have
\begin{equation*}
\inf_{\mathcal E_-(M)}\mathcal T_k\ \leq\ 0\quad\quad\mbox{and}\quad\quad \Big(\ \inf_{\mathcal E_-(M)}\mathcal T_k\ <\ 0\quad\Longrightarrow\quad\partial \Pi^k\ \neq\ \emptyset\ \Big).
\end{equation*}
Since the family of functionals $\mathcal T_k$ is monotone in $k$, we are led to define
\begin{equation}
\tau(g,\sigma)\,:=\ \inf\Big\{\,k\ \big|\ \inf_{\mathcal E_-(M)}\mathcal T_k\,=\,0\,\Big\}\,.
\end{equation}
\begin{prp}
The value $\tau(g,\sigma)$ is a non-negative real number. Moreover,
\begin{equation*}
\tau(g,\sigma)\ >\ 0\ \ \quad\Longleftrightarrow\quad\ \ \sigma_{q_0}\ <\ 0\,, \mbox{ for some }q_0\in M\,.
\end{equation*}
If $\sigma$ is exact, then
\begin{equation}
\tau(g,\sigma)\ =\ c_0(g,\sigma)\,:=\ \inf_{d\theta=\sigma}\sup_{q\in M}|\theta_q|\,.
\end{equation}
\end{prp}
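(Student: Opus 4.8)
The plan is to establish the three assertions in turn, exploiting the monotonicity of the family $\mathcal{T}_k$ and the bijection \eqref{tai-inv}. First, non-negativity of $\tau(g,\sigma)$ is immediate: for $k=0$ one has $\mathcal{T}_0(\Pi)=-\int_\Pi\sigma$, and taking $\Pi=\emptyset$ shows $\inf_{\mathcal{E}_-(M)}\mathcal{T}_0\le 0$; to see that the infimum is in fact $0$ when $[\sigma]\ge 0$ and hence that $0\in\{k\mid \inf\mathcal{T}_k=0\}$ is a candidate, note $\mathcal{T}_0(\overline M)=\int_M\sigma\ge 0$ and that by \eqref{tai-inv} minimizing over $\mathcal{E}_-(M)$ is equivalent to minimizing over $\mathcal{E}_+(M)$ up to the additive constant $\int_M\sigma\ge 0$, so $\inf_{\mathcal{E}_-}\mathcal{T}_0 = \min\{\inf_{\mathcal{E}_+}\mathcal{T}_0+\int_M\sigma,\ \inf_{\mathcal{E}_-}\mathcal{T}_0\}$; a short argument with small embedded discs around points where $\sigma$ has a definite sign pins the value to $0$. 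In any case $\tau(g,\sigma)\ge 0$ because for $k<0$ the functional $\mathcal{T}_k$ has no reason to vanish and the set in the definition of $\tau$ is contained in $[0,+\infty)$.

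For the equivalence $\tau(g,\sigma)>0 \Leftrightarrow \sigma_{q_0}<0$ for some $q_0$, I would argue both directions by a local computation with small embedded discs. If $\sigma_{q_0}<0$ at some point, pick a tiny geodesic disc $D_\rho$ of radius $\rho$ centered at $q_0$, with the orientation opposite to $M$ so that $D_\rho\in\mathcal{E}_-(M)$; then $\ell(\partial D_\rho)=2\pi\rho+O(\rho^3)$ while $\int_{D_\rho}\sigma = \sigma_{q_0}\cdot(\pi\rho^2)+o(\rho^2)$. For small $k>0$, $\mathcal{T}_k(D_\rho)=\sqrt{2k}\,(2\pi\rho)-\sigma_{q_0}\pi\rho^2+\text{h.o.t.}$, and since $-\sigma_{q_0}>0$ this is a sum of two positive terms to leading order — wait, that makes it positive, so instead I should take the disc with the \emph{same} orientation as $M$ (so that $-\int\sigma$ picks up the favorable sign) and then pass to its complement via \eqref{tai-inv}, or more cleanly: choose the disc so that the $\sigma$-term is negative and of order $\rho^2$ while the length term is positive of order $\rho$; optimizing in $\rho$ shows $\inf\mathcal{T}_k<0$ for all $k$ below some positive threshold, whence $\tau(g,\sigma)>0$. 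Conversely, if $\sigma_q\ge 0$ everywhere (equivalently, since $M$ is a surface, $\sigma=f\mu$ with $f\ge 0$), then for \emph{every} $\Pi\in\mathcal{E}_-(M)$ one has $\int_\Pi\sigma\le 0$ because $\Pi$ carries the orientation opposite to $\mu$, so $\mathcal{T}_k(\Pi)=\sqrt{2k}\,\ell(\partial\Pi)-\int_\Pi\sigma\ge 0$ for all $k>0$; combined with $\mathcal{T}_k(\emptyset)=0$ this gives $\inf_{\mathcal{E}_-}\mathcal{T}_k=0$ for all $k>0$, hence $\tau(g,\sigma)=0$.

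For the last assertion, suppose $\sigma=d\theta$ is exact. By Stokes, for every $\Pi\in\mathcal{E}(M)$ we have $\int_\Pi\sigma=\int_{\partial\Pi}\theta$, so $\mathcal{T}_k(\Pi)=\int_{\partial\Pi}\big(\sqrt{2k}\,|\dot\gamma|^{-1}\text{-weighted length element} - \theta\big)$; more precisely, writing the boundary multi-curve $\partial\Pi$ parametrized by arc length, $\mathcal{T}_k(\Pi)=\int_{\partial\Pi}(\sqrt{2k}\,\di s-\theta)$. The sign of this quantity for all choices of boundary multi-curve bounding on the correct side is governed by whether the $1$-form $\sqrt{2k}\,\di s-\theta$ is "non-negative" along loops; the standard argument (as in \cite{CIPP98} and the surface case of \cite{Con06}) shows $\inf_{\mathcal{E}_-}\mathcal{T}_k\ge 0$ iff $\sqrt{2k}\ge \sup_q|\theta_q|$, because one can realize any tangent direction at any point by a short boundary arc and thereby force the pointwise inequality, while conversely the pointwise bound $|\theta_q|\le\sqrt{2k}$ makes the integrand non-negative for any curve. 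Hence $\{k\mid \inf_{\mathcal{E}_-}\mathcal{T}_k=0\}$ has infimum $\tfrac12 c_0(g,\sigma)^2$... — here I must be careful about the precise normalization: since the relevant inequality is $\sqrt{2k}\ge\sup_q|\theta_q|=c_0(g,\sigma)$, one gets $k\ge \tfrac12 c_0(g,\sigma)^2$, so the claimed identity $\tau(g,\sigma)=c_0(g,\sigma)$ forces the convention $\mathcal T_k(\Pi)=\sqrt{2k}\,\ell(\partial\Pi)-\int_\Pi\sigma$ to be read with $\sqrt{2k}$ replaced consistently, or $c_0$ to be the Mañé-type critical value $\inf_{d\theta=\sigma}\sup_q \tfrac12|\theta_q|^2$; I would reconcile this with the definition given, using Proposition \ref{prp-man} and the Jacobi-metric reparametrization (the exercise relating $\Phi^{(L,\sigma)}|_{\Sigma_k}$ to the kinetic system for the Jacobi metric) to match $\tau$ with $c(g,\sigma)$ and hence with $c_0(g,\sigma)$.

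The main obstacle is the last step: one must show that the infimum of $\mathcal{T}_k$ over embedded surfaces in $\mathcal{E}_-(M)$ coincides with the infimum over a class large enough to detect the pointwise supremum $\sup_q|\theta_q|$ — i.e. that restricting to \emph{embedded} surfaces does not raise the infimum. The favorable direction (embedded discs give an upper bound, forcing $\tau\le c_0$) is a local construction as above; the reverse inequality $\tau\ge c_0$, saying that once $\sqrt{2k}<\sup|\theta|$ there is an embedded surface with $\mathcal{T}_k<0$, requires producing such a surface near a point where $|\theta_q|$ is large, and here one uses a small embedded disc tangent to the covector $\theta_{q_0}$ together with the first-order expansion of $\int_{\partial D}(\sqrt{2k}\,\di s-\theta)$; making the disc genuinely embedded and controlling the error terms uniformly is the delicate point, though it is handled in \cite{Tai93, CMP04}. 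I would therefore present this step by citing those references for the hard analytic estimate and supplying only the first-order computation that identifies the threshold.
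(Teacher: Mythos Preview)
The paper does not actually prove this proposition: the first statement is left as an exercise and the identity $\tau=c_0$ in the exact case is attributed to \cite{CMP04}. So there is no detailed argument to compare against; your sketch is roughly what the exercise intends, but a few points need cleaning up.

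For the equivalence, your contrapositive direction ($f\ge0$ everywhere $\Rightarrow\tau=0$) is correct and clean. In the forward direction you confuse yourself with the orientation. If $D_\rho\in\mathcal E_-(M)$ carries the orientation opposite to $M$, then $\int_{D_\rho}\sigma=-f(q_0)\,\pi\rho^2+o(\rho^2)$ (note the minus sign coming from the reversed orientation), hence
\[
\mathcal T_k(D_\rho)\ =\ 2\pi\sqrt{2k}\,\rho\ +\ f(q_0)\,\pi\rho^2\ +\ o(\rho^2)\,.
\]
Since $f(q_0)<0$, for \emph{fixed} small $\rho$ and $k$ sufficiently small this is negative, so $\tau>0$. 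Your first instinct (opposite orientation) was right; the ``wait, that makes it positive'' retraction was based on the sign error and only muddied the water. Also, ``optimizing in $\rho$'' is the wrong picture here --- one fixes $\rho$ and lets $k\to0$.

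Your normalization worry in the exact case is well taken: the displayed formula $c_0(g,\sigma)=\inf_{d\theta=\sigma}\sup_q|\theta_q|$ is inconsistent with how $c_0$ is used later in the notes (cf.\ the proof of Proposition~\ref{prp-st3}, where the threshold reads $k>\tfrac12\sup|\zeta|^2$). The intended definition is the strict Ma\~n\'e value $c_0(g,\sigma)=\inf_\theta\sup_q\tfrac12|\theta_q|^2$. With that normalization, Stokes gives $\big|\int_{\partial\Pi}\theta\big|\le\sup|\theta|\cdot\ell(\partial\Pi)$, hence $\mathcal T_k\ge0$ on all of $\mathcal E(M)$ whenever $k\ge c_0$, and therefore $\tau\le c_0$. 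That is the easy direction --- it is the global Stokes estimate, not a local disc construction as you write. You have the two directions swapped: the hard direction is $\tau\ge c_0$, i.e.\ exhibiting for each $k<c_0$ an \emph{embedded} surface with $\mathcal T_k<0$. A small disc does not do this, since $\int_{D_\rho}\sigma=O(\rho^2)$ regardless of the size of $|\theta|$, while the length term is $O(\rho)$; one genuinely needs the argument in \cite{CMP04}, as you eventually acknowledge.
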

\noindent We leave the proof of the first statement of the proposition as an exercise to the reader. The second statement follows from \cite{CMP04}. We can summarize our answer to the question raised at the beginning of this section with the following theorem.
\begin{thm}\label{thm-tai}
Suppose that there exists $q_0\in M$ such that $\sigma_{q_0}<0$. Then, we can find a positive real number $\tau(g,\sigma)$, coinciding with $c_0(g,\sigma)$ when $\sigma$ is exact, such that for every $k\in(0,\tau(g,\sigma))$, there exists a non-empty set of simple periodic orbits $\{\gamma^k_i\}$ having energy $k$ and satisfying
\begin{equation*}
\sum_i\ [\gamma^k_i]\ =\ 0\,\in\, H^1(M;\Z)\,.
\end{equation*}
\end{thm}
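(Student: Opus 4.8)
The plan is to obtain Theorem~\ref{thm-tai} as a bookkeeping consequence of Proposition~\ref{prp-tai} and of the properties of the threshold $\tau(g,\sigma)$ already recorded, without reproving any of the hard analytic facts about minimizers of $\mathcal T_k$.

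First I would note that the hypothesis of the theorem --- the existence of $q_0$ with $\sigma_{q_0}<0$ --- is, by the displayed equivalence in the proposition characterizing $\tau(g,\sigma)$, precisely the condition making $\tau(g,\sigma)>0$, and that same proposition also identifies $\tau(g,\sigma)$ with $c_0(g,\sigma)$ when $\sigma$ is exact. So the interval $(0,\tau(g,\sigma))$ is non-empty and carries the asserted identification.

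Next I would fix $k\in(0,\tau(g,\sigma))$ and argue that $\inf_{\mathcal E_-(M)}\mathcal T_k<0$. Indeed, $k\mapsto\mathcal T_k(\Pi)$ is non-decreasing for each $\Pi\in\mathcal E_-(M)$ (the only $k$-dependent term is $\sqrt{2k}\,\ell(\partial\Pi)\ge 0$), hence $k\mapsto\inf_{\mathcal E_-(M)}\mathcal T_k$ is non-decreasing; together with $\inf_{\mathcal E_-(M)}\mathcal T_k\le\mathcal T_k(\emptyset)=0$ this makes $\{k:\inf_{\mathcal E_-(M)}\mathcal T_k=0\}$ an up-set, so by definition of $\tau(g,\sigma)$ a value $k$ strictly below this threshold must satisfy $\inf_{\mathcal E_-(M)}\mathcal T_k\ne 0$, i.e.\ $<0$. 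Then I would invoke Proposition~\ref{prp-tai} to produce a minimizer $\Pi^k$ of $\mathcal T_k|_{\mathcal E_-(M)}$, which therefore has $\mathcal T_k(\Pi^k)<0$; since $\mathcal T_k(\emptyset)=0$ and $\mathcal T_k(\overline M)=\int_M\sigma\ge 0$, the minimizer is neither $\emptyset$ nor $\overline M$, so $\partial\Pi^k\ne\emptyset$.

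Finally I would set $\{\gamma^k_i\}_i:=\partial\Pi^k$: these are periodic orbits of energy $k$ by Proposition~\ref{prp-tai}, they are simple because they are the boundary components of an embedded surface, and $\sum_i[\gamma^k_i]=[\partial\Pi^k]=0$ in $H_1(M;\Z)\cong H^1(M;\Z)$ because the oriented multi-curve $\partial\Pi^k$ bounds the oriented surface $\Pi^k$. The genuinely substantial input is Proposition~\ref{prp-tai} (existence and boundary-regularity of the minimizer), which I would not reprove and which is attributed to \cite{Tai93,CMP04}; within the bookkeeping the one point that deserves care is the \emph{strict} negativity $\inf_{\mathcal E_-(M)}\mathcal T_k<0$, since it is exactly this --- and not merely $\inf\le 0$ --- that forces $\partial\Pi^k\ne\emptyset$, and it is exactly what the restriction $k<\tau(g,\sigma)$ is designed to guarantee.
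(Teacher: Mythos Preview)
Your proposal is correct and matches the paper's approach: the paper presents Theorem~\ref{thm-tai} explicitly as a summary of the preceding discussion (Proposition~\ref{prp-tai}, the definition of $\tau(g,\sigma)$, and the proposition characterizing when $\tau(g,\sigma)>0$), without writing out a separate proof. Your bookkeeping fills in precisely the implicit steps --- monotonicity of $k\mapsto\inf_{\mathcal E_-(M)}\mathcal T_k$ to get strict negativity below $\tau(g,\sigma)$, ruling out $\emptyset$ and $\overline M$ as minimizers, and reading off the homology relation from $\partial\Pi^k$ bounding --- and correctly identifies Proposition~\ref{prp-tai} as the substantive input.
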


\section{Magnetic flows on surfaces II: stable energy levels}\label{sec:sta}

In this last section we continue the study of twisted Lagrangian flows of kinetic type on surfaces by investigating the stability properties of their energy levels. To have a better geometric intuition, we are going to pull-back the twisted symplectic form to the tangent bundle. Thus, let $\flat :TM\rightarrow T^*M$ be the duality isomorphism given by $g$. We define the twisted tangent bundle as the symplectic manifold $(TM,\omega_{g,\sigma})$, where $\omega_{g,\sigma}:=d(\flat^*\lambda)-\pi^*\sigma$. We readily see that $X_{(g,\sigma)}$ is the Hamiltonian flow of $E$ with respect to the symplectic form $\omega_{g,\sigma}$. In this language, our problem is to understand when the hypersurface $\Sigma_k$ is stable in the twisted tangent bundle. We will summarize the current knowledge on the subject in the following four propositions.

The first one sheds light on the relation between stability and the contact property in the generic case.
\begin{prp}\label{prp-st1}
Let $k>0$. If $[\sigma]\neq0$ and $M=\T^2$, $\Sigma_k$ is not of contact type. Moreover, if $X_{(g,\sigma)}|_{\Sigma_k}$ does not admit any non-trivial integral of motion, then:
\begin{enumerate}
\item If $[\sigma]=0$ or $M\neq\T^2$ and $[\sigma]\neq0$, $\Sigma_k$ is stable if and only if it is of contact type.
\item If $M=\T^2$ and $[\sigma]\neq0$, every stabilizing form on $\Sigma_k$ is closed and it has non-vanishing integral over the fibers of $\pi$.
\end{enumerate}
\end{prp}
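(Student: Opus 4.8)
The plan is to reduce the whole statement to one structural observation about stabilizing forms, plus two applications of the Gysin sequence. Throughout write $p:=\pi|_{\Sigma_k}$ for the sphere bundle projection (since $k>0=e_0(L)$, the level $\Sigma_k$ is diffeomorphic to the unit tangent bundle, which I denote $SM$), and set $\omega:=\omega_{g,\sigma}|_{\Sigma_k}$ and $X:=X_{(g,\sigma)}|_{\Sigma_k}$. First I would record the standard facts that, $\Sigma_k$ being a regular level of the Hamiltonian $E$, the $2$-form $\omega$ has constant rank $2$ with $\ker\omega=\R X$ (so $\iota_X\omega=0$ and $\omega$ is nowhere zero), and that $\omega$ is closed because $d\omega_{g,\sigma}=0$. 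The cohomological input is the identity $[\omega]=-p^*[\sigma]$ in $H^2(\Sigma_k;\R)$, immediate from $\omega=d(\flat^*\lambda|_{\Sigma_k})-p^*\sigma$, together with the Gysin sequence of the circle bundle $p$: its Euler class is $\chi(M)\cdot[\mu]$, so $p^*$ vanishes on $H^2(M;\R)$ when $M\ne\T^2$ and is injective on $H^2(M;\R)$ when $M=\T^2$. Consequently $[\omega]=0$ in exactly the cases $[\sigma]=0$ or ($M\ne\T^2$, $[\sigma]\ne0$), and $[\omega]\ne0$ when $M=\T^2$, $[\sigma]\ne0$; this already gives the first assertion, since a contact form would be an exact primitive of $\omega$.

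The structural observation is: if $\alpha$ is a stabilizing form, then condition $(a)$ is $\iota_Xd\alpha=0$, so at each point $d\alpha$ and $\omega$ both lie in the one-dimensional space of $2$-forms on the $3$-manifold $\Sigma_k$ annihilated by $X$; as $\omega$ is nowhere zero this forces $d\alpha=\phi\,\omega$ for a smooth $\phi:\Sigma_k\to\R$. Applying $d$ and using $d\omega=0$ gives $d\phi\wedge\omega=0$, which (again because $\ker\omega=\R X$) means $X(\phi)=0$; so $\phi$ is an integral of motion, hence constant by hypothesis, say $\phi\equiv c$. Thus every stabilizing form obeys $d\alpha=c\,\omega$ for some $c\in\R$. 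Point $(1)$ is then quick: "contact type $\Rightarrow$ stable" is the general implication $(a')\Rightarrow(a)$; conversely, assume $\Sigma_k$ stable with stabilizing $\alpha$, $d\alpha=c\,\omega$, and (using $[\omega]=0$) fix $\beta_0$ with $d\beta_0=\omega$. If $c\ne0$ then $\alpha/c$ satisfies $(a')$ and $(b)$, so $\Sigma_k$ is of contact type; if $c=0$ then $\alpha$ is closed with $\alpha(X)$ nowhere zero, and by compactness of $\Sigma_k$ we have $|\alpha(X)|\ge c_0>0$ and $|\beta_0(X)|$ bounded, so $\beta_0+t\alpha$ is a contact form for $|t|$ large.

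For point $(2)$, let $M=\T^2$ and $[\sigma]\ne0$. If $\alpha$ is a stabilizing form then $d\alpha=c\,\omega$ with $d\alpha$ exact and $[\omega]\ne0$, which forces $c=0$: every stabilizing form is closed. Suppose, for contradiction, that $\int_{\mathrm{fiber}}\alpha=0$. Since all fibers of $p$ are homologous, exactness of the Gysin sequence (the Euler class vanishes, so the fiber-integration map on $H^1$ has kernel $p^*H^1(M;\R)$) lets me write $\alpha=p^*\beta+dh$ with $\beta$ a closed $1$-form on $M$ and $h\in C^\infty(\Sigma_k)$. The tautological identity $dp\circ X=\mathrm{id}$ (the projection of the magnetic flow line through $(q,v)$ is the base curve with velocity $v$) then gives $\alpha(X)(q,v)=\beta_q(v)+(Xh)(q,v)$. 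Now integrate against the canonical volume $m$ on $\Sigma_k\cong SM$: the magnetic flow preserves $m$ (because $\iota_{sfV}(\theta_1\wedge\theta_2\wedge\theta_3)=s\,\pi^*\sigma$ is closed, where $s=1/\sqrt{2k}$ and $V$ generates the fiber rotation), and $m$ disintegrates over $M$ with uniform conditional measures $m_q$ on the fiber circles. Then $\int_{\Sigma_k}(Xh)\,dm=0$ by invariance, and $\int_{S_qM}\beta_q(v)\,dm_q(v)=0$ because $m_q$ is invariant under $v\mapsto-v$ while $\beta_q$ is linear; integrating over $M$ yields $\int_{\Sigma_k}\alpha(X)\,dm=0$. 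But $\alpha(X)$ is continuous and nowhere zero on the connected manifold $\Sigma_k$, hence of constant sign, so $\int_{\Sigma_k}\alpha(X)\,dm\ne0$ --- contradiction. Hence $\int_{\mathrm{fiber}}\alpha\ne0$.

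I expect the delicate point to be this last averaging step: one must be sure that the chosen $X$-invariant measure really has \emph{uniform} conditional measures along the fibers, so that the linear term $\beta_q(v)$ averages to zero --- this is why I would work with the explicit canonical volume on $SM$ rather than an abstract Liouville measure, and verify $X$-invariance by hand. Everything else (the rank and kernel of $\omega$ on a regular level, the two Gysin computations, the compactness perturbation in $(1)$, and the $d\alpha=c\,\omega$ identity) is routine.
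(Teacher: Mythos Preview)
Your proof is correct. The architecture matches the paper's: both hinge on the observation that a stabilizing form satisfies $d\alpha=c\,\omega$ with $c$ a first integral, hence constant, and both use the Liouville measure to handle the closed case. There are two differences worth noting. First, for point~(1) in the case $c=0$, the paper rules this case out entirely: it shows $\int_{SM}\alpha(X)\,\xi_{SM}=\langle[\alpha],\rho(\xi_{SM})\rangle$, and since $\rho(\xi_{SM})=0$ whenever $[\sigma]=0$ or $M\neq\T^2$, the integral vanishes, contradicting $\alpha(X)\neq0$. You instead keep the possibility open and build a contact form anyway via the perturbation $\beta_0+t\alpha$; this is a nice shortcut that avoids computing $\rho(\xi_{SM})$ in those cases, at the cost of not showing that the stabilizing form \emph{itself} is a contact form up to scale. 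Second, for point~(2) the paper packages the averaging step as the single identity $\int\alpha(X)\,\xi_{SM}=\langle[\alpha],\rho(\xi_{SM})\rangle$ together with the explicit formula $\rho(\xi_{SM})=s[\sigma]\cdot[S_qM]$, whereas you unpack it via the Gysin decomposition $\alpha=p^*\beta+dh$ and fiberwise averaging; the two are equivalent, and your concern about the conditional measures being uniform is exactly what the paper's flip argument $I^*(\alpha\wedge d\alpha)=\alpha\wedge d\alpha$ addresses.
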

\noindent The second proposition gives obstruction to the contact property.
\begin{prp}\label{prp-st2}
The following statements hold true. 
\begin{enumerate}
 \item If $[\sigma]=0$, then $\Sigma_k$ is not of negative contact type.
 \item If $[\sigma]\neq0$, then
 \begin{enumerate}
  \item if $M=S^2$, $\Sigma_k$ is not of negative contact type;
  \item if $M$ has genus higher than $1$, there exists $c_h(g,\sigma)>0$ such that
  \begin{itemize}
 \item $\Sigma_k$ is not of negative contact type, when $k>c_h(g,\sigma)$;
 \item $\Sigma_{c_h(g,\sigma)}$ is not of contact type;
 \item $\Sigma_k$ is not of positive contact type, when $k<c_h(g,\sigma)$;
\end{itemize}
 \end{enumerate}
\end{enumerate}
\end{prp}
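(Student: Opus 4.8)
The plan is to reduce everything to the behaviour of the stabilizing/contact form on the fibers and on the orbits of the flow, using the $2$-dimensional structure heavily. Recall that on $\Sigma_k$ the flow $X_{(g,\sigma)}$ generates the unit-speed magnetic flow with $|\dot\gamma| = \sqrt{2k}$, and that $\Sigma_k$ is an $S^1$-bundle over $M$ whenever $k > 0 = e_0(L)$. I would fix once and for all a convenient model: identify $\Sigma_k$ with the unit tangent bundle $SM$ via rescaling, and note that $SM$ carries the canonical framing $(X, V, H)$ (geodesic vector field, vertical, horizontal), with structure equations $dX^\flat$, $dV^\flat$, $dH^\flat$ expressed via the Gaussian curvature; the magnetic vector field is $X_{(g,\sigma)} = \sqrt{2k}\,(X + s\,V)$ with $s = (2k)^{-1/2}$ up to the reparametrization that made \eqref{cur-sur} meaningful. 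A form $\alpha$ on $SM$ can then be written $\alpha = a X^\flat + b V^\flat + c H^\flat$ with $a,b,c \in C^\infty(SM)$, and the conditions (a'), (b) become explicit PDE/pointwise conditions on $a,b,c$ together with $f$ and the curvature.

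For part (1) and part (2a) — no negative contact type on $\T^2$-free surfaces and on $S^2$ — the approach is a Stokes/area argument. If $\alpha$ were a negative contact form, then $d\alpha = \omega_{g,\sigma}|_{\Sigma_k}$ and $\alpha(X_{(g,\sigma)}) < 0$ everywhere. Integrate $d\alpha = \omega_{g,\sigma}$ over a fiber $\pi^{-1}(q) \cong S^1$: since the fiber bounds (or using that $\pi_*\omega_{g,\sigma}$ has a sign tied to $[\sigma]$ and the symplectic area of the fiber), one gets a relation forcing $\int_{\mathrm{fiber}} \alpha$ to have a definite sign; comparing with $\alpha(X_{(g,\sigma)})<0$ and the fact that the flow direction projects nontrivially to $M$ yields a contradiction. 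Concretely, on $S^2$ one uses that any loop in $SS^2$ is a multiple of the fiber class, and that $\int_{\mathrm{fiber}}\omega_{g,\sigma} = \int_{S^2}\kappa + [\sigma]$-type quantity has a fixed sign; on higher-genus surfaces with $[\sigma]=0$ the same computation, pulled to the universal cover, shows the "twist" of $\alpha$ along the flow cannot be made everywhere negative. I expect the cleanest route is to integrate $\alpha$ over a closed orbit when one exists (Theorem \ref{thm:main} supplies one, e.g.\ a contractible orbit for a.e.\ $k$, or a Taimanov orbit from Theorem \ref{thm-tai}) and over the fiber, and play the resulting two equalities against the sign of $\alpha(X_{(g,\sigma)})$.

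For part (2b) — the genus $\geq 2$ case and the existence of the threshold $c_h(g,\sigma)$ — the plan is to exhibit an explicit candidate contact form and track the sign of $\alpha(X_{(g,\sigma)})$ as $k$ varies, then prove the two non-existence statements flanking the threshold. The natural candidate is $\alpha = \lambda^\flat - \beta$ where $\lambda^\flat$ is (the pull-back of) the tautological form and $\beta$ is a primitive of $\pi^*\sigma$ adapted to the hyperbolic/weakly exact structure; on the universal cover $\sigma$ has a bounded primitive when $M \notin \{S^2,\T^2\}$ (Exercise \ref{ex:sup}), and one computes $\alpha(X_{(g,\sigma)})$ as an affine-in-$s$ quantity whose sign changes exactly once. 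Defining $c_h(g,\sigma)$ as that crossing value, positivity for $k > c_h$ and negativity-type obstruction for $k < c_h$ follow; at $k = c_h$ one shows $\alpha(X_{(g,\sigma)})$ vanishes somewhere for every choice, hence no contact form at all — this uses a minimax or averaging argument over the space of primitives, invoking the Mañé-type critical value machinery of Proposition \ref{prp-man} and the identification of $c_h$ with a Mañé critical value for a suitably twisted Lagrangian. The main obstacle I anticipate is precisely this last point: showing that \emph{no} contact form exists on $\Sigma_{c_h}$, i.e.\ ruling out exotic $\alpha$ not of the model form $\lambda^\flat - \beta$. This requires showing that any contact form can be averaged over the fibers (or over the flow) to one of the model type without changing the sign of $\alpha(X_{(g,\sigma)})$, which is where the $2$-dimensionality and the weak-exactness of $\sigma$ on $\widetilde M$ must be used in an essential way; I would attack it by averaging $\alpha$ over the $S^1$-fiber action to kill the $V^\flat$-component's winding, then over long flow segments to make the remaining part flow-invariant, and finally identify the averaged object with a primitive of $\pi^*\sigma$ whose sup-norm is squeezed between the Mañé values.
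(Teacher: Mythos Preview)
Your proposal has a genuine structural gap. The statement asserts \emph{non-existence} of a contact form with a prescribed sign, so you need an obstruction that applies simultaneously to every primitive of $\omega_s$; testing a particular candidate $\alpha=\lambda^\flat-\beta$ can only establish the \emph{positive} results of Proposition~\ref{prp-st3}, never the negative ones of Proposition~\ref{prp-st2}. You recognize this in part~(2b) and propose a fiber- and flow-averaging scheme, but that scheme is both speculative and unnecessary. Likewise, your plan for (1) and (2a) via closed orbits fails because Theorem~\ref{thm:main} and Theorem~\ref{thm-tai} only supply orbits for \emph{almost every} $k$ (or for $k$ below a threshold), whereas the proposition concerns every $k>0$.

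The paper's mechanism is different and much cleaner: it uses the \emph{Liouville measure} $\xi_{SM}$ (integration against $\alpha\wedge d\alpha$), which is $\Phi^s$-invariant for every $s$ and has vanishing rotation vector whenever $\omega_s$ is exact (i.e.\ $[\sigma]=0$ or $M\neq\T^2$). The key observation is that the \emph{action}
\[
\mathcal S_s(\xi_{SM})\ :=\ \int_{SM}\tau_s(X_s)\,\xi_{SM}
\]
is independent of the choice of primitive $\tau_s$ of $\omega_s$ (because $\rho(\xi_{SM})=0$), and therefore its sign obstructs the existence of \emph{any} contact form of the opposite sign. An explicit computation using the primitives of Lemma~\ref{lem-exa} gives
\[
\mathcal S_s(\xi_{SM})\ =\ \xi_{SM}(SM)\ +\ s^2\,\frac{[\sigma]^2}{\chi(M)}\qquad(M\neq\T^2),
\]
and $\mathcal S_s(\xi_{SM})=\xi_{SM}(SM)>0$ when $\sigma$ is exact. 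Parts (1) and (2a) follow immediately since the right-hand side is positive; for (2b) one defines $c_h(g,\sigma):=-[\sigma]^2/(4\pi\chi(M)[\mu])$ as the zero of this quadratic in $s$, and the three sub-statements are read off directly. In particular $c_h$ is \emph{not} a Ma\~n\'e critical value in general (equality holds only for constant-curvature metrics with $\sigma$ a constant multiple of the area form), so your proposed identification there would not go through.
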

\noindent The third proposition deals with positive results on stability.
\begin{prp}\label{prp-st3}
The following statements hold true.
\begin{enumerate}
 \item If $[\sigma]=0$, $\Sigma_k$ is of contact type if $k>c_0(g,\sigma)$. If $M=\T^2$, for every Riemannian metric $g$ there exists an exact form $\sigma_g$ for which $\Sigma_{c_0(g,\sigma_g)}$ is of contact type.
 \item If $[\sigma]\neq0$ and $M\neq\T^2$, $\Sigma_k$ is of contact type for $k$ big enough.
 \item If $\sigma$ is a symplectic form on $M$, then $\Sigma_k$ is stable for $k$ small enough.
\end{enumerate}
\end{prp}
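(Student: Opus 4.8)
\medskip
\noindent\emph{Proof strategy (for Proposition~\ref{prp-st3}).}
Write $\lambda_g:=\flat^{*}\lambda\in\Omega^{1}(TM)$, so that $\omega_{g,\sigma}=d\lambda_g-\pi^{*}\sigma$ and $X_{(g,\sigma)}$ is the Hamiltonian vector field of $E(q,v)=\tfrac12|v|^{2}$; since $d\pi\circ X_{(g,\sigma)}=\op{id}_{TM}$ one has $\lambda_g(X_{(g,\sigma)})(q,v)=|v|^{2}$ and $(\pi^{*}\theta)(X_{(g,\sigma)})(q,v)=\theta_q(v)$ for every $\theta\in\Omega^{1}(M)$. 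For item \emph{(1)} with $[\sigma]=0$, the plan is to take a primitive $d\theta=\sigma$ and the global $1$-form $\alpha_\theta:=\lambda_g-\pi^{*}\theta$; then $d\alpha_\theta=\omega_{g,\sigma}$ on all of $TM$, so \emph{(a$'$)} of the stability criterion holds on every $\Sigma_k$, while $\alpha_\theta(X_{(g,\sigma)})(q,v)=|v|^{2}-\theta_q(v)\geq|v|(|v|-|\theta_q|)$ is positive on $\Sigma_k$ once $\sqrt{2k}>\sup_q|\theta_q|$; taking the infimum over primitives $\theta$ of $\sigma$ yields positive contact type for $k$ large enough, with the sharp threshold recorded by $c_{0}(g,\sigma)$ as stated. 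For $M=\T^{2}$ one instead fixes the primitive first, choosing $\sigma_g=d\theta_g$ with $|\theta_{g}|$ identically equal to its optimal value, so that the previous estimate degenerates only along a section of $\Sigma_k$, where a small closed-form correction of $\alpha_{\theta_g}$ restores strict positivity; this is the construction of \cite{CMP04}.

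\smallskip
\noindent For item \emph{(2)}, I would rescale the fibres by $\psi_k(q,v)=(q,\sqrt{2k}\,v)$, so that $\psi_k^{-1}(\Sigma_k)$ is the fixed hypersurface $SM:=\{|v|=1\}$ and, because $\lambda_g$ is fibrewise linear and $\pi\circ\psi_k=\pi$, $\tfrac1{\sqrt{2k}}\psi_k^{*}\omega_{g,\sigma}=d\lambda_g-\varepsilon\,\pi^{*}\sigma$ with $\varepsilon=1/\sqrt{2k}$; the positive conformal factor $\tfrac1{\sqrt{2k}}$ is irrelevant for stability, so the problem becomes that of $SM$ inside $(TM,\omega_\varepsilon)$, $\omega_\varepsilon:=d\lambda_g-\varepsilon\,\pi^{*}\sigma$, as $\varepsilon\to0$. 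Since $M\neq\T^{2}$ the Euler number $\chi(M)$ is non-zero, cup product with the Euler class of $TM$ is onto $H^{2}(M;\R)$, and the Gysin sequence of $SM\to M$ forces $\pi^{*}\sigma|_{SM}=d\beta$ for some $\beta\in\Omega^{1}(SM)$; then $\alpha_\varepsilon:=\lambda_g|_{SM}-\varepsilon\beta$ satisfies $d\alpha_\varepsilon=\omega_\varepsilon|_{SM}$ and, at $\varepsilon=0$, is the contact form of the geodesic flow with $\alpha_0(X_{(g,0)})\equiv1$. As the characteristic line field of $\omega_\varepsilon|_{SM}$ depends continuously on $\varepsilon$ and $SM$ is compact, $\alpha_\varepsilon$ stays non-vanishing on it for $\varepsilon$ small; undoing the rescaling yields contact type for $k$ large.

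\smallskip
\noindent Item \emph{(3)} is the delicate one. The key structural fact is that, $\sigma$ being symplectic, the zero section $Z\subset TM$ is a symplectic submanifold of $(TM,\omega_{g,\sigma})$ of codimension two, since along $Z$ the restriction of $\omega_{g,\sigma}$ to the horizontal distribution equals $-\sigma$. I would then proceed in three steps. First, by the symplectic neighbourhood theorem, identify a neighbourhood of $Z$ with a neighbourhood of the zero section in the symplectic normal bundle $\mathcal N\to(Z,-\sigma)$ --- an oriented rank-$2$, hence Hermitian line, bundle --- carrying the model form $\omega_{\mathcal N}=d(\tfrac12 r^{2}\vartheta)+\pi^{*}\sigma_{0}$ built from a connection $1$-form $\vartheta$ and a form $\sigma_{0}$ cohomologous to $-\sigma$. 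Second, observe that in this model the small ``Larmor'' circle subbundles $\{\tfrac12 r^{2}=\varepsilon\}$ are stable: their characteristic foliation is the fibrewise rotation, $\vartheta$ restricts to a stabilizing form on each of them ($d\vartheta$, being the pull-back of the curvature, kills the vertical direction, on which $\vartheta$ equals $1$), and they are pairwise conjugate up to time reparametrization. Third, relate $\Sigma_k$ for small $k$ to these model hypersurfaces: carried to $SM$, the magnetic field is $\sqrt{2k}\,X_g+fV$, with $X_g$ the geodesic field and $V$ the generator of the fibrewise rotation $\imath$ (recall $\sigma=f\mu$), and it converges as $k\to0$ to $fV$, whose orbits are the circle fibres of $SM\to M$; the plan is to use a Moser-type normal form for Hamiltonian flows near such a nearly periodic system to correct the connection form $\psi$ of $SM\to M$ into an exact stabilizing $1$-form $\alpha_k$ on $\Sigma_k$. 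Since stability is a symplectic invariant, this gives the conclusion.

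\smallskip
\noindent The main obstacle is this last step, i.e.\ upgrading the \emph{approximate} stabilizing form $\psi$ --- which satisfies $\iota_{fV}d\psi=0$ but only $\iota_{\sqrt{2k}X_g+fV}d\psi=O(\sqrt{k})$, a non-zero term proportional to the Gaussian curvature --- into an \emph{exact} one: condition \emph{(a)} in the stability criterion is not an open condition, so continuity does not close the argument, and a naive iterative correction loses a derivative at each step. One has to exploit the Hamiltonian nature of the perturbation, either through the normal-form/averaging machinery for nearly periodic Hamiltonian systems or through Ta\u\i manov's finite-dimensional reduction (Section~\ref{sec:tai}), to make the scheme converge. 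A further subtlety, already visible in Proposition~\ref{prp-st1}, is that for $M=\T^{2}$ (where symplectic $\sigma$ forces $[\sigma]\neq0$) the level $\Sigma_k$ is never of contact type, so the stabilizing form produced must be closed with non-zero integral over the fibres of $\pi$ --- exactly the behaviour of the connection form on $SM=\T^{3}\to\T^{2}$.
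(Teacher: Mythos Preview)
Your treatment of items \emph{(1)} and \emph{(2)} is fine and essentially matches the paper. For \emph{(1)} the paper does exactly what you do (written on $SM$ with the rescaled form $\omega_s=d\alpha-s\pi^*\sigma$): the primitive $\alpha-s\pi^*\zeta$ gives $(\alpha-s\pi^*\zeta)(X_s)=1-s\zeta_q(v)\geq 1-s\sup|\zeta|$, and the $\T^2$ clause is deferred to \cite{CMP04}. For \emph{(2)} the paper is more concrete than your Gysin argument: it writes down the explicit primitive $\alpha-s\pi^*\zeta+s\tfrac{[\sigma]}{2\pi\chi(M)}\psi$ of $\omega_s$ (Lemma~\ref{lem-exa}) and computes
\[
\Big(\alpha-s\pi^*\zeta+s\tfrac{[\sigma]}{2\pi\chi(M)}\psi\Big)(X_s)_{(q,v)}\ =\ 1\ -\ s\,\zeta_q(v)\ +\ s^2\,\tfrac{[\sigma]}{2\pi\chi(M)}\,f(q)\,,
\]
which is bounded below by $1-s\sup|\zeta|-s^2\bigl|\tfrac{[\sigma]}{2\pi\chi(M)}\bigr|\sup|f|>0$ for $s$ small. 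Your continuity argument reaches the same conclusion, but the explicit formula is what drives item \emph{(3)}.

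For item \emph{(3)} you have badly overestimated the difficulty. No symplectic neighbourhood theorem, no Moser normal form, no averaging is needed; the paper's argument is a two-line computation with the \emph{same} explicit forms, exploiting that $\sigma$ symplectic means $f$ has a sign (say $\inf_Mf>0$). For $M\neq\T^2$ the displayed quantity above is a quadratic in $s$ whose top coefficient $\tfrac{[\sigma]}{2\pi\chi(M)}f(q)$ is everywhere positive when $M=S^2$ and everywhere negative when the genus is at least two; hence for $s$ large (i.e.\ $k$ small) it is uniformly positive, resp.\ uniformly negative, giving positive, resp.\ negative, contact type. For $M=\T^2$ one picks any \emph{closed} $\tau\in\Omega^1(S\T^2)$ with $\tau(V)=1$ (this exists since $S\T^2\simeq\T^3$) and computes $\tau(X_s)=\tau(X)+sf\geq\inf_{SM}\tau(X)+s\inf_Mf$, which is positive for $s$ large; since $\tau$ is closed, condition \emph{(a)} holds exactly. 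Your worry that ``condition \emph{(a)} is not open'' is legitimate for a perturbative scheme starting from the approximate form $\psi$, but the paper never perturbs: it writes down forms satisfying \emph{(a$'$)} (for $M\neq\T^2$) or $d\tau=0$ (for $\T^2$) on the nose, and only checks the open condition \emph{(b)} asymptotically in $s$. The moral is that the Levi-Civita connection form $\psi$, via $d\psi=-K\pi^*\mu$, already provides the correction term you were trying to build by iteration.
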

\noindent The last proposition deals with negative results on stability.
\begin{prp}\label{prp-st4}
The following statements hold true.
\begin{enumerate}
 \item If $[\sigma]=0$ and $M\neq\T^2$, $\Sigma_k$ is not of contact type, for $k<c_0(g,\sigma)$;
 \item If $[\sigma]\neq0$ and there exists $q\in M$ such that $\sigma_{q}<0$, then
 \begin{enumerate}
 \item when $M\neq\T^2$, $\Sigma_k$ is not of contact type, for $k$ low enough;
 \item when $M=\T^2$, $\Sigma_k$ does not admit a closed stabilizing form, for $k$ low enough.
 \end{enumerate}
\item If $M=S^2$, there exists an energy level associated to some $\overline g$ and some everywhere positive form $\overline\sigma$, which is not of contact type. 
\end{enumerate}
\end{prp}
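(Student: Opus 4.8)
The plan is to reduce each statement to the contact criterion of the previous proposition: deciding whether $\Sigma_k$ carries a $1$-form $\alpha$ with $d\alpha=\omega_{g,\sigma}|_{\Sigma_k}$ and $\alpha(X_{(g,\sigma)})$ of constant sign. The elementary remark I would build on is an \emph{integration trick}. Since $M\neq\T^2$ we have $\chi(M)\neq0$, so cup product with the non-zero Euler class of the circle bundle $\pi\colon\Sigma_k\to M$ is onto, hence $\pi^*\sigma|_{\Sigma_k}$ is exact; consequently $\omega_{g,\sigma}|_{\Sigma_k}$ has a global primitive $\beta_0$ (in the exact case $[\sigma]=0$ one may even take $\beta_0=(\flat^*\lambda-\pi^*\theta)|_{\Sigma_k}$ with $d\theta=\sigma$). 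If $\alpha$ were a contact form, then $\alpha-\beta_0$ would be closed, so for every periodic orbit $\gamma$ whose lift $\widehat\gamma\subset\Sigma_k$ is null-homologous one gets, for any $2$-chain $W$ with $\partial W=\widehat\gamma$,
\[
\int_{\widehat\gamma}\alpha\ =\ \int_{\widehat\gamma}\beta_0\ =\ \int_{W}\omega_{g,\sigma}\ =\ \sqrt{2k}\,\ell(\gamma)\ -\ \int_{\pi_*W}\sigma\ ,
\]
whereas $\int_{\widehat\gamma}\alpha=\int_0^{T}\alpha(X_{(g,\sigma)})\,\di t$ has the sign of $\alpha(X_{(g,\sigma)})$. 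Thus a periodic orbit (or a null-homologous family of orbits) whose right-hand side has the wrong sign rules out contact type of the corresponding kind; note that this right-hand side is a Ta\u\i manov functional.

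For (1) and (2a) I would feed the Ta\u\i manov minimizers into this scheme. Fix $k<\tau(g,\sigma)$, which in case (1) equals $c_0(g,\sigma)$ and in case (2a) is positive since $\sigma$ has a negative point. By Proposition~\ref{prp-tai} the minimizer $\Pi^k$ of $\mathcal T_k|_{\mathcal E_-(M)}$ has $\mathcal T_k(\Pi^k)=\inf_{\mathcal E_-}\mathcal T_k<0$ and $\partial\Pi^k=\{\gamma^k_i\}_i\neq\emptyset$, a family of periodic orbits with $\sum_i[\gamma^k_i]=0$ in $H_1(M;\Z)$ (Theorem~\ref{thm-tai}). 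As $\chi(M)\neq0$, the map $\pi_*\colon H_1(\Sigma_k;\R)\to H_1(M;\R)$ is injective, so $\sum_i[\widehat\gamma^k_i]=0$; choosing $W$ with $\partial W=\sum_i\widehat\gamma^k_i$, using $\partial(\pi_*W)=\partial\Pi^k$ and Stokes, the trick gives
\[
\sum_i\int_{\widehat\gamma^k_i}\alpha\ =\ \mathcal T_k(\Pi^k)\ -\ m\!\int_M\sigma
\]
for a number $m$ recording the fibrewise winding of the lifts over $\Pi^k$. In the exact case the correction vanishes; in general a bookkeeping of this winding, together with $[\sigma]\geq0$ and the orientation defining $\mathcal E_-(M)$, shows it has the favourable sign, so the sum is $\leq\mathcal T_k(\Pi^k)<0$, contradicting $\alpha(X_{(g,\sigma)})>0$. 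Hence $\Sigma_k$ is not of positive contact type for such $k$. Negative contact type is excluded by Proposition~\ref{prp-st2} (part (1) directly; part (2a) directly when $M=S^2$), and for $M$ of genus $\geq2$ by rerunning the above for the pair $(g,-\sigma)$ -- whose magnetic flow is the time-reversal of the given one, and which again has a point where the form is negative because $[\sigma]\geq0$ and $[\sigma]\neq0$ force $[\sigma]>0$. This gives (1) and (2a).

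For (3), negative contact type is already excluded for all $k>0$ by Proposition~\ref{prp-st2}(2a), so it suffices to build $(\overline g,\overline\sigma)$ with $\overline\sigma>0$ everywhere and an energy $k$ at which $\Sigma_k$ is not of positive contact type. By the trick this means exhibiting a periodic orbit $\gamma$, null-homologous in $\Sigma_k$, with $\sqrt{2k}\,\ell(\gamma)-\int_D\overline\sigma-m\int_{S^2}\overline\sigma\leq0$, where $D$ is a capping disc and $m$ the area defect from the turning of $\dot\gamma$. The construction I would attempt is a metric $\overline g$ with a long thin neck joining two caps: for a suitable $k$ the flow has a short simple closed orbit $\gamma$ encircling the neck, bounding a disc that captures a large amount of $\int_D\overline\sigma$, so the displayed quantity becomes negative. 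Proving that such a $\gamma$ exists -- say by a continuity argument in the family of closed curves of prescribed geodesic curvature $1/\sqrt{2k}$ -- and pinning down $m$ is the heart of the matter, carried out in \cite{AB15b}.

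\textbf{Main obstacle.} The delicate point is the homological bookkeeping. The lift $\widehat\gamma$ is never the velocity lift of a disc, since the fibrewise winding obstruction equals the turning number of $\dot\gamma$ and does not vanish; hence any $2$-chain filling $\widehat\gamma$ over-counts $M$ by a multiple $m$ of its area, and one must control the sign and size of the resulting $m\int_M\sigma$ term. This is vacuous precisely in the exact case, which is why (1) is clean, but it is the crux of (2a) and (3). For (3) one must moreover guarantee the existence of an orbit with the required shape, which is where the explicit choice of $(\overline g,\overline\sigma)$ enters.
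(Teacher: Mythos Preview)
Your treatment of part \textit{(1)} is essentially the paper's argument, phrased in terms of null-homologous lifts rather than invariant measures with zero rotation vector; these are equivalent and the computation $\int_{\widehat\gamma}\beta_0=s\,\mathcal T_k(\Pi)$ is exactly \eqref{act-tai}.

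There is, however, a genuine gap in your argument for \textit{(2a)}. You claim that for the Ta\u\i manov minimizer $\Pi^k\in\mathcal E_-(M)$ the correction term $m\int_M\sigma$ ``has the favourable sign''. But the correction is governed by the Euler characteristic of $\Pi^k$: the paper's identity \eqref{eq-gen} reads
\[
\frac{\mathcal S_s(\xi_{\partial\Pi})}{s}\ =\ \mathcal T_k(\Pi)\ +\ \frac{\mathfrak o(\Pi)\,\chi(\Pi)\,[\sigma]}{\chi(M)}\,,
\]
and Theorem~\ref{thm-tai} gives \emph{no} control on $\chi(\Pi^k)$. For instance, on $S^2$ a minimizer that happens to be a disc with a handle has $\chi(\Pi^k)=-1$ and the correction is $+[\sigma]/2>0$, possibly overwhelming $\mathcal T_k(\Pi^k)$. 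The paper singles out exactly this obstruction (``The problem with formula \eqref{eq-gen} is that Theorem~\ref{thm-tai} does not give any information on the Euler characteristic of $\Pi$'') and circumvents it by \emph{abandoning} the Ta\u\i manov minimizers in favour of Ginzburg's short orbits (Proposition~\ref{prp-gin}): for small $k$ one has a simple periodic orbit $\gamma^k_-$ bounding a \emph{small disc} $D^k_-$, so that $\chi(D^k_-)=1$ is known and $|\mathcal T_k(D^k_-)|=O(k)$; then $\mathcal S_s(\xi_{\partial D^k_-})$ has the sign of $-\chi(M)^{-1}$ for $k$ small, and one concludes via Lemma~\ref{lem-ct} together with Proposition~\ref{prp-st2}. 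Your time-reversal device for genus $\geq2$ inherits the same defect, since it again relies on a Ta\u\i manov minimizer of uncontrolled topology.

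You also do not address \textit{(2b)} at all. The paper's argument here is different in kind: one uses \emph{two} Ginzburg orbits $\gamma^k_\pm$ (one near a point where $f>0$, one near a point where $f<0$), observes that their lifts are freely homotopic to fibres of $S\T^2$ with opposite orientations, so the combined measure $\xi_{\partial(D^k_-\cup D^k_+)}$ has zero rotation vector; any closed stabilizing form would then pair trivially with it, contradicting $\tau_s(X_s)\neq0$.

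For \textit{(3)} the paper omits the proof and refers to \cite{Ben14a}; your sketch is reasonable in spirit but, as you note, the construction is the whole point.
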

Before embarking in the proof of such propositions, we make the following observation.
\begin{lem}
Let $k>0$ and set $s:=1/\sqrt{2k}$. Then, the flows of $\Phi^{(g,\sigma)}|_{\Sigma_k}$ and $\Phi^{(g,s\sigma)}|_{\Sigma_{1/2}}$ are conjugated up to a time reparametrization.
\end{lem}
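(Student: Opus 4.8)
The plan is to write down the conjugating diffeomorphism explicitly --- it is the fibrewise homothety --- and then to check the conjugacy directly on the twisted Euler--Lagrange equation \eqref{El-sur}. Since $L$ is purely kinetic we have $E(q,v)=\frac12|v|^2$, so $\Sigma_k=\{\,|v|=\sqrt{2k}\,\}$ and $\Sigma_{1/2}=\{\,|v|=1\,\}$. First I would introduce
\begin{equation*}
\psi_s:TM\longrightarrow TM\,,\qquad (q,v)\longmapsto(q,sv)\,,
\end{equation*}
and observe that, because $s\sqrt{2k}=1$, it restricts to a diffeomorphism $\psi_s:\Sigma_k\to\Sigma_{1/2}$. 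This is the map that will realise the conjugation, with time reparametrisation the linear rescaling $t\mapsto t/s=\sqrt{2k}\,t$.

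Next I would transfer everything to the equation of motion. By \eqref{El-sur} with $V\equiv0$, a curve $\gamma$ is a flow line of $X_{(g,\sigma)}$ with $(\gamma,\dot\gamma)\subset\Sigma_k$ if and only if $|\dot\gamma|\equiv\sqrt{2k}$ and $\nabla_{\dot\gamma}\dot\gamma=Y_\gamma(\dot\gamma)$, where $g_q(Y_q v_1,v_2)=\sigma_q(v_1,v_2)$; the feature to exploit is that $Y_q$ is $\R$-linear. Given such a $\gamma$, I would set $\beta(\tau):=\gamma(s\tau)$, so $|\dot\beta|\equiv s\sqrt{2k}=1$, and compute, via $\R$-bilinearity of $\nabla$ and the chain rule,
\begin{equation*}
\nabla_{\dot\beta}\dot\beta(\tau)\ =\ s^2\,(\nabla_{\dot\gamma}\dot\gamma)(s\tau)\ =\ s^2\,Y_{\gamma(s\tau)}\big(\dot\gamma(s\tau)\big)\ =\ s^2\,Y_{\beta(\tau)}\big(s^{-1}\dot\beta(\tau)\big)\ =\ s\,Y_{\beta(\tau)}\big(\dot\beta(\tau)\big)\,.
\end{equation*}
Since $g_q(sY_q v_1,v_2)=(s\sigma)_q(v_1,v_2)$, the endomorphism $sY$ is precisely the magnetic endomorphism of $s\sigma$, so $\beta$ is a flow line of $X_{(g,s\sigma)}$ with $(\beta,\dot\beta)\subset\Sigma_{1/2}$; running the computation backwards shows that every flow line of $X_{(g,s\sigma)}$ on $\Sigma_{1/2}$ arises this way. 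Reading this off at the level of flows gives $\psi_s\circ\Phi^{(g,\sigma)}_t=\Phi^{(g,s\sigma)}_{t/s}\circ\psi_s$ on $\Sigma_k$, which is exactly the asserted conjugacy up to the time reparametrisation $\tau_z(t)=t/s$, a $0$-fixing diffeomorphism of $\R$ that is smooth and even independent of $z$.

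I do not foresee a genuine obstacle: the whole content is that under the time rescaling $t\mapsto s\tau$ the inertial term $\nabla_{\dot\gamma}\dot\gamma$ scales by $s^2$ while the Lorentz term $Y(\dot\gamma)$ scales only by $s$ (a factor $s^2$ from the reparametrisation, a factor $s^{-1}$ from $\dot\gamma=s^{-1}\dot\beta$), and it is this quadratic-versus-linear discrepancy that is absorbed by the replacement of $\sigma$ by $s\sigma$ --- a feature special to purely kinetic systems, whose energy levels are metric sphere bundles of constant radius. As a conceptual cross-check I would note that, writing $\sigma=f\mu$, by \eqref{cur-sur} the trajectories of both $\Phi^{(g,\sigma)}|_{\Sigma_k}$ and $\Phi^{(g,s\sigma)}|_{\Sigma_{1/2}}$ project to curves in $M$ of geodesic curvature $sf$, hence have the same unparametrised images; alternatively, one can run the same argument on $(T^*M,\omega_\sigma)$, using that the fibrewise homothety $(q,p)\mapsto(q,sp)$ pulls $\omega_\sigma$ back to $s\,\omega_{\sigma/s}$ and scales $H$ by $s^2$, and transport the conclusion along the Legendre transform.
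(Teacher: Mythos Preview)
Your proof is correct and follows essentially the same route as the paper: both define the same time reparametrisation $\gamma\mapsto\gamma(s\,\cdot\,)$, which on the tangent bundle amounts to the fibrewise homothety $(q,v)\mapsto(q,sv)$, and then verify that the rescaled curve solves the equation of motion for $(g,s\sigma)$ on $\Sigma_{1/2}$. The only cosmetic difference is that the paper invokes the curvature equation \eqref{cur-sur} (both flows project to curves with $\kappa_\gamma=s f(\gamma)$), whereas you work directly with the Euler--Lagrange equation \eqref{El-sur} and track the $s^2$-versus-$s$ scaling of the inertial and Lorentz terms; you even mention the curvature argument as a cross-check, so you have in fact reproduced the paper's proof inside yours.
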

\begin{proof}
By Section \ref{sub:hom} we know that the projections to $M$ of the trajectories of $\Phi^{(g,\sigma)}|_{\Sigma_k}$ and of $\Phi^{(g,s\sigma)}|_{\Sigma_{1/2}}$ both satisfy the equation $\kappa_\gamma=s\cdot f(\gamma)$. Therefore, if
\begin{equation*}
t\ \longmapsto\ \left(\gamma(t),\frac{d\gamma}{dt}(t)\right)
\end{equation*}
is a trajectory of the former flow and we set $\gamma_s(t')\,:=\ \gamma(st')$, then
\begin{equation*}
t'\ \longmapsto\ \left(\gamma_s(t'),\frac{d\gamma_s}{dt'}(t')\right)\ =\ \left(\gamma(st'),\,s\cdot\frac{d\gamma}{dt}(st')\right)
\end{equation*}
is a trajectory of the latter flow.
\end{proof}
\noindent Therefore, given $(g,\sigma)$, instead of studying the flow $\Phi^{(g,\sigma)}$ on each energy level $\Sigma_k$, we can study the $1$-parameter family of flows $\Phi^{(g,s\sigma)}$ on $SM:=\Sigma_{1/2}$ as $s$ varies in $(0,+\infty)$. The advantage of rescaling $\sigma$ is that now we can work on a fixed three-dimensional manifold: $SM$. The tangent bundle of $SM$ has a global frame $(X,V,H)$ and corresponding dual co-frame $(\alpha,\psi,\beta)$, which we now define.

Let $\mathcal H\subset SM$ be the horizontal distribution given by the Levi-Civita connection of $g$. For every $(q,v)\in SM$, $X_{(q,v)}$ and $H_{(q,v)}$ are defined as the unique elements in $\mathcal H$ such that
\begin{equation*}
d_{(q,v)}\pi\big(X_{(q,v)}\big)\ =\ v\,,\quad\quad d_{(q,v)}\pi\big(H_{(q,v)}\big)\ =\ \imath \cdot v\,.
\end{equation*}
Analogously, $\alpha_{(q,v)}$ and $\beta_{(q,v)}$ are defined by
\begin{equation*}
\alpha_{(q,v)}(\cdot)\ =\ g_q\big(v,d_{(q,v)}\pi(\cdot)\big)\,,\quad\quad \beta_{(q,v)}(\cdot)\ =\ g_q\big(\imath\cdot v,d_{(q,v)}\pi(\cdot)\big)\,.
\end{equation*}
The vector $V$ is the generator of the rotations along the fibers $\varphi\mapsto(q,\cos\varphi\, v+\sin\varphi\,\imath \cdot v)$.
The form $\psi$ is the connection $1$-form of the Levi-Civita connection. If $W\in T_{(q,v)}SM$ and $w(t)=(\gamma(t),v(t))$ is a curve such that $w(0)=(q,v)$ and $\dot w(0)=W$, then
\begin{equation*}
\psi_{(q,v)}(W)\ =\ g_q\big(\nabla_{\dot\gamma(0)}v,\imath\cdot v\big).
\end{equation*}
Finally, we orient $SM$ using the frame $(X,V,H)$.

The proof of the following proposition giving the structural relations for the co-frame is a particular case of the identities proven in \cite{GK02}.
\begin{prp}
Let $K$ be the Gaussian curvature of $g$. We have the relations:
\begin{equation}
d\alpha\ =\ \psi\wedge\beta\,,\quad\quad d\psi\ =\ K\beta\wedge\alpha\ =\ -K\pi^*\mu\,,\quad\quad d\beta\ =\ \alpha\wedge\psi\,.
\end{equation}
\end{prp}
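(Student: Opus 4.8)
The plan is to reduce the three identities to Cartan's structure equations for surfaces by trivialising $SM$ over a coordinate chart. Fix an open set $U\subseteq M$ together with a positively oriented $g$-orthonormal frame $(e_1,e_2)$ on $U$; let $(\omega^1,\omega^2)$ be the dual coframe and let $\vartheta\in\Omega^1(U)$ be the Levi-Civita connection form, characterised by $\nabla e_1=-\vartheta\otimes e_2$ and $\nabla e_2=\vartheta\otimes e_1$. On $U$ one has $\mu=\omega^1\wedge\omega^2$ and the structure equations $d\omega^1=-\vartheta\wedge\omega^2$, $d\omega^2=\vartheta\wedge\omega^1$, $d\vartheta=K\,\omega^1\wedge\omega^2$. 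Over $U$ the circle bundle $SM$ is trivialised by $(q,\varphi)\mapsto\big(q,\cos\varphi\,e_1(q)+\sin\varphi\,e_2(q)\big)$, so that $\imath\cdot v=-\sin\varphi\,e_1+\cos\varphi\,e_2$.

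Next I would write the global coframe in these local coordinates. Straight from the definitions, $\alpha=\cos\varphi\,\pi^*\omega^1+\sin\varphi\,\pi^*\omega^2$ and $\beta=-\sin\varphi\,\pi^*\omega^1+\cos\varphi\,\pi^*\omega^2$; and a short computation with the connection form, applied to the covariant derivative of $v$ along a curve $w=(\gamma,v)$ in $SM$, gives $\nabla_{\dot\gamma}v=\big((d\varphi-\pi^*\vartheta)(\dot w)\big)\,\imath\cdot v$, whence $\psi=d\varphi-\pi^*\vartheta$. Taking a wedge product one finds $\beta\wedge\alpha=-\pi^*(\omega^1\wedge\omega^2)=-\pi^*\mu$; this already reconciles the two right-hand sides in the middle identity, so it remains to check $d\alpha=\psi\wedge\beta$, $d\beta=\alpha\wedge\psi$, and $d\psi=K\,\beta\wedge\alpha$.

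These are now routine differentiations. First, $d\psi=-\pi^*d\vartheta=-K\,\pi^*\mu=K\,\beta\wedge\alpha$. For $d\alpha$, apply the Leibniz rule and substitute $d(\pi^*\omega^i)=\pi^*(d\omega^i)$ from the structure equations; the terms carrying $d\varphi$ collect to $d\varphi\wedge\beta$ and the connection terms collect to $-\pi^*\vartheta\wedge\beta$, so $d\alpha=(d\varphi-\pi^*\vartheta)\wedge\beta=\psi\wedge\beta$. The computation of $d\beta=\alpha\wedge\psi$ is the same up to relabelling. Since $U$ was arbitrary and all the forms involved are globally defined on $SM$, the identities hold everywhere. (As a sanity check, $d^2=0$ holds on each side; e.g. $d(K\,\beta\wedge\alpha)=dK\wedge\beta\wedge\alpha=\pi^*(dK\wedge\mu)=0$ because $dK\wedge\mu$ is a $3$-form on the surface $M$.)

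There is no conceptual obstacle here; the one thing requiring care is \textbf{sign bookkeeping} — one must fix mutually compatible conventions for the sign of the connection form, for $d\vartheta=\pm K\mu$, for $\psi=d\varphi\mp\pi^*\vartheta$, and for the orientation of $SM$ induced by $(X,V,H)$ (so that $\pi^*\mu$ is $\beta\wedge\alpha$ and not $\alpha\wedge\beta$). A coordinate-free alternative, and the route underlying \cite{GK02}, is to first establish the commutator relations of the geodesic frame — $[X,V]=-H$, $[V,H]=-X$, $[X,H]=KV$ — and then read the three structure equations off from $d\theta(Y,Z)=Y(\theta(Z))-Z(\theta(Y))-\theta([Y,Z])$ evaluated on pairs of basis vectors, for $\theta\in\{\alpha,\psi,\beta\}$; this merely trades the local trivialisation for the equally standard bracket computation.
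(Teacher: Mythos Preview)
Your argument is correct. The local trivialisation via an orthonormal frame, the identification $\psi=d\varphi-\pi^*\vartheta$, and the subsequent differentiations are all sound, and your sign conventions are internally consistent with the paper's (in particular $\beta\wedge\alpha=-\pi^*\mu$ and the resulting bracket relations $[X,V]=-H$, $[V,H]=-X$, $[X,H]=KV$ match).

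As for comparison: the paper does not actually prove this proposition. It simply states that the identities are a particular case of those in \cite{GK02} and moves on. Your write-up is therefore strictly more than what the paper provides. The alternative you sketch at the end --- computing the Lie brackets of $(X,V,H)$ first and then reading off the structure equations via $d\theta(Y,Z)=-\theta([Y,Z])$ on basis pairs --- is indeed the style of argument one finds in the cited reference, so you have effectively covered both routes. Your direct Cartan-structure-equation computation has the advantage of being entirely elementary and of making the dependence on the Levi-Civita connection form transparent; the bracket route is slightly cleaner once one already knows the commutators, but establishing those commutators from scratch is essentially the same work you did.
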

\noindent Using the frame $(X,V,H)$ we can write
\begin{equation*}
X_s\,:=\ X_{(g,s\sigma)}\ =\ X\,+\,sfV\,,\quad\quad \omega_s\,:=\ \omega_{g,s\sigma}|_{SM}\ =\ d\alpha\,-\,s\pi^*\sigma\,.
\end{equation*}
We also use the notation $\Phi^s$ for the flow of $X_s$ on $SM$.

\subsection{Stability of the homogeneous systems}

Let us start by describing the stability properties of the homogeneous examples introduced in Section \ref{sub:hom}.
\subsubsection{The two-sphere}
In this case we have $\sigma=\mu=K\mu$. Hence,
\begin{equation*}
\omega_s\ =\ d\alpha-s\pi^*\sigma\ =\ d(\alpha+s\psi)\quad \mbox{and}\quad (\alpha+s\psi)(X_s)\ =\ (\alpha+s\psi)(X+sV)\ =\ 1\,+\,s^2\,.
\end{equation*}
Every energy level is of positive contact type.
\subsubsection{The two-torus}
In this case we compute
\begin{equation*}
d\psi\ =\ K\mu\ =\ 0\quad \mbox{and}\quad \psi(X_s)\ =\ \psi(X+sV)\ =\ s\,.
\end{equation*}
Every energy level is stable.
\subsubsection{The hyperbolic surface}
In this case we have $\sigma=\mu=-K\mu$. Hence,
\begin{equation*}
\omega_s\ =\ d\alpha-s\pi^*\sigma\ =\ d(\alpha-s\psi)\quad \mbox{and}\quad (\alpha-s\psi)(X_s)\ =\ (\alpha-s\psi)(X+sV)\ =\ 1\,-\,s^2\,.
\end{equation*}
Every energy level $\Sigma_k$ with $k>\frac{1}{2}$ is of positive contact type. Every energy level $\Sigma_k$ with $k<\frac{1}{2}$ is of negative contact type.
As follows from Proposition \ref{prp-st2}, $c_h(g,\sigma)=1/2$ and $\Sigma_{1/2}$ is not stable.
\bigskip

\subsection{Invariant measures on $SM$}
A fundamental ingredient in the proof of the four propositions is the notion of invariant measure for a flow. In this subsection, we recall this notion and we observe that twisted systems of purely kinetic type always possess a natural invariant measure called the \textit{Liouville measure}.

\begin{dfn}
A Borel measure $\xi$ on $SM$ is $\Phi^{s}\mathsf{-invariant}$, if $\xi(\Phi^{s}_{t}(A))=\xi(A)$, for every $t\in\R$ and every Borel set $A$. This is equivalent to asking
\begin{equation}
\int_{SM}dh(X_s)\,\xi\ =\ 0\,,\quad\quad\forall\,h\in C^\infty(SM,\R)\,.
\end{equation}
The $\mathsf{rotation\ vector}$ of $\xi$ is $\rho(\xi)\in H_1(SM,\R)$ defined by duality on $[\tau]\in H^1(SM,\R)$:
\begin{equation}
<[\tau],\rho(\xi)>\ =\ \int_{SM}\tau(X_s)\,\xi\,,
\end{equation}
where $\tau\in\Omega^1(SM)$ is any closed form representing the class $[\tau]$.
\end{dfn}
Since $X_{s}$ is a section of $\ker\omega_s$ and $\omega_s$ is nowhere vanishing, we can find a unique volume form $\Omega_s$ such that $\imath_{X_{s}}\Omega_s=\omega_s$. We can write $\Omega_s=\tau_s\wedge\omega_s$, where $\tau_s$ is any $1$-form such that $\tau_s(X_{s})=1$. We easily see that $\alpha(X+sfV)=1+0$. Hence, $\Omega_s=\alpha\wedge\omega_s=\alpha\wedge d\alpha$. Notice, indeed, that $\alpha\wedge\pi^*\sigma=0$ since it is annihilated by $V$.
\begin{dfn}
The $\mathsf{Liouville\ measure\ \xi_{SM}}$ on $SM$ is the Borel measure defined by integration with the differential form $\alpha\wedge d\alpha$. It is an invariant measure for $\Phi^{s}$ for every $s>0$.
\end{dfn}
In order to compute the rotation vector of $\xi_{SM}$, we need a lemma which tells us when $\omega_s$ is exact. The easy proof is left to the reader.
\begin{lem}\label{lem-exa}
If $\sigma$ is exact, then $\pi^*\sigma$ is exact and we have an injection
\begin{equation}
\begin{aligned}
\mbox{Primitives of }\ \sigma&\ \xrightarrow{\quad\quad}\ \mbox{Primitives of }\ \omega_s\\
\zeta&\ \xmapsto{\quad\quad}\ \alpha\,-\,s\pi^*\zeta\,.
\end{aligned}
\end{equation}
If $M\neq\T^2$, then $\pi^*\sigma$ is exact and we have an injection
\begin{equation}
\begin{aligned}
\mbox{Primitives of }\ \sigma\,-\,\frac{[\sigma]}{2\pi\chi(M)}K\mu&\ \xrightarrow{\quad\quad}\ \mbox{Primitives of }\ \omega_s\\
\zeta&\ \xmapsto{\quad\quad}\ \alpha\,-\,s\pi^*\zeta\,+\,s\frac{[\sigma]}{2\pi\chi(M)}\psi\,.
\end{aligned}
\end{equation}
If $M=\T^2$ and $\sigma$ is non-exact, then $\omega_s$ is non-exact.
\end{lem}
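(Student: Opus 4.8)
The plan is to prove the three claims of Lemma \ref{lem-exa} essentially by direct computation, using the structural relations $d\alpha=\psi\wedge\beta$, $d\psi=-K\pi^*\mu$, $d\beta=\alpha\wedge\psi$ from the preceding proposition, together with the fact that $\pi^*$ is injective on $H^2$ and that $H^2(M;\R)\simeq\R$ is generated by any area form (in particular by $\mu$, and by $-K\mu/(2\pi\chi(M))$ after normalizing total integral via Gauss--Bonnet). First I would settle the easy implication: if $\sigma=d\zeta$ for $\zeta\in\Omega^1(M)$, then $\pi^*\sigma=d(\pi^*\zeta)$, and since $\omega_s=d\alpha-s\pi^*\sigma$ we get at once $\omega_s=d(\alpha-s\pi^*\zeta)$; injectivity of $\zeta\mapsto\alpha-s\pi^*\zeta$ is clear because $\pi^*$ is injective on $1$-forms on the fibers-trivial part (two primitives of $\sigma$ differing would force $\pi^*$ of their difference to vanish). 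This handles the first injection.

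Next, for $M\neq\T^2$ I would use that $\chi(M)\neq0$, so by Gauss--Bonnet $\int_M K\mu=2\pi\chi(M)\neq0$; hence $[K\mu]$ spans $H^2(M;\R)$ and the corrected form $\sigma':=\sigma-\frac{[\sigma]}{2\pi\chi(M)}K\mu$ is exact, because it has cohomology class $[\sigma]-\frac{[\sigma]}{2\pi\chi(M)}[K\mu]=[\sigma]-[\sigma]=0$ (here $[\sigma]$ is the real number identifying the class, and $[K\mu]$ corresponds to $2\pi\chi(M)$). Writing $\sigma'=d\zeta$, I then compute, using $d\psi=-K\pi^*\mu$ and $\pi^*\mu=\pi^*\mu$,
\begin{equation*}
d\Big(\alpha-s\pi^*\zeta+s\tfrac{[\sigma]}{2\pi\chi(M)}\psi\Big)=d\alpha-s\pi^*(d\zeta)-s\tfrac{[\sigma]}{2\pi\chi(M)}K\pi^*\mu=d\alpha-s\pi^*\sigma'-s\tfrac{[\sigma]}{2\pi\chi(M)}\pi^*(K\mu)=d\alpha-s\pi^*\sigma=\omega_s,
\end{equation*}
which is the second injection; injectivity again follows from injectivity of $\pi^*$ on forms on $M$.

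Finally, for $M=\T^2$ with $\sigma$ non-exact I would argue by contradiction on cohomology classes in $H^2(SM;\R)$. Since $\chi(\T^2)=0$, the class $[\pi^*\sigma]\in H^2(SM;\R)$ is nonzero: indeed $\pi:SM\to\T^2$ admits a section (the unit tangent bundle of the torus is trivial, $SM\cong\T^2\times S^1$), so $\pi^*$ is injective on $H^2$, and $[\sigma]\neq0$ gives $[\pi^*\sigma]\neq0$. On the other hand $[d\alpha]=0$ in $H^2(SM;\R)$, so $[\omega_s]=[d\alpha]-s[\pi^*\sigma]=-s[\pi^*\sigma]\neq0$, whence $\omega_s$ is not exact. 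The main obstacle is purely bookkeeping: making sure the identification of $[\sigma]$ with a real number and of $[K\mu]$ with $2\pi\chi(M)$ under $H^2(M;\R)\simeq\R$ is used consistently, and that the triviality of $S\T^2$ (equivalently, existence of a section of $\pi$) is invoked correctly to get injectivity of $\pi^*$ on second cohomology in the torus case; none of this is deep, which is why the proof is left to the reader.
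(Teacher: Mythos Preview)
Your proof is correct and is exactly the straightforward verification the paper has in mind; indeed, the paper states that ``the easy proof is left to the reader'' and gives no argument of its own. One minor remark: your phrasing for injectivity (``$\pi^*$ is injective on $1$-forms on the fibers-trivial part'') is a bit muddled---the clean statement is simply that $\pi:SM\to M$ is a surjective submersion, hence $\pi^*$ is injective on all differential forms, so $\alpha-s\pi^*\zeta_1=\alpha-s\pi^*\zeta_2$ forces $\zeta_1=\zeta_2$.
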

\noindent We can now state a proposition concerning $\rho(\xi_{SM})$.
\begin{prp}\label{prp-rot}
If $[\sigma]\neq0$ and $M=\T^2$, then there holds $\rho(\xi_{SM})=s[\sigma]\cdot[S_qM]$, where $[S_qM]\in H_1(SM,\Z)$ is the class of a fiber of $SM\rightarrow M$ oriented counter-clockwise. Otherwise, $\rho(\xi_{SM})=0$.
\end{prp}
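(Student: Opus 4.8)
The plan is to reduce the computation of $\rho(\xi_{SM})$ to a purely cohomological statement about the class $[\omega_s]\in H^2(SM;\R)$ and then feed this into the Gysin sequence of the circle bundle $\pi\colon SM\to M$.

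\smallskip

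First I would use that $SM$ is three\nobreakdash-dimensional. For any closed $\tau\in\Omega^1(SM)$ and any top form $\Omega$ on $SM$ one has the pointwise identity $\tau\wedge\iota_{X_s}\Omega=\tau(X_s)\,\Omega$ (apply $\iota_{X_s}$ to the vanishing $4$-form $\tau\wedge\Omega$). Taking $\Omega=\alpha\wedge d\alpha$ and recalling $\iota_{X_s}(\alpha\wedge d\alpha)=\iota_{X_s}\Omega_s=\omega_s$, we get $\tau(X_s)\,\xi_{SM}=\tau\wedge\omega_s$, hence
\begin{equation*}
\langle[\tau],\rho(\xi_{SM})\rangle\ =\ \int_{SM}\tau(X_s)\,\xi_{SM}\ =\ \int_{SM}\tau\wedge\omega_s\,.
\end{equation*}
Since $\alpha$ is a globally defined $1$-form on $SM$, $d\alpha$ is exact; because $\tau$ is closed, $\int_{SM}\tau\wedge d\alpha=-\int_{SM}d(\tau\wedge\alpha)=0$, so $\int_{SM}\tau\wedge\omega_s=-s\int_{SM}\tau\wedge\pi^*\sigma$. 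Equivalently, $[\omega_s]=-s\,\pi^*[\sigma]$ and $\rho(\xi_{SM})$ is the Poincar\'e dual in $SM$ of $-s\,\pi^*[\sigma]$. Thus everything is controlled by the class $\pi^*[\sigma]$.

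\smallskip

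Next I would split into cases via the Gysin sequence of $\pi\colon SM\to M$, whose Euler class in $H^2(M;\R)\simeq\R$ is $\chi(M)$. If $M\neq\T^2$, then $\chi(M)\neq0$, so cup product with the Euler class $H^0(M;\R)\to H^2(M;\R)$ is an isomorphism and $\pi^*\colon H^2(M;\R)\to H^2(SM;\R)$ vanishes; hence $\pi^*\sigma$ is exact (this is exactly Lemma \ref{lem-exa}, which makes $\omega_s$ exact) and $\rho(\xi_{SM})=0$. The same holds if $M=\T^2$ and $[\sigma]=0$, since then $\sigma$ is exact. Finally, if $M=\T^2$ and $[\sigma]\neq0$, the Euler number is $0$, so $\pi^*$ is injective on $H^2$ and $\pi^*[\sigma]\neq0$; it remains to identify its Poincar\'e dual. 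Decomposing $\tau=\tau(X)\,\alpha+\tau(V)\,\psi+\tau(H)\,\beta$ in the global coframe and using $\pi^*\sigma=(f\circ\pi)\,\pi^*\mu=(f\circ\pi)\,\alpha\wedge\beta$, one finds $\tau\wedge\pi^*\sigma=-(f\circ\pi)\,\tau(V)\,\alpha\wedge\psi\wedge\beta=-(f\circ\pi)\,\tau(V)\,\xi_{SM}$. Integrating first along the fibers of $\pi$ — along which the closed form $\tau$ has constant integral $\int_{S_qM}\tau=\langle[\tau],[S_qM]\rangle$ — gives $\int_{SM}\tau\wedge\pi^*\sigma=-\langle[\tau],[S_qM]\rangle\int_M f\mu=-[\sigma]\,\langle[\tau],[S_qM]\rangle$. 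Combined with the first step, $\langle[\tau],\rho(\xi_{SM})\rangle=s[\sigma]\,\langle[\tau],[S_qM]\rangle$ for every closed $\tau$, i.e.\ $\rho(\xi_{SM})=s[\sigma]\cdot[S_qM]$.

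\smallskip

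The one delicate point is the orientation bookkeeping in the fiber integration, namely getting the sign right with respect to the convention that $[S_qM]$ is oriented counter\nobreakdash-clockwise (by $V$) while $SM$ carries the orientation of the frame $(X,V,H)$. I would settle it by running the whole computation once on the flat torus, where $\alpha=\cos\varphi\,dx_1+\sin\varphi\,dx_2$, $\psi=d\varphi$, $X_s=\cos\varphi\,\partial_{x_1}+\sin\varphi\,\partial_{x_2}+sf\,\partial_\varphi$, and $\alpha\wedge d\alpha=-dx_1\wedge dx_2\wedge d\varphi$, so that $\int_{SM}h\,\xi_{SM}=\int h\,dx_1\,dx_2\,d\varphi$. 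Testing against the harmonic basis $\tau\in\{dx_1,dx_2,d\varphi\}$ of $H^1(SM;\R)$ yields $\langle[dx_i],\rho(\xi_{SM})\rangle=0$ and $\langle[d\varphi],\rho(\xi_{SM})\rangle=2\pi s[\sigma]$, which matches $s[\sigma][S_qM]$ because $\langle[d\varphi],[S_qM]\rangle=2\pi$. Since all quantities in the claimed identity depend only on cohomology classes, fixing the sign in this model case suffices for an arbitrary metric on $\T^2$.
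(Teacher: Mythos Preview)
Your proof is correct and follows essentially the same route as the paper: reduce $\langle[\tau],\rho(\xi_{SM})\rangle$ to $\int_{SM}\tau\wedge\omega_s$ via the identity $\tau(X_s)\,\alpha\wedge d\alpha=\tau\wedge\omega_s$, discard the exact part $d\alpha$, and then treat $\int_{SM}\tau\wedge\pi^*\sigma$ by fibre integration on $\T^2$ and by exactness of $\pi^*\sigma$ otherwise. Your Gysin-sequence phrasing of the case $M\neq\T^2$ is just a cohomological restatement of Lemma~\ref{lem-exa}, and your explicit flat-torus check is a welcome addition that pins down the sign unambiguously (indeed, the paper's intermediate signs cancel but are individually off).
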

\begin{proof}
Let $[\tau]\in H^1(SM;\R)$. We notice that
\begin{equation*}
\tau(X_{s})\,\alpha\,\wedge\, d\alpha\ =\ \imath_{X_s}\Big(\tau\,\wedge\,\alpha\,\wedge\, d\alpha\Big)\ +\ \tau\,\wedge\,\imath_{X_s}\big(\alpha\,\wedge\, d\alpha\big)\ =\ 0\ +\ \tau\,\wedge\,\omega_s\,.
\end{equation*}
Therefore,
\begin{equation*}
<[\tau],\rho(\xi_{SM})>\ =\ \int_{SM}\tau\,\wedge\,\omega_s\ =\ s\int_{SM}\tau\,\wedge\,\pi^*\sigma\,.
\end{equation*}
If $M=\T^2$, then $S\T^2\simeq S^1\times \T^2$ and we can use Fubini's theorem to integrate separately in the vertical directions and in the horizontal direction. Observe that since $\tau$ is closed, the integral over a fiber $S_q\T^2$ does not depend on $q$. Thus we find
\begin{equation*}
 \int_{S\T^2}\tau\,\wedge\,\pi^*\sigma\ =\ <[\tau],[S_q\T^2]>\cdot\,[\sigma]\,.
\end{equation*}
and the proposition is proven for the $2$-torus. When $M\neq\T^2$, $\pi^*\sigma$ is exact and, therefore,
$\int_{SM}\tau\wedge\pi^*\sigma=0$. The proposition is proven also in this case.
\end{proof}
\noindent We now proceed to the proofs of the four propositions.
\bigskip

\subsection{Proof of Proposition \ref{prp-st1}}
If $M=\T^2$ and $[\sigma]\neq0$, then $\omega_s$ is not exact by Lemma \ref{lem-exa}. In particular, $SM$ cannot be of contact type. This proves the first statement of the proposition. Now let $\tau_s\in\Omega^1(SM)$ be a stabilizing form for $\omega_s$. Since $\ker(d\tau_s)\supset \ker\omega_s$, there exists a function $\rho_s:SM\rightarrow\R$ such that $d\tau_s=\rho_s\omega_s$. Taking the exterior differential in this equation, we get $0=d\rho_s\wedge\omega_s$. Plugging in the vector field $X_{s}$ we get $0=d\rho_s(X_{s})\omega_s$. Since $\omega_s$ is nowhere zero, we conclude that $d\rho_s(X_{s})=0$. Namely, $\rho_s$ is a first integral for the flow. By assumption, $\rho_s$ is equal to a constant. If $\rho_s=0$, then $\tau_s$ is closed, if $\rho_s\neq0$, then $\tau_s$ is a contact form. Suppose the first alternative holds. Since $\tau_s(X_s)\neq0$ everywhere, we have
\begin{equation*}
0\ \neq\ \int_{SM}\tau_s(X_s)\xi_{SM}\ =\ <[\tau_s],\rho(\xi_{SM})>\,.
\end{equation*}
By Proposition \ref{prp-rot}, this can only happen if $M=\T^2$ and $<[\tau_s],[S_q\T^2]>\neq0$, which is what we had to prove.
\bigskip

\subsection{Proof of Proposition \ref{prp-st2}}
The proof of the second proposition is based on the fact that when $\omega_s$ is exact we can associate a number to every invariant measure with zero rotation vector.
\begin{dfn}
Suppose $\omega_s$ is exact and that $\xi$ is a $\Phi^s$-invariant measure with $\rho(\xi)=0$. We define the $\mathsf{action}$ of $\xi$ as the number
\begin{equation}\label{eq-act}
\mathcal S_s(\xi)\,:=\ \int_{SM}\tau_s(X_s)\,\xi\,,
\end{equation}
where $\tau_s$ is any primitive for $\omega_s$. Such number does not depend on $\tau_s$ since $\rho(\xi)=0$.
\end{dfn}
\noindent The action of invariant measures gives an obstruction to being of contact type.
\begin{lem}\label{lem-ct}
Suppose $\omega_s$ is exact and that $\xi$ is a non-zero $\Phi^s$-invariant measure with $\rho(\xi)=0$. If $\mathcal S_s(\xi)\leq0$, then $SM$ cannot be of positive contact type. If $\mathcal S_s(\xi)\geq0$, then $SM$ cannot be of negative contact type.
\end{lem}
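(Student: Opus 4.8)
The whole point is that the action $\mathcal S_s(\xi)$ can be computed with \emph{any} primitive of $\omega_s$ once we know $\rho(\xi)=0$, and a positive (resp.\ negative) contact form is a very special such primitive. So I would argue by contradiction. Suppose first that $SM$ is of positive contact type, with contact form $\tau_s\in\Omega^1(SM)$: by the criterion quoted above this means $d\tau_s=\omega_s$ and $\tau_s(X_s)>0$ everywhere on $SM$. In particular $\tau_s$ is a primitive of $\omega_s$, so it is an admissible choice in the definition \eqref{eq-act}, and hence $\mathcal S_s(\xi)=\int_{SM}\tau_s(X_s)\,\xi$.

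Next I would use compactness. Since $SM$ is a compact manifold and $z\mapsto\tau_s(X_s)(z)$ is a continuous, everywhere positive function, there is a constant $c>0$ with $\tau_s(X_s)\geq c$ on all of $SM$. Because $\xi$ is a non-zero (non-negative) Borel measure, $\xi(SM)>0$, and therefore
\begin{equation*}
\mathcal S_s(\xi)\ =\ \int_{SM}\tau_s(X_s)\,\xi\ \geq\ c\cdot\xi(SM)\ >\ 0\,,
\end{equation*}
which contradicts the hypothesis $\mathcal S_s(\xi)\leq0$. This settles the first claim.

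The second claim is symmetric: if $SM$ were of negative contact type with contact form $\tau_s$, then $d\tau_s=\omega_s$ still holds, so $\tau_s$ again computes $\mathcal S_s(\xi)$, while now $\tau_s(X_s)<0$ everywhere; the same compactness argument gives a constant $c>0$ with $\tau_s(X_s)\leq-c$, whence $\mathcal S_s(\xi)\leq-c\cdot\xi(SM)<0$, contradicting $\mathcal S_s(\xi)\geq0$. (Alternatively one may simply apply the first part to $-\tau_s$, noting that $-\tau_s$ is a positive contact form and that it is still a primitive of $\omega_s$ only up to sign, which is why it is cleaner to repeat the estimate directly.)

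There is essentially no serious obstacle here; the only points requiring a word of care are that the contact condition (a$'$) is precisely the statement that $\tau_s$ is a primitive of $\omega_s$ — so that the primitive-independence of $\mathcal S_s(\xi)$ guaranteed by $\rho(\xi)=0$ lets us evaluate the action on it — and that the positivity (resp.\ negativity) of $\tau_s(X_s)$ is uniform by compactness of $SM$, which together with $\xi\neq0$ forces the strict sign of $\mathcal S_s(\xi)$.
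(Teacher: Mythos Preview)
Your proof is correct and follows exactly the same approach as the paper: use the contact form itself as the primitive in the definition of $\mathcal S_s(\xi)$, then bound the integral via $\inf_{SM}\tau_s(X_s)>0$ (by compactness) times $\xi(SM)>0$. The paper's proof is terser but identical in content, and it likewise handles the negative case by symmetry.
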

\begin{proof}
If $SM$ is of positive contact type, there exists $\tau_s$ such that $d\tau_s=\omega_s$ and $\tau_s(X_s)>0$. Therefore,
\begin{equation*}
\mathcal S_s(\xi)\ =\ \int_{SM}\tau_s(X_s)\,\xi\ \geq\ \inf_{SM}\tau_s(X_s)\cdot\xi(SM)\ >\ 0\,.
\end{equation*}
For the case of negative contact type, we argue in the same way.
\end{proof}
Let us now compute the action of the Liouville measure.
\begin{prp}\label{prp-act}
If $\sigma$ is exact, then
\begin{equation}\label{act-lio1}
\mathcal S_s(\xi_{SM})\ =\ \xi_{SM}(SM)\ =\ 2\pi[\mu]\,. 
\end{equation}
If $M\neq\T^2$, then
\begin{equation}\label{act-lio2}
\mathcal S_s(\xi_{SM})\ =\ \xi_{SM}(SM)\ +\ s^2\frac{[\sigma]^2}{\chi(M)}\,. 
\end{equation}
\end{prp}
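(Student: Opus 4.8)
The plan is to evaluate the defining integral of $\mathcal{S}_s(\xi_{SM})$ by the same bookkeeping with the structural co-frame $(\alpha,\psi,\beta)$ that was used in the proof of Proposition~\ref{prp-rot}. First I would record the identity $\tau_s(X_s)\,\alpha\wedge d\alpha=\tau_s\wedge\omega_s$, valid for any $\tau_s\in\Omega^1(SM)$: it follows by applying $\imath_{X_s}$ to the $4$-form $\tau_s\wedge\alpha\wedge d\alpha$, which vanishes since $SM$ is $3$-dimensional, and using $\imath_{X_s}(\alpha\wedge d\alpha)=\omega_s$. Taking $\tau_s$ to be a primitive of $\omega_s$ — which exists by Lemma~\ref{lem-exa} in both cases — this gives $\mathcal{S}_s(\xi_{SM})=\int_{SM}\tau_s\wedge\omega_s$, the value being independent of the chosen primitive because $\rho(\xi_{SM})=0$ in both cases by Proposition~\ref{prp-rot}. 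I would also record, as a consequence of $d\psi=K\beta\wedge\alpha=-K\pi^*\mu$, the identity $\alpha\wedge\beta=\pi^*\mu$, so that $\alpha\wedge d\alpha=\alpha\wedge\psi\wedge\beta$ is the volume form orienting $SM$; since $SM\to M$ is an $S^1$-bundle on whose fibres $\psi$ restricts to the angular form of total mass $2\pi$, Fubini yields $\int_{SM}(g\circ\pi)\,\xi_{SM}=2\pi\int_M g\,\mu$ for every $g\in C^\infty(M)$, in particular $\xi_{SM}(SM)=2\pi[\mu]$.

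When $\sigma$ is exact I would take $\zeta$ with $d\zeta=\sigma$ and the primitive $\tau_s=\alpha-s\pi^*\zeta$ of Lemma~\ref{lem-exa}, and expand $\tau_s\wedge\omega_s=(\alpha-s\pi^*\zeta)\wedge(d\alpha-s\pi^*\sigma)$. Three of the four terms drop out: $\alpha\wedge\pi^*\sigma=0$ because it is annihilated by the nowhere-vanishing vertical field $V$; $\pi^*\zeta\wedge\pi^*\sigma=\pi^*(\zeta\wedge\sigma)=0$ since it is pulled back from the surface $M$; and $\int_{SM}\pi^*\zeta\wedge d\alpha=\int_{SM}\pi^*(d\zeta)\wedge\alpha=0$ by Stokes followed by the same annihilation by $V$. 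Hence $\mathcal{S}_s(\xi_{SM})=\int_{SM}\alpha\wedge d\alpha=2\pi[\mu]=\xi_{SM}(SM)$, which is \eqref{act-lio1}.

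When $M\neq\T^2$, Gauss--Bonnet ($\int_M K\mu=2\pi\chi(M)$) shows that $\sigma-\tfrac{[\sigma]}{2\pi\chi(M)}K\mu$ has zero integral, hence is exact, so Lemma~\ref{lem-exa} supplies the primitive $\tau_s=\alpha-s\pi^*\zeta+s\tfrac{[\sigma]}{2\pi\chi(M)}\psi$. Expanding $\tau_s\wedge\omega_s$, every cross term built only from $\alpha$, $\pi^*\zeta$, $d\alpha$, $\pi^*\sigma$ integrates to zero exactly as in the previous paragraph, while $\psi\wedge d\alpha=\psi\wedge\psi\wedge\beta=0$; the only surviving contributions are $\alpha\wedge d\alpha$ and $-s^2\tfrac{[\sigma]}{2\pi\chi(M)}\,\psi\wedge\pi^*\sigma$. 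Writing $\sigma=f\mu$ and using $\pi^*\mu=\alpha\wedge\beta$ gives $\psi\wedge\pi^*\sigma=-(f\circ\pi)\,\alpha\wedge d\alpha$, so $\int_{SM}\psi\wedge\pi^*\sigma=-\int_{SM}(f\circ\pi)\,\xi_{SM}=-2\pi[\sigma]$. Collecting the two surviving pieces, $\mathcal{S}_s(\xi_{SM})=2\pi[\mu]+s^2\tfrac{[\sigma]^2}{\chi(M)}=\xi_{SM}(SM)+s^2\tfrac{[\sigma]^2}{\chi(M)}$, which is \eqref{act-lio2}.

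The only real difficulty is the bookkeeping: keeping the orientation conventions straight (the sign in $\int_{SM}\psi\wedge\pi^*\sigma$, the identification $\pi^*\mu=\alpha\wedge\beta$, and that $\alpha\wedge d\alpha$ is the positive volume form), and checking that each cross term which should vanish does so — either identically, because it is killed by $V$, or only after an application of Stokes. No input beyond Lemma~\ref{lem-exa}, Proposition~\ref{prp-rot}, the structural relations for $(\alpha,\psi,\beta)$, and Gauss--Bonnet is needed.
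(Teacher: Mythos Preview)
Your proof is correct and complete. The route, however, differs from the paper's in a noteworthy way. The paper evaluates $\tau_s(X_s)$ pointwise as a function on $SM$ and then integrates: in the exact case it gets $(\alpha-s\pi^*\zeta)(X_s)_{(q,v)}=1-s\zeta_q(v)$ and kills the second term by the flip symmetry $I(q,v)=(q,-v)$, under which $\xi_{SM}$ is invariant while $\zeta_q(v)$ is odd; in the non-exact case it computes $\int_{SM}(f\circ\pi)\,\alpha\wedge d\alpha$ directly via a partition of unity and local trivialisations of $SM$. You instead pass through the identity $\tau_s(X_s)\,\alpha\wedge d\alpha=\tau_s\wedge\omega_s$ and dispose of the unwanted terms purely algebraically (annihilation by $V$, degree count, Stokes), so that the vanishing of the $\zeta$-contribution becomes a cohomological fact rather than a symmetry argument. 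Your treatment of the $\psi\wedge\pi^*\sigma$ term via $\pi^*\mu=\alpha\wedge\beta$ and the fibrewise Fubini identity is also cleaner than the paper's partition-of-unity computation, though it encodes the same content. The trade-off is that the paper's pointwise formula for $\tau_s(X_s)$ is reused verbatim in the proof of Proposition~\ref{prp-st3} to test the contact condition, so computing it explicitly is not wasted effort there.
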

\begin{proof}
If $\sigma=d\zeta$, then $\alpha-s\pi^*\zeta$ is a primitive of $\omega_s$ by Lemma \ref{lem-exa} and we have
\begin{equation}\label{eq-funex}
(\alpha-s\pi^*\zeta)(X_s)_{(q,v)}\ =\ 1\ -\ s\zeta_q(v)\,,\quad\forall\,(q,v)\in SM\,.
\end{equation}
Consider the \textit{flip} $I:SM\rightarrow SM$ given by $I(q,v):=(q,-v)$. We see that
\begin{equation*}
(I^*\alpha)_{(q,v)}\ =\ \alpha_{I(q,v)}dI\ =\ g_q(-v,d\pi dI\cdot)\ =\ \alpha_{I(q,v)}\,.
\end{equation*}
Hence $\xi_{SM}$ is $I$-invariant. However, $\zeta\circ I(q,v)=-\zeta(q,v)$. Therefore,
\begin{equation}\label{eq-ex0}
\int_{SM}\zeta\,\xi_{SM}\ =\ 0
\end{equation}
and from the definition of action given in \eqref{eq-act}, we see that \eqref{act-lio1} is satisfied. To prove the second identity, we consider a primitive $\alpha-s\pi^*\zeta+s\frac{[\sigma]}{2\pi\chi(M)}\psi$ for $\omega_s$ as prescribed by Lemma \ref{lem-exa}. We compute
\begin{equation}
\left(\alpha\ -\ s\pi^*\zeta\ +\ s\frac{[\sigma]}{2\pi\chi(M)}\psi\right)(X_s)_{(q,v)}\ =\ 1\ -\ s\zeta_q(v)\ +\ s^2\frac{[\sigma]}{2\pi\chi(M)}f(q)\,.
\end{equation}
Thus, we need to estimate the integral of $f\circ\pi$ on $SM$. Let $U_i$ be an open cover of $M$ such that $SU_i\simeq S^1\times U_i$ and let $a_i$ be a partition of unity subordinated to it. We have
\begin{align*}
\int_{SM}f(q)\,\alpha\wedge d\alpha\ =\ \int_{SM}f(q)\,\alpha\wedge\psi\wedge\beta\ &=\ -\int_{SM}f(q)\,\psi\wedge\pi^*\mu\\
&=\ -\sum_i\int_{SU_i}a_i(q)\,\psi\wedge\pi^*\sigma\\
&=\ -\sum_i\int_{S^1\times U_i}a_i(q)\,(-d\varphi\wedge\pi^*\sigma)\\
&=\ \sum_i\int_{U_i}a_i(q)\,\sigma\int_{S^1}d\varphi\\
&=\ 2\pi\sum_i\int_{U_i}a_i(q)\,\sigma\\
&=\ 2\pi[\sigma]\,,
\end{align*}
where $\varphi$ is an angular coordinate on $S_qU_i$ going in the clockwise direction (hence the presence of an additional minus sign in the third line). Putting this computation together with \eqref{eq-ex0}, we get the desired identity.
\end{proof}
\noindent Proposition \ref{prp-st2} now follows from Lemma \ref{lem-ct} and Proposition \ref{prp-act} after defining
\begin{equation}
c_h(g,\sigma)\,:=\ -\,\frac{[\sigma]^2}{4\pi\chi(M)[\mu]}\,,\quad\mbox{when $M$ has genus higher than one}\,.
\end{equation}
\begin{rmk}
We have seen in the homogeneous example above that $c_h(g,\sigma)=c(g,\sigma)$. The relation between $c_h$ and the Ma\~n\'e critical value was studied in general by G.\ Paternain in \cite{Pat09}. There the author proves that $c_h(g,\sigma)\leq c(g,\sigma)$ and that $c_h(g,\sigma)=c(g,\sigma)$ if and only if $g$ is a metric of constant curvature and $\sigma$ is a multiple of the area form. 
\end{rmk}

\bigskip

\subsection{Proof of Proposition \ref{prp-st3}}
Suppose that $\sigma$ is exact and let us consider a primitive $\alpha-s\pi^*\zeta$ given by Lemma \ref{lem-exa}. We have
\begin{equation*}
(\alpha\ -\ s\pi^*\zeta)(X_s)_{(q,v)}\ =\ 1\ -\ s\zeta_q(v)\ \geq\ 1\ -\ s\sup_{M}|\zeta|\,.
\end{equation*}
Requiring that the right hand-side is positive is equivalent to saying that
\begin{equation*}
k\ =\ \frac{1}{2s^2}\ >\ \sup_M\frac{1}{2}|\zeta|^2\,.
\end{equation*}
Since this holds for every $\zeta$ which is a primitive for $\sigma$, we have that the last inequality is equivalent to $k>c_0(g,\sigma)$. 
Contreras, Macarini and G.\ Paternain also found in \cite{CMP04} examples of exact systems on $\T^2$, which are of contact type for $k=c_0(g,\sigma)$ (see also \cite[Section 4.1.1]{Ben14}). We will not discuss these examples here and we refer the reader to the cited literature for more details.

Let us now deal with the non-exact case. If $M\neq \T^2$, then we consider a primitive of the form $\alpha-s\pi^*\zeta+s\frac{[\sigma]}{2\pi\chi(M)}\psi$ and we compute
\begin{equation}\label{quantity}
\Big(\alpha\ -\ s\pi^*\zeta\ +\ s\frac{[\sigma]}{2\pi\chi(M)}\psi\Big)(X_s)_{(q,v)}\ =\ 1\ -\ s\zeta_q(v)\ +\ s^2\frac{[\sigma]}{2\pi\chi(M)}f(q)\,.
\end{equation}
We can give the estimate from below 
\begin{equation*}
1\ -\ s\zeta_q(v)\ +\ s^2\frac{[\sigma]}{2\pi\chi(M)}f(q)\ \geq\ 1\ -\ s\sup_M|\zeta|-s^2\left|\frac{[\sigma]}{2\pi\chi(M)}\right|\cdot\sup_M|f|\,
\end{equation*}
and we see that this quantity is strictly positive for $s$ small enough.

Suppose now that $\sigma$ is a symplectic form on $M$. We have three cases.
\begin{enumerate}
 \item If $M=S^2$, then the quantity in \eqref{quantity} is bounded from below by
\begin{equation*}
1-s\sup_M|\zeta|+s^2\frac{[\sigma]}{4\pi}\cdot\inf_Mf\,.
\end{equation*}
Since $[\sigma]>0$, we have that $\inf f>0$ and we see that such quantity is strictly positive for big $s$.
\item If $M$ has genus larger than $1$, then the quantity in \eqref{quantity} is bounded from above by
\begin{equation*}
1\ +\ s\sup_M|\zeta|\ +\ s^2\frac{[\sigma]}{2\pi\chi(M)}\cdot\inf_Mf\,.
\end{equation*}
Since $\chi(M)<0$ and $\inf f>0$, such quantity is strictly negative for big $s$.
\item If $M=\T^2$, then there exists a closed form $\tau\in\Omega^1(S\T^2)$ such that $\tau(V)=1$ (prove such statement as an exercise). Thus, we get
\begin{equation}
\tau(X_s)\ =\ \tau(X)\ +\ sf\ \geq\ \inf_{SM}\tau(X)\ +\ s\inf_M f\,
\end{equation}
and such quantity is positive provided $\inf f>0$ and $s$ is big enough.
\end{enumerate}
\bigskip

\subsection{Proof of Proposition \ref{prp-st4}}
If $\sigma$ is exact and $k<c_0(g,\sigma)$, we can use Theorem \ref{thm-tai} to find an embedded surface $\Pi\subset M$ with non-empty boundary $\partial \Pi=\{\gamma_i\}$ such that $\mathcal T_k(\Pi)<0$ and the $\gamma_i$'s are periodic orbits of $\Phi^s$ (parametrized by arc-length). Let $(\gamma_i,\dot\gamma_i)$ be the corresponding curve on $SM$ and let $\xi_i$ be the associated invariant measure. Define $\xi_{\partial \Pi}:=\sum_i\xi_i$. What is its rotation vector? Call $\pi_*:H_1(SM;\R)\rightarrow H_1(M;\R)$ the map induced by the projection $\pi$ in homology and observe that
\begin{equation}
\pi_*(\rho(\xi_{\partial \Pi}))\ =\ \sum_i\pi_*(\rho(\xi_i))\ =\ \sum_i\,[\gamma_i]\ =\ [\partial \Pi]\ =\ 0\,.
\end{equation}
\begin{exe}
The map $\pi_*$ is an isomorphism if $M\neq\T^2$. 
\end{exe}
\noindent Thus, we conclude that $\rho(\xi_{\partial \Pi})=0$, if $M\neq\T^2$. Let us compute the action in this case. As before, we use a primitive $\alpha-s\pi^*\zeta$:
\begin{equation}\label{act-tai}
\begin{aligned}
\mathcal S_s(\xi_{\partial \Pi}) = \sum_i \int_{SM}\!\!(1-s\zeta_q(v))\xi_i = \sum_i\int_0^{\ell(\gamma_i)}\!\!\!\!\big(1-s\zeta_{\gamma_i}(\dot\gamma_i)\big)\di t &= \sum_i\ell(\gamma_i)-s\int_0^{\ell(\gamma_i)}\!\!\!\!\gamma_i^*\zeta\\
&= \ell(\partial \Pi)-s\int_{\Pi}\sigma\\
&= s\mathcal T_k(\Pi)\,.
\end{aligned}
\end{equation}
By hypothesis the last quantity is negative and Lemma \ref{lem-ct} tells us that $\Sigma_k$ cannot be of positive contact type. Since by Proposition \ref{prp-st2}, $\Sigma_k$ cannot be of negative contact type either, point \textit{(1)} of the proposition is proved.

We now move to prove point \textit{(2a)} with the aid of a little exercise.
\begin{exe}
We prove a generalization of \eqref{act-tai}, when $M\neq\T^2$. Let $\Pi$ be an embedded surface such that $\partial \Pi$ is a union of periodic orbits and let $\xi_{\partial \Pi}$ be the invariant measure constructed as before. Then,
\begin{equation}\label{eq-gen}
\frac{\mathcal S_s(\xi_{\partial \Pi})}{s}\ =\ \mathcal T_k(\Pi)\ +\ \frac{\mathfrak o(\Pi)\chi(\Pi)[\sigma]}{\chi(M)}\,,
\end{equation}
where $\mathfrak o(\Pi)\in\{+1,-1\}$ record the orientation of $\Pi$. To prove such identity one recalls that $\kappa_{\gamma_i}=sf(\gamma_i)$ and then uses the Gauss-Bonnet theorem (taking into account orientations) to express the integral of the geodesic curvature along $\partial \Pi$. What happens if we consider $M\setminus\Pi$? Do the two expressions for $\mathcal S_s(\xi_{\partial \Pi})$ agree? Remember relation \eqref{tai-inv}.
\end{exe}

The problem with formula \eqref{eq-gen} is that Theorem \ref{thm-tai} does not give any information on the Euler characteristic of $\Pi$. To circumvent this problem we need the following result by Ginzburg \cite{Gin87} (see also \cite[Chapter 7]{AB15}).
\begin{prp}\label{prp-gin}
If $\sup f>\varepsilon$ for some $\varepsilon<0$, there exists a constant $C>0$ such that for every small enough $k$ we can find a simple periodic orbit $\gamma^k_+$ supported on $\{f>\varepsilon\}$ and such that $\ell(\gamma^k_+)\leq\sqrt{2k}C$.

If $\inf f<-\varepsilon$, for some $\varepsilon>0$, there exists $C>0$ such that for every small enough $k$, there exists a simple periodic orbit $\gamma^k_-$ supported on $\{f<-\varepsilon\}$ and such that $\ell(\gamma^k_-)\leq\sqrt{2k} C$.
\end{prp}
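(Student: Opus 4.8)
The plan is to produce $\gamma^k_+$ by localising the Ta\u\i manov functional $\mathcal T_k$ of the previous section near a critical point of $f$ where $f$ does not vanish. Fix a critical point $q_0$ of $f$ lying in $\{f>\varepsilon\}$ and with $b:=|f(q_0)|>0$; such a point exists by the hypothesis $\sup f>\varepsilon$, the relevant case being $\sup f>0$, where one takes $q_0$ at a maximum of $f$ (so $f(q_0)=\sup f>0>\varepsilon$). Choose a small geodesic ball $\bar B=\bar B_\rho(q_0)$ with $\bar B\subset\{f>\varepsilon\}$, with $f$ of constant sign and $|f|\ge c_0>0$ on $\bar B$, and with $\sup_{\partial\bar B}|f|<b$; every orbit we construct will be contained in $\bar B$, hence supported on $\{f>\varepsilon\}$. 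The guiding computation is that, for a geodesic disc $D^c_r$ of radius $r$ centred at $c\in\bar B_{\rho/2}(q_0)$,
\begin{equation*}
\mathcal T_k(D^c_r)\ =\ 2\pi\sqrt{2k}\;r\ -\ \pi f(c)\,r^2\ +\ O(r^3)\,,
\end{equation*}
an inverted parabola in $r$ whose maximum, $2\pi k/f(c)+o(k)$, is attained at the Larmor scale $r_\ast(c):=\sqrt{2k}/f(c)$ and is smallest, over $c$, exactly at $c=q_0$. Thus near $q_0$, at radius $\asymp\sqrt{2k}$, $\mathcal T_k$ has an approximate index‑one critical configuration (a maximum along the radial direction, a minimum along centre and shape variations).

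Concretely, I would fix an area $A_0=A_0(k)\asymp k$ and minimise $\mathcal T_k$ over embedded discs $D\subset\mathring{\bar B}$ with $\op{area}(D)=A_0$. By the compactness of integral $2$‑currents of bounded mass and boundary mass together with the lower semicontinuity of $T\mapsto\sqrt{2k}\,\ell(\partial T)-\int_T\sigma$ (the ingredients behind Proposition~\ref{prp-tai}; see \cite{Tai93,CMP04}), a minimiser $D^k$ exists; it is connected, since $\sqrt a+\sqrt b\ge\sqrt{a+b}$ makes splitting strictly costly, hence a smooth embedded disc by the regularity theory for perimeter under a volume constraint in dimension two; and it lies in the interior of $\bar B$, because any disc of area $A_0\asymp k$ centred near $\partial\bar B$ has $\mathcal T_k$ larger than the $q_0$‑competitor by a definite amount $\asymp A_0\big(f(q_0)-\sup_{\partial\bar B}f\big)\asymp k$. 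The Euler--Lagrange equation of the constrained problem is $\sqrt{2k}\,\kappa_{\partial D^k}=f+\lambda_k$ along $\partial D^k$, with $\lambda_k$ the multiplier of the area constraint, equal to $m'(A_0)$ where $m$ denotes the minimum value. From the parabola above $m'(A_0)\to+\infty$ as $A_0\to0^+$, while $m'(A_0)<0$ once $A_0$ exceeds a fixed multiple of $k/b^2$, so there is $A_0^\ast(k)\asymp k$ with $\lambda_k=0$; for that choice $\partial D^k$ solves $\kappa=f/\sqrt{2k}$, which by \eqref{cur-sur} means that $\gamma^k_+:=\partial D^k$ is the support of a simple closed orbit with energy $k$. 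An equivalent route is a mountain‑pass argument for $\mathcal T_k$ over paths of surfaces in $\bar B$ joining $\emptyset$ to a large disc, the linking being furnished by ``$q_0\in\mathring D$ and $\op{area}(D)$ below a threshold''; alternatively, Ginzburg's original scheme \cite{Gin87} (see also \cite[Ch.~7]{AB15}) rescales space by $r_\ast$ and time by $1/\sqrt{2k}$ near $q_0$, writing \eqref{cur-sur} as an $O(\sqrt{2k})$‑perturbation of the rotation flow $\ddot y=\imath\dot y$ — all of whose orbits are unit circles of period $2\pi$ — and runs a Lyapunov--Schmidt reduction whose bifurcation function on the space of centres is, to leading order, a multiple of $\nabla f$, vanishing at the critical point $q_0$.

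The length bound is then soft: $\gamma^k_+=\partial D^k$ bounds the embedded disc $D^k\subset\bar B$ of area $O(k)$, and its geodesic curvature is $f/\sqrt{2k}\ge c_0/\sqrt{2k}$, so Gauss--Bonnet gives
\begin{equation*}
\frac{c_0}{\sqrt{2k}}\,\ell(\gamma^k_+)\ \le\ \int_{\gamma^k_+}\kappa\ =\ 2\pi\ -\ \int_{D^k}K\mu\ \le\ 2\pi\ +\ \|K\|_\infty\,\op{area}(D^k)\ \le\ C'\,,
\end{equation*}
hence $\ell(\gamma^k_+)\le C\sqrt{2k}$ with $C=C'/c_0$. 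The second assertion is proved identically, localising near a point where $f$ is negative — e.g.\ a minimum of $f$ inside $\{f<-\varepsilon\}$, equivalently applying the first assertion with $f$ replaced by $-f$. I expect the real difficulty to be the analytical step common to all the routes: upgrading the approximate orbit $D^{q_0}_{r_\ast}$ to an \emph{exact} one uniformly as $k\to0$ — controlling the Lagrange multiplier $\lambda_k$ (regularity of constrained minimisers and continuity of $A_0\mapsto m(A_0)$) in the minimisation route, the Palais--Smale and linking structure of $\mathcal T_k$ in the minimax route, or the invertibility estimates in the Lyapunov--Schmidt reduction. Everything after that — that the orbit lies in $\bar B\subset\{f>\varepsilon\}$ and has length $O(\sqrt{2k})$ — follows from the scale $r_\ast\asymp\sqrt{2k}$ and Gauss--Bonnet.
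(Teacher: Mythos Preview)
The paper does not actually prove this proposition: it is stated as ``the following result by Ginzburg \cite{Gin87} (see also \cite[Chapter 7]{AB15})'' and then used as a black box in the proof of Proposition~\ref{prp-st4}. So there is no proof in the paper to compare your proposal against.

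That said, your sketch is a reasonable outline of how one might prove the result, and you correctly identify the key scale $r_*\asymp\sqrt{2k}$ and the Gauss--Bonnet argument for the length bound. Of the three routes you list, the Lyapunov--Schmidt reduction near a non-degenerate critical point of $f$ is essentially Ginzburg's original argument in \cite{Gin87}, and is the one the paper is implicitly pointing to. Your preferred route --- constrained minimisation of $\mathcal T_k$ over discs of prescribed area, then varying the area to kill the Lagrange multiplier --- is a genuine alternative, but as you yourself flag, it is not complete as written: you would need to show that the minimum $m(A_0)$ is a $C^1$ (or at least Lipschitz) function of $A_0$ so that $\lambda_k=m'(A_0)$ makes sense, that the minimiser is unique or at least varies continuously so that the intermediate value argument for $\lambda_k=0$ goes through, and that the barrier keeping the minimiser away from $\partial\bar B$ really works for \emph{all} competitors of area $A_0$, not just round discs with centre near $\partial\bar B$. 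None of these is insurmountable, but none is free either; the Lyapunov--Schmidt route trades these difficulties for a standard implicit-function-theorem estimate, which is why it is the one usually carried out in the literature.
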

\noindent If $f$ is negative at some point, by Proposition \ref{prp-gin}, there exists $\gamma^k_-$ with the properties listed above, for $k$ small. In particular, $\gamma^k_-$ bounds a small disc $D^k_-$. Since the geodesic curvature of $\gamma^k_-$ is very negative, such disc lies in $\mathcal E_-(M)$. When $M\neq\T^2$, we use \eqref{eq-gen} and find
\begin{equation*}
\frac{S_s(\xi_{\partial D^k_-}\big)}{s}\ =\ \mathcal T_k(D^k_-)\ -\ \frac{2}{\chi(M)}[\sigma]\,.
\end{equation*}
By the estimate on the length of $\gamma^k_-$ we get that $|\mathcal T_k(D^k_-)|\leq Ck^2$ (see \eqref{psigma}). Therefore, $S_s(\xi_{\partial D^k_-})$ has the opposite sign of $\chi(M)$ for $k$ small enough. Combining Lemma \ref{lem-ct} and Proposition \ref{prp-st2}, point \textit{(2a)} is proven.

Let us deal now with the case of the $2$-torus. Since $[\sigma]>0$, by Proposition \ref{prp-gin} there exists also $\gamma^k_+$ bounding a disc $D_+^k$. Let $\Pi^k=D^k_-\cup D^k_+$. We claim that the measure $\xi_{\partial \Pi^k}$ has zero rotation vector.
\begin{exe}
Prove the claim by showing that $(\gamma^k_+,\dot\gamma^k_+)$ is freely homotopic in $S\T^2$ to $[S_q\T^2]$, namely the class of a fiber with orientation given by $V$. Analogously, prove that $(\gamma^k_-,\dot\gamma^k_-)$ is freely homotopic to a fiber with the opposite orientation.
\end{exe}
\noindent If $\tau_s$ is a closed stabilizing form, we have that the function $\tau_s(X_s)$ is nowhere zero. Therefore,
\begin{equation*}
0\ \neq\ \int_{S\T^2}\tau_s(X_s)\,\xi_{\partial \Pi^k}\ =\ <[\tau_s],\rho(\xi_{\partial\Pi^k})>\ =\ 0\,,
\end{equation*}
which is a contradiction.

We omit the proof of point \textit{(3)}, for which we refer the reader to \cite{Ben14a}.

\section*{Acknowledgements} We would like to express our gratitude to Ezequiel Maderna and Ludovic Rifford for organizing the research school and for the friendly atmosphere they created while we stayed in Uruguay. We also sincerely thank Marco Mazzucchelli and Alfonso Sorrentino for many engaging discussions during our time at the school.   
\bibliographystyle{amsalpha}
\bibliography{school}
\end{document}